\documentclass[12pt, reqno]{amsart}
\usepackage[margin=1.2in]{geometry}                
\usepackage{graphicx}
\usepackage{amssymb,amscd,amsthm}
\usepackage{epstopdf}
\usepackage{tikz}
\usepackage[utf8]{inputenc}
\usepackage[colorlinks=true, pdfstartview=FitV, linkcolor=blue, citecolor=blue, urlcolor=blue]{hyperref}
\DeclareGraphicsRule{.tif}{png}{.png}{`convert #1 `dirname #1`/`basename #1 .tif`.png}

\numberwithin{equation}{section}
\usepackage{tikz-cd}
\usepackage[nameinlink]{cleveref}
\usepackage{comment}


\def\a{\ensuremath{\alpha}}
\def\ua{\ensuremath{\underline{\alpha}}}
\def\b{\ensuremath{\beta}}
\def\ub{\ensuremath{\underline{\beta}}}

\def\e{\ensuremath{\varepsilon}}

\newcommand{\wa}{\widehat{\alpha}}
\newcommand{\wb}{\widehat{\beta}}
\newcommand{\wrho}{\widehat{\rho}}
\newcommand{\ba}{\mathbf{a}}
\newcommand{\bb}{\mathbf{b}}
\newcommand{\be}{\mathbf{e}}


\newcommand{\fm}{{\mathfrak m}}


\newcommand{\N}{{\mathbb N}}
\renewcommand{\P}{{\mathbb P}}
\newcommand{\Q}{{\mathbb Q}}
\newcommand{\R}{{\mathbb R}}
\newcommand{\Z}{{\mathbb Z}}

\newcommand{\kk}{\mathbb{K}}
\newcommand{\m}{\mathbf{m}}
\newcommand{\contract}{\bullet}


\newcommand{\cI}{{\mathcal I}}

\newcommand{\cL}{{\mathcal L}}
\newcommand{\I}{{\mathcal I}}
\renewcommand{\L}{{\mathcal L}}



\newcommand{\SP}{{\rm SP}}


\def\Ass{\operatorname{Ass}}

\DeclareMathOperator{\reg}{reg}
\DeclareMathOperator{\soc}{end}

\DeclareMathOperator{\Sym}{Sym}

\def\id{{\operatorname{id}}}




\theoremstyle{plain} 

\newtheorem{thm}{Theorem}[section]

\newtheorem*{introthm*}{Theorem}
\newtheorem{question}[thm]{Question}
\newtheorem{cor}[thm]{Corollary}
\newtheorem{lem}[thm]{Lemma}
\newtheorem{prop}[thm]{Proposition}

\newtheorem{conj}[thm]{Conjecture}

\theoremstyle{definition}
\newtheorem{defn}[thm]{Definition}

\newtheorem{ex}[thm]{Example}

\newtheorem{rem}[thm]{Remark}

\theoremstyle{remark}

\newcommand{\uao}{\overleftarrow{\ua}}
\newcommand{\uau}{\overrightarrow{\ua}}
\newcommand{\ao}{\overleftarrow{\alpha}}
\newcommand{\au}{\overrightarrow{\alpha}}
\newcommand{\ubo}{\overleftarrow{\ub}}

\newcommand{\bo}{\overleftarrow{\beta}}

\newcommand{\so}{\overleftarrow{s}}

\renewcommand{\L}{\mathcal{L}}

\title[Duality for asymptotic invariants]{Duality for asymptotic invariants \\of graded families}
\author{Michael DiPasquale}
\address{Department of Mathematics and Statistics, University of South Alabama}
\email{mdipasquale@southalabama.edu}
\author{Th\'{a}i Th\`{a}nh Nguy$\tilde{\text{\^e}}$n}
\address{Department of Mathematics, Tulane University \\
and University of Education, Hue University, 34 Le Loi St., Hue, Viet Nam}
\email{tnguyen11@tulane.edu}
\author{Alexandra Seceleanu}
\address{Department of Mathematics, University of Nebraska-Lincoln}
\email{aseceleanu@unl.edu}

\thanks{\noindent\textbf{Keywords}: duality, Macaulay inverse system, Waldschmidt constant, asymptotic regularity, Seshadri constant, differentially closed filtration}  \thanks{\noindent\textbf{2020 Mathematics Subject Classification}: Primary: 13A02;  13N10, 14N05; Secondary: 13E10, 14C20. }

\begin{document}

\begin{abstract}
The starting point of this paper is a duality for sequences of natural numbers which, 
under mild hypotheses, interchanges subadditive and superadditive sequences and inverts their asymptotic growth constants.

We are motivated to explore this sequence duality since it arises naturally in at least two important algebraic-geometric contexts.  The first context is Macaulay-Matlis duality, where the sequence of initial degrees of the family of symbolic powers of a radical ideal is dual to  the sequence of Castelnuovo-Mumford regularity values of a quotient by ideals generated by powers of linear forms. This philosophy is drawn from an influential paper of Emsalem and Iarrobino. We generalize this duality to differentially closed graded filtrations of ideals.

In a different direction, we establish a duality between the sequence of Castelnuovo-Mumford regularity values of the symbolic powers of certain ideals and a geometrically inspired sequence we term the jet separation sequence. We show that this duality underpins the reciprocity between two important geometric invariants: the multipoint Seshadri constant and the asymptotic regularity of a set of points in projective space.


%
%
%
\end{abstract}

\maketitle
\setcounter{tocdepth}{1}
\tableofcontents

\vspace{-0.5em}
\section{Introduction}

As Michael Atiyah \cite{Atiyah} points out,
``Duality in mathematics is not a theorem, but a principle''. Indeed, forms of duality occur in all branches of mathematics manifesting in ways specific to the subject area.
In this paper we study manifestations of duality which take effect primarily in an algebraic-geometric context. More precisely, our starting point is a notion of duality for sequences of natural numbers. This prompts the question of determining the dual sequences for certain numerical sequences which occur in commutative algebra, for example, the sequence of initial degrees of a graded family of homogeneous ideals, or the sequence of Castelnuovo-Mumford regularity values of a family of ideals. Our techniques allow to relate the asymptotic growth factors of these sequences to those of the dual sequences. We explore this theme in contexts where these asymptotic growth factors carry significant meaning.

At the level of numerical sequences we single out two transformations which act on nondecreasing sequences of integers. Given a sequence $\ua=\{\a_n\}_{n\in\N}$, the transformed sequences are as follows
 \begin{eqnarray*}
 \ao_n=\inf\{d\mid \alpha_d\geq n\},\\
\au_n=\sup\{d\mid \alpha_d\leq n\}.
\end{eqnarray*}
It turns out that these transformation are mutual inverses \cite{Johnson11}. If furthermore $\ua$ is either a subadditive or superadditive sequence (see \Cref{def:subsuper}) then it has a well-defined asymptotic growth factor $\wa=\lim_{n\to \infty}\frac{\a_n}{n}$. The above transformations interchange the classes of subadditive and superadditive sequences. Moreover, under these hypotheses,  we are able to derive the following reciprocation formulas for the respective the asymptotic growth factors in \Cref{lem:DualityProperties}:
\[
\widehat{\au} = \widehat{\a}^{-1} \quad \text{ and} \quad \widehat{\ao} = \widehat{\a}^{-1}.
\]
In fact, we 
generalize the above transformations as well as the reciprocation formulas in \Cref{thm:relativeduality}. The more technical statement of this result is relegated to \cref{s:sequences}.

We apply the duality principle described above to several numerical sequences. Our interest in such sequences is spurred by the study of the family of symbolic powers $\{I^{(d)}\}_{d\in\N}$ of a homogeneous ideal $I$. In the case when $I$ is the defining ideal of an algebraic variety $X$, this family features prominently in algebraic geometry by encoding the set of functions vanishing to higher order on $X$. In commutative algebra, the symbolic powers have been studied most recently by means of comparison with the family of ordinary powers $\{I^n\}_{n\in\N}$; see \cite{ELS, HoHu, MaSchwede, BH10}.

A sequence of interest in this area of study is given by the initial degrees for the symbolic power ideals. Its asymptotic growth factor, dubbed the {\em Waldschmidt constant}, is $\wa(I)=\lim_{d\to \infty}\frac{\alpha(I^{(d)})}{d}$. It is well-known that this sequence is subadditive.  The same is true for any sequence that results from applying a discrete valuation to a graded family of ideals (see \Cref{lem:WaldSub}). Taking this more general perspective leads to considering valuative sequences for any discrete valuation  $\nu$
\[
\beta^{\nu}_n=\beta^{\nu}_n(I)=\sup\{d:\nu(I^{(d)}) < \nu(I^n)\}.
\]
In \Cref{lem:valuationLimit} we apply our duality results to relate the growth factor $\widehat{\beta^{\nu}}$ of this sequence to those of the sequences $\{\nu(I^{(d)})\}_{d\in\N}$ and $\{\nu(I^n)\}_{n\in\N}$. 
This has consequences on  the {\em containment problem} between the ordinary and symbolic powers of $I$.  Building on \cite{DFMS, DD21}, we show that there exists a valuation $\nu$ for which the asymptotic growth factor of $\beta^{\nu}$ recovers the {\em asymptotic resurgence} of \cite{GHV13}.
%

In \cref{sec:InverseSystems} we study the dual of a sequence closely related to the initial degree sequence of the family of symbolic powers. In this pursuit, we are led to consider a notion of inverse systems which dates back to Macaulay \cite{Macaulay}. Emsalem and Iarrobino determined in an influential paper \cite{EI95} the inverse system for the symbolic powers of a radical ideal. We generalize their results by introducing a new notion of differentially closed graded filtrations of ideals for which the inverse systems behave particularly well.  Examples of differentially closed graded filtrations include many families of powers of a homogeneous ideal (differential, ordinary, symbolic, and integral Frobenius powers) and any family obtained by intersecting these.

\begin{introthm*}[\Cref{thm:differentiallyclosed} and \Cref{thm:InverseSystemDualityGeneral}]
Suppose $\I=\{I_n\}_{n\in\N}$ is a differentially closed graded filtration of proper ideals in $R=\kk[x_0,\ldots,x_N]$.  Let  $\mathcal{D}=\bigoplus_{i\geq 0} {\rm Hom}(R_i,\kk)$, equipped with the structure of a divided power algebra.  For each $s\in\N$ put 
\[
\L^s(\cI):=\bigoplus_{d\ge s+1} \left(I_{d-s}^{\perp}\right)_d \subseteq \mathcal{D}.
\]
Then $\L^s(\cI)$ is an ideal of $\mathcal{D}$ for each $s\in\N$.  The sequence $\alpha_n = \alpha(I_n)$ is subadditive and $\beta_s=\sup\{d:(\mathcal{D}/\L^s(\I))_d\neq 0\}$ is superadditive.  Assuming that the sequence $\{\alpha_n-n\}_{n\in\N}$ is not bounded above, we have
\[
\wb=\frac{\wa}{\wa-1} \mbox{  and  } \wa=\frac{\wb}{\wb-1}.
\]
\end{introthm*}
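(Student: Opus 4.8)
The theorem splits naturally into three parts: (1) that $\L^s(\cI)$ is an ideal of $\mathcal{D}$; (2) the subadditivity of $\{\alpha_n\}$ and superadditivity of $\{\beta_s\}$; and (3) the reciprocity $\wb = \wa/(\wa-1)$. Part (1) is the content of \Cref{thm:differentiallyclosed}, so the plan is to invoke it: the differentially closed hypothesis on $\cI$ is precisely engineered so that the perpendicular spaces $(I_{d-s}^\perp)_d$ assemble into an ideal under the divided-power (contraction) action of $R$ on $\mathcal{D}$. I would spend a sentence recalling why: applying a divided-power derivation lowers degree by one and, because $I_{d-s} \supseteq I_{d-s+1}$ and $\cI$ is differentially closed, the contraction of $(I_{d-s}^\perp)_d$ lands in $(I_{d-s-1}^\perp)_{d-1}$, which is the degree-$(d-1)$ piece of $\L^s(\cI)$ (using that $d-1 \ge (s+1)$ unless $d = s+1$, in which case the contraction is zero since $I_0 = R$ forces $(I_0^\perp)_0=0$... one must check the boundary index carefully).

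**Subadditivity and superadditivity.** The subadditivity of $\alpha_n = \alpha(I_n)$ is \Cref{lem:WaldSub} applied to the valuation given by order of vanishing (initial degree), since a graded filtration satisfies $I_m \cdot I_n \subseteq I_{m+n}$. For $\beta_s = \sup\{d : (\mathcal{D}/\L^s(\cI))_d \neq 0\}$, I would first rewrite this more concretely. Since $(\L^s(\cI))_d = (I_{d-s}^\perp)_d$ for $d \ge s+1$ and is the whole of $\mathcal{D}_d$ for $d \le s$ (as $I_{d-s} = I_{\le 0} = R$ gives $I_{d-s}^\perp = 0$, hence nothing is removed — wait, the sum only ranges over $d \ge s+1$, so for $d \le s$ we have $(\L^s(\cI))_d = 0$ and $(\mathcal{D}/\L^s(\cI))_d = \mathcal{D}_d \ne 0$). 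So the relevant condition is: $(\mathcal{D}/\L^s(\cI))_d \ne 0 \iff (I_{d-s}^\perp)_d \subsetneq \mathcal{D}_d \iff (I_{d-s})_d \ne 0 \iff d \ge \alpha(I_{d-s})$. Thus $\beta_s = \sup\{d : \alpha(I_{d-s}) \le d\} = \sup\{d : \alpha(I_{d-s}) \le d\}$. Reindexing with $n = d-s$, this becomes $\beta_s = s + \sup\{n : \alpha(I_n) - n \le s\}$, i.e. $\beta_s = s + \overrightarrow{(\alpha - \mathrm{id})}_s$ in the notation of the sequence-duality section — the shifted sequence $\alpha_n - n$ is subadditive (difference of subadditive and linear), so its $\overrightarrow{\ \cdot\ }$ transform is superadditive by \Cref{lem:DualityProperties}, and adding $s$ preserves superadditivity. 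This is the step I expect to require the most care: getting the off-by-one/boundary indices exactly right in the definition of $\beta_s$ and matching it cleanly to one of the sequence transforms $\overleftarrow{(\cdot)}$ or $\overrightarrow{(\cdot)}$ so that \Cref{thm:relativeduality}/\Cref{lem:DualityProperties} applies verbatim.

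**Reciprocity of growth constants.** Once $\beta_s = s + \overrightarrow{(\alpha_\bullet - \bullet)}_s$ is established, the asymptotic constant follows from the sequence-duality machinery. Write $\gamma_n = \alpha_n - n$; since $\{\alpha_n - n\}$ is unbounded above (the standing hypothesis), $\gamma$ is a genuine nondecreasing-up-to-infinity subadditive sequence with $\widehat{\gamma} = \wa - 1 > 0$, so \Cref{lem:DualityProperties} gives $\widehat{\overrightarrow{\gamma}} = \widehat{\gamma}^{-1} = 1/(\wa - 1)$. Then
\[
\wb = \lim_{s\to\infty} \frac{\beta_s}{s} = \lim_{s\to\infty} \frac{s + \overrightarrow{\gamma}_s}{s} = 1 + \frac{1}{\wa - 1} = \frac{\wa}{\wa - 1}.
\]
The symmetric formula $\wa = \wb/(\wb - 1)$ is the algebraic inverse of this relation (the map $t \mapsto t/(t-1)$ is an involution on $(1,\infty)$), so it follows formally, though one should note it also follows directly by running the duality in the other direction. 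The main obstacle, as flagged, is purely bookkeeping: translating the Macaulay-dual construction $\L^s(\cI)$ into the language of the abstract sequence transforms without an index error, and confirming that the unboundedness hypothesis is exactly what is needed to keep $\widehat{\gamma}$ finite and positive so that the reciprocal is well-defined.
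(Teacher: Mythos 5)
Your proposal follows the paper's own proof essentially step for step: subadditivity of $\alpha_n$ from \Cref{lem:WaldSub}, the key rewriting $\beta_s = s + \overrightarrow{\gamma}_s$ for $\gamma_n = \alpha_n - n$, and then \Cref{lem:DualityProperties} to conclude $\widehat{\overrightarrow{\gamma}} = 1/(\wa-1)$ and hence $\wb = \wa/(\wa-1)$. One small correction to the parenthetical aside on why $\L^s(\cI)$ is an ideal: you describe contraction by $R$ (which lowers degree), but the $R$-module structure on $\mathcal{D}$ is automatic and is not the point --- being an \emph{ideal of $\mathcal{D}$} means closure under multiplication in the divided power algebra, which raises degree, and the actual proof of \Cref{thm:differentiallyclosed} uses the product formula (\Cref{lem:prodrule}) to reduce $F \contract (Y_j^{[k]} g) = 0$ to $D_{k\be_j}(F) \contract g = 0$, which is exactly where differential closedness enters; since you invoke the theorem as a black box anyway, this does not affect the validity of your argument.
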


Several forms of algebraic duality manifest themselves in the setup above. The degreewise vector space duality between $R$ and $\mathcal{D}$ manifests itself via apolarity (orthogonality). The inverse systems considered in  \cref{sec:InverseSystems} are a form of Matlis duality. Finally the projective duality between points in $p=(p_0:\cdots:p_N)\in \P^N$ and linear forms $L_p=p_0x_0+\cdots+p_nx_n\in R$ yields a celebrated description of $\L^s(\cI)$  when $\mathcal{I}$ is the family of symbolic powers for the defining ideal of a projective variety; see \Cref{ex:LsForSymbolicPowers}.

In contrast to the above setting where the algebraic duality is more evident while the numerical duality of asymptotic invariants is more elusive, we study a different setup where duality of asymptotic invariants has been observed before (see \cite[\S 5.1]{Laz04}), but the underpinning reasons have not previously been discovered. 

\begin{introthm*}[\Cref{prop:superadd} and \Cref{thm:areg}]
Let $I$ be the defining ideal of a set $X$ of $r\geq 2$  points in $\P^N$. Set $s_d=s(X,d-1)$ to be the jet separation sequence of $X$ (\Cref{def:jets}) and $r_k=\reg(I(X)^{(k+1)})$ the sequence of Castelnuovo-Mumford regularites for the symbolic powers of $I(X)$. There is a duality between these sequences
\[
 s=\overrightarrow{r} \text{ and } r=\so.
\]
This duality underlies the following identity relating the Seshadri constant $\e(X)$ (see \Cref{def:Seshadri}) of $X$ and the asymptotic regularity of $X$
\[
\e(X)=\lim_{d\to\infty}\frac{s_d}{d}= \left(\lim_{k\to\infty}\frac{I(X)^{(k)}}{k}\right)^{-1}= :\widehat{\reg}(I(X))^{-1}.
\]
\end{introthm*}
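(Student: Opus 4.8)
The plan is to reduce both sequences to Hilbert–function data of the fat point schemes $(k)X$ and to extract the duality from a single clean equivalence. Write $I(X)^{(k)}=\bigcap_{p\in X}\mathfrak p_p^{k}$ (legitimate since each $\mathfrak p_p$ is generated by linear forms, so $\mathfrak p_p^{(k)}=\mathfrak p_p^{k}$), set $H(k,e)=\dim_{\kk}(R/I(X)^{(k)})_e$, and let $\delta_k=\deg((k)X)=r\binom{N+k-1}{N}$. Since $I(X)^{(k)}$ is saturated and $(k)X$ is zero-dimensional, $R/I(X)^{(k)}$ is Cohen--Macaulay of dimension one, so $H(k,\cdot)$ is nondecreasing and first attains its eventual value $\delta_k$ in degree $e_0(k):=\min\{e:H(k,e)=\delta_k\}$. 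The local-cohomology description of regularity for a one-dimensional Cohen--Macaulay quotient then gives $\reg(I(X)^{(k)})=e_0(k)+1$, hence
\[
\reg(I(X)^{(k)})\le d\iff H(k,d-1)=\delta_k .
\]
On the jet side, unwinding \Cref{def:jets}: degree-$m$ forms separate $t$-jets at every point of $X$ precisely when $R_m\to\bigoplus_{p\in X}\mathcal{O}_{\P^N,p}/\mathfrak m_p^{t+1}$ is surjective, and the kernel of this evaluation map is $(I(X)^{(t+1)})_m$; so this happens exactly when $H(t+1,m)=\delta_{t+1}$. Truncating jets makes this property downward closed in $t$, so $s(X,m)=\max\{t:H(t+1,m)=\delta_{t+1}\}$, whence $s_d\ge n\iff H(n+1,d-1)=\delta_{n+1}$.

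Combining the two equivalences yields the key relation $s_d\ge n\iff r_n\le d$, valid for all $n$ and $d$. This is precisely the assertion that $s$ and $r$ are conjugate: indeed $\so_n=\inf\{d:s_d\ge n\}=\inf\{d:r_n\le d\}=r_n$ and $\overrightarrow r_d=\sup\{m:r_m\le d\}=\sup\{m:s_d\ge m\}=s_d$. For the asymptotic statement I would then prove that $s$ is superadditive: if degree-$m$ forms separate $a$-jets and degree-$m'$ forms separate $b$-jets at $X$, then degree-$(m+m')$ forms separate $(a+b)$-jets at $X$, via the standard multiplication-of-sections argument (surjectivity of $\mathfrak m_p^{i}/\mathfrak m_p^{i+1}\otimes\mathfrak m_p^{j}/\mathfrak m_p^{j+1}\to\mathfrak m_p^{i+j}/\mathfrak m_p^{i+j+1}$ carried out simultaneously at all points of $X$); hence $s_{d+d'}\ge s_d+s_{d'}$, which is the content of \Cref{prop:superadd}. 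Applying \Cref{lem:DualityProperties} to the superadditive sequence $s$ — its hypotheses hold since $s_d\to\infty$, and one checks $0<\widehat s\le 1$ (separating $t$-jets at $r\ge 1$ points in degree $d-1$ forces $t\le d-1$) — gives $\widehat r=\widehat{\so}=\widehat s^{-1}$; moreover $r=\so$ is then subadditive, so $\lim_{k}r_k/k$ exists.

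It remains to identify the two growth factors, which is \Cref{thm:areg}. Reindexing, $\widehat r=\lim_{k}\reg(I(X)^{(k+1)})/k=\lim_{k}\reg(I(X)^{(k)})/k=:\widehat{\reg}(I(X))$, so the asymptotic regularity exists. On the other side, $\lim_{d}s_d/d=\lim_{m}s(X,m)/m$, which equals the multipoint Seshadri constant $\e(X)$ by the characterization of Seshadri constants via asymptotic jet separation (\Cref{def:Seshadri}; cf.\ \cite[\S 5.1]{Laz04}); thus $\widehat s=\e(X)$. Substituting into $\widehat s=\widehat r^{-1}$ gives $\e(X)=\lim_{d}s_d/d=\widehat{\reg}(I(X))^{-1}$, as claimed.

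The main obstacle I anticipate is the bridge of the first paragraph: pinning down, exactly and index by index, how $\reg(I(X)^{(k)})$, the degree in which the Hilbert function of $(k)X$ stabilizes, and the jet-separation degree are interlocked — in particular that these identifications hold on the nose rather than only up to bounded error, which is what makes $s$ and $r$ genuinely conjugate rather than merely asymptotically comparable. Secondary care is needed for superadditivity of $s$ (the multiplication-of-sections argument carried out at several points at once) and for checking the nondegeneracy hypotheses of \Cref{lem:DualityProperties}, which is where the assumption $r\ge 2$ enters.
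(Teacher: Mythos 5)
Your argument is substantially correct and makes two genuinely different choices from the paper, plus has one gap at the end. First, for the conjugacy relation $s_d\geq n\iff r_n\leq d$, you go through Hilbert functions of the fat-point schemes and the Cohen--Macaulay structure of $R/I(X)^{(k)}$; the paper instead establishes (\Cref{prop:jet}) that the jet-separation index of \Cref{def:jets} agrees with the regularity-defined index of \Cref{def:genjets} via the vanishing of $H^1(\P^N,\mathcal{I}^{(k+1)}\otimes\mathcal{O}(d))$, after which $s=\overrightarrow{r}$ is essentially tautological. The routes are equivalent in content, but yours avoids sheaf cohomology (and so does not need $N\geq 2$ for this step). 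Second, you prove superadditivity of $s$ directly by a multiplication-of-sections argument, while the paper proves subadditivity of $r$ via the Cohen--Macaulay lemma (\Cref{lem:CM}) and then deduces superadditivity of $s$ from \Cref{lem:DualityProperties}(3). These are dual approaches; yours is a good bit more geometric, and one genuine advantage of the paper's route is that subadditivity of $r$ holds for arbitrary aspCM ideals rather than just point schemes.

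The gap is in the very last step. You treat the identification $\lim_{d}s(X,d)/d=\e(X)$ for \emph{multipoint} Seshadri constants as a known characterization and cite Lazarsfeld, but \cite[Theorem~5.1.17]{Laz04} only covers the single-point case. The multipoint version is one of the main contributions of this section of the paper: it requires first establishing the multipoint Seshadri criterion (\Cref{prop:ample}, expressing $\e(X)$ via amplitude of $\Q$-divisors $qH-pE$ on the blowup) and then running the jet-separation argument (\Cref{prop:e}), where the regularity/jet duality from \Cref{prop:superadd} is used again to avoid subtleties about existence of the limit. Without a proof of the multipoint version your argument reduces the identity $\widehat{s}=\widehat{r}^{-1}$ correctly but does not close the loop to $\e(X)$. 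A secondary issue you flag yourself but do not fully resolve is the domain of validity for \Cref{lem:DualityProperties}: $s_d$ is not a natural number for small $d$ (indeed $s(X,0)=-\infty$ for $r\geq 2$), so the lemma should be applied to the truncated sequence $\{s_d\}_{d\geq r_1}$, as the paper does in \Cref{prop:superadd}(2), together with \Cref{lem:subseq} to transfer the asymptotic growth factor.
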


In \cref{s:Nagata} we take the opportunity to revisit the celebrated conjectures of Nagata and Iarrobino regarding linear systems of polynomials vanishing to higher order at a finite set of points in projective space. We give homological reformulations for these conjectures based on the results discussed above. This leads into further open problems presented in the final \cref{s:open}.

\section{Duality for numerical sequences}\label{s:1}
\label{s:sequences}
In this paper the set $\N$ of natural numbers does not include 0.

The purpose of this section is to study duality of sequences of natural numbers. To define this duality we first generalize two operations on sequences introduced in \cite{Johnson11}. These operations are discrete analogues for the notion of  pseudo-inverse functions described in \cite{Osterdal06}.

\begin{defn}
\label{def:overunder}
Given sequences $\ua=\{\alpha_d\}_{d\in \N}$ and $\ub=\{\b_n\}_{n\in \N}$ of real numbers 
define new sequences $\uao^\b, \uau^\b$ 
associated to the pair $\ua, \ub$ in the following manner, where we allow that $\ao^\b_n, \au^\b_n\in \N\cup\{-\infty, \infty\}$ (by convention $\sup(\emptyset)=-\infty, \inf(\emptyset)=\infty$):
 \begin{eqnarray*}
 \ao^\b_n=\inf\{d \in \N \mid \alpha_d\geq \b_n\},\\
\au^\b_n=\sup\{d \in \N \mid \alpha_d\leq \b_n\}.
\end{eqnarray*}
Setting $\id_n=n$ yields the two particularly important  sequences previously studied in \cite{Johnson11}, for which we use the shortened notation $\uao^\id=\uao$ and $\uau^\id=\uau$. They are given by
 \begin{eqnarray*}
 \ao_n=\inf\{d\mid \alpha_d\geq n\},\\
\au_n=\sup\{d\mid \alpha_d\leq n\}.
\end{eqnarray*}
\end{defn}

In the remainder of the paper we will be interested  in situations when the sequences $\ua, \ub$ consist of natural numbers and for all $n\in \N$ they yield $\ao^\b_n\in \N$ and $\au^\b_n\in \N$. 
\begin{ex}
If $\ua$ is a sequence of natural numbers there are identities $\overrightarrow{\underline{\id}}^\a=\overleftarrow{\underline{\id}}^\a=\ua$.  
\end{ex}

We shall be interested in applying the transformations in \Cref{def:overunder} to subadditive and superadditive sequences respectively. We now recall these notions.

\begin{defn}
\label{def:subsuper}
A sequence of real  numbers $\ua=\{\alpha_n\}_{n\geq n_0}$ for some $n_0\in\N$ is called 
\begin{itemize}
\item {\bf subadditive} if it satisfies $\alpha_{i+j} \leq \alpha_i+\alpha_j$ for all $i,j\geq n_0$.
\item {\bf superadditive} if it satisfies $\alpha_i+\alpha_j\leq \alpha_{i+j}$ for all $i,j\geq n_0$.
\end{itemize}
\end{defn}

 Fekete's lemma \cite{Fekete} guarantees the existence of  $\wa=\lim_{n\to\infty} \frac{\a_n}{n}$ for any subadditive  or superadditive sequence of real numbers $\ua=\{\a_n\}_{n\in \N}$, allowing for the value of the limit to be $-\infty$ in the subadditive case and $\infty$ in the superadditive case respectively.  In the subbaditive case, the value of the limit coincides with $\inf_{n\in \N} \frac{\a_n}{n}$ and in the superadditive case with $\sup_{n\in \N} \frac{\a_n}{n}$. 
 
 \begin{defn}
 Given a subadditive  or superadditive sequence of real numbers $\ua=\{\a_n\}_{n\geq n_0}$,  the {\bf asymptotic growth factor} 
 of $\ua$ is the value of the limit 
 \[
 \wa=\lim_{n\to\infty} \frac{\a_n}{n}\in \R\cup\{-\infty, \infty\}.
 \]
 If additionally $\ua$ consists of natural numbers, we have  $\wa\in \R_{\geq 0} \cup\{ \infty\}$.  
 \end{defn}

 We now arrive at our first main result. It shows, how the transformations in \Cref{def:overunder} interact with the classes of subadditive and superadditive sequences and how they transform the respective asymptotic growth factors. In the statement we adopt the conventions that $r/0=\infty$ and $r/\infty=0$ for $r\in \R_{>0}$ and $\infty/0=\infty$, $0/\infty=0$.
 
 \begin{thm}
\label{thm:relativeduality}
Consider sequences $\ua, \ub$ of positive real numbers such that $\ua$ is subadditive  and $\ub$ is superadditive and for each $n\in\N$ we have $\au^\b_n\in\N$ and $\bo^\a_n\in\N$. Then
\begin{enumerate}
\item  the sequence $\uau^\b$ is superadditive and satisfies $\widehat{\au^\b}=\widehat{\b}/\widehat{\a}$.
\item the sequence $\ubo^\a$ is subadditive and satisfies $\widehat{\bo^\a}=\widehat{\a}/\widehat{\b}$.
\end{enumerate}
\end{thm}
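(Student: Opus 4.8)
The plan is to prove statement (1) directly and then obtain (2) by a symmetry argument, exchanging the roles of the two transformations. For (1), the first task is to show that $\uau^\b$ is superadditive. Fix $m,n\in\N$ and set $d=\au^\b_m$, $e=\au^\b_n$; by definition these are the largest indices with $\alpha_d\le\beta_m$ and $\alpha_e\le\beta_n$. Using subadditivity of $\ua$ and superadditivity of $\ub$ one gets $\alpha_{d+e}\le\alpha_d+\alpha_e\le\beta_m+\beta_n\le\beta_{m+n}$, which shows $d+e$ lies in the set defining $\au^\b_{m+n}$, hence $\au^\b_{m+n}\ge d+e=\au^\b_m+\au^\b_n$. (One should check the edge behavior where a sup is attained only in the limit or the relevant set is unbounded, but under the hypothesis $\au^\b_n\in\N$ these sets are nonempty with finite supremum, so the argument is clean.) Because $\uau^\b$ is a superadditive sequence of natural numbers, Fekete's lemma gives the existence of $\widehat{\au^\b}=\sup_n \frac{\au^\b_n}{n}\in\R_{\ge0}\cup\{\infty\}$.

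The heart of the proof is the limit identity $\widehat{\au^\b}=\widehat{\b}/\widehat{\a}$. I would extract this from a pair of squeeze inequalities relating $\au^\b_n$ to the growth of $\ua$ and $\ub$. The key elementary observation is the near-inverse relationship: $\au^\b_n = d$ means $\alpha_d\le\beta_n<\alpha_{d+1}$ (when $\ua$ is, say, eventually strictly increasing; in general $\alpha_{d+1}>\beta_n$ and $\alpha_d\le \beta_n$ still pin $d$ between the two ``times'' at which $\ua$ crosses the level $\beta_n$). Dividing through, $\frac{\alpha_d}{d}\cdot\frac{d}{n}\le\frac{\beta_n}{n}$ and $\frac{\beta_n}{n}<\frac{\alpha_{d+1}}{d+1}\cdot\frac{d+1}{n}$. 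As $n\to\infty$ we have $d=\au^\b_n\to\infty$ (this needs $\widehat{\a}<\infty$, or else one handles the degenerate case separately), so $\frac{\alpha_d}{d}\to\widehat{\a}$ and $\frac{\beta_n}{n}\to\widehat{\b}$; taking limits in the two inequalities squeezes $\frac{d}{n}\to\widehat{\b}/\widehat{\a}$, i.e. $\widehat{\au^\b}=\widehat{\b}/\widehat{\a}$. The bookkeeping for the boundary conventions ($\widehat{\a}=0$ forcing the ratio to be $\infty$, $\widehat{\b}=\infty$ likewise, and the symmetric cases) is where I'd have to argue case by case using the stated conventions $r/0=\infty$, $r/\infty=0$, etc., together with the monotonicity of $\ua,\ub$.

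For part (2), the cleanest route is to observe a duality between the two operations: $\bo^\a$ is, up to the near-inverse phenomenon, the transformation $\uau$ applied in the opposite direction with the roles of $\ua$ and $\ub$ swapped. Concretely, $\bo^\a_n=\inf\{d:\beta_d\ge\alpha_n\}$, and since $\ub$ is superadditive (playing the role that $\ua$ played before) while $\ua$ is subadditive, the same two-sided estimate — now $\beta_{d-1}<\alpha_n\le\beta_d$ — yields both subadditivity of $\ubo^\a$ (by the mirror of the Fekete-style inequality, using $\beta_{i+j}\ge\beta_i+\beta_j$ and $\alpha_{i+j}\le\alpha_i+\alpha_j$) and the growth identity $\widehat{\bo^\a}=\widehat{\a}/\widehat{\b}$ by the same squeeze. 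I expect the main obstacle to be not the core argument but the careful treatment of the degenerate and boundary cases — when $\widehat{\a}$ or $\widehat{\b}$ is $0$ or $\infty$, when the defining sets are finite versus infinite, and ensuring $\au^\b_n\to\infty$ — so that the reciprocation formula holds in the full generality claimed, matching the conventions fixed just before the theorem.
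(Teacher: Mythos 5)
Your proposal is correct in substance, and the superadditivity argument in step one is identical to the paper's. Where you diverge is in proving the limit identity $\widehat{\au^\b}=\widehat{\b}/\widehat{\a}$. You use a sandwich: from $d=\au^\b_n$ you extract the two-sided pinch $\alpha_d\le\beta_n<\alpha_{d+1}$ (which is valid without any monotonicity hypothesis, since $d$ is the finite supremum of the defining set), then divide and pass to the limit, getting $\limsup\frac{d}{n}\le\wb/\wa$ from the first inequality and $\liminf\frac{d}{n}\ge\wb/\wa$ from the second. The paper instead works entirely with the Fekete sup/inf characterizations: it writes $\widehat{\au^\b}=\sup\{d/n:\alpha_d\le\beta_n\}$, gets the upper bound $\wb/\wa\ge\widehat{\au^\b}$ directly from $\wb/\wa=\sup\{\frac{\beta_n}{\alpha_d}\cdot\frac{d}{n}\}$ together with $\frac{\beta_n}{\alpha_d}\ge1$ when $\alpha_d\le\beta_n$, and proves the lower bound by fixing any ratio $d/n<\wb/\wa$ and scaling by $t\gg0$ so that $\alpha_{dt}<\beta_{nt}$. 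Your squeeze is arguably more elementary and transparent; the paper's sup/inf route avoids needing to track $\au^\b_n\to\infty$ and folds the degenerate cases $\wa=0$, $\wb=\infty$ into a short uniform argument (the inequality $\frac{d}{n}\wa<\wb$ holds for all $d,n$ in those cases, so the scaling argument gives $\widehat{\au^\b}=\infty$ immediately).

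Two small inaccuracies worth flagging. First, you say ``$d=\au^\b_n\to\infty$ (this needs $\widehat{\a}<\infty$ ...)''; in fact this divergence is automatic once superadditivity is established, since $\au^\b_n\ge n\cdot\au^\b_1\ge n$ because $\au^\b_1\in\N$ implies $\au^\b_1\ge1$. No hypothesis on $\wa$ is needed for this. Second, you worry that the pinch $\alpha_d\le\beta_n<\alpha_{d+1}$ might require $\ua$ to be eventually strictly increasing; it does not. By hypothesis $\au^\b_n\in\N$, so the defining set is nonempty and bounded above, its supremum $d$ is attained (giving $\alpha_d\le\beta_n$), and by maximality $\alpha_{d'}>\beta_n$ for every $d'>d$, in particular $\alpha_{d+1}>\beta_n$. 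So the pinch is unconditional, which actually makes your argument cleaner than you present it. The one genuine piece of unfinished work in your proposal is the case analysis for $\wa\in\{0\}$ and $\wb=\infty$, which you acknowledge; the paper handles these explicitly, and since the theorem is stated with those conventions in force, any complete proof must too.
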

\begin{proof}
(1) Let $m,n\in\N$ and set $d=\au^\b_m$ and $d'=\au^\b_n$. By definition we have $\a_d\leq \b_m$ and $\a_{d'}\leq \b_n$ whence we deduce using subadditivity of $\ua$ and superadditivity of $\ub$
\[
\a_{d+d'}\leq \a_d+\a_{d'}\leq \b_m+\b_n\leq \b_{m+n}.
\]
It follows that $\au^\b_{m+n}\geq d+d'=\au^\b_m+\au^\b_n$, establishing superadditivity for $\uau^\b$.

Assume first that $\wa\neq 0$ and $\wb\in \R$ (i.e., $\wb\neq \infty$). 
Since $\uau^\b$ is superadditive, we have 
\begin{equation}
\label{eq:sup}
\widehat{\au^\b}=\sup_{n\in \N}\left\{ \frac{\au^\b_n}{n} \right\}=\sup \left \{\frac{d}{n} \mid \a_d\leq \b_n \right \}.
\end{equation}
The identities $\wa=\lim\limits_{d\to\infty}\left\{ \frac{\a_d}{d} \right\}=\inf\limits_{d\in\N}\left\{ \frac{\a_d}{d} \right\}$ and $\wb=\lim\limits_{n\to \infty}\left\{ \frac{\b_n}{n} \right\}=\sup\limits_{n\in\N}\left\{ \frac{\b_n}{n} \right\}$ yield 
\[
\frac{\wb}{\wa}=\sup\limits_{n,d\in \N}\left\{ \frac{\b_n}{\a_d}\cdot \frac{d}{n} \right\},
\]
whence we deduce that $\frac{\wb}{\wa}\geq \frac{\b_n}{\a_d}\cdot \frac{d}{n}\geq  \frac{d}{n}$ whenever $\a_d\leq \b_n$.
Combining this with \eqref{eq:sup} we arrive to the conclusion $\frac{\wb}{\wa}\geq \widehat{\au^\b}$.

To establish the converse inequality it suffices to show that for all $n,d\in\N$ with $\frac{d}{n}<\frac{\wb}{\wa}$ we have  $\frac{d}{n}\leq  \widehat{\au^\b}$. Assuming that $\frac{d}{n}<\frac{\wb}{\wa}$ or equivalently that $\frac{d}{n} \cdot \wa<{\wb}$  and writing $\frac{d}{n} \cdot \wa=\lim\limits_{t\to \infty}\frac{d}{n}\cdot\frac{\a_{dt}}{dt}$ and $\wb=\lim\limits_{t\to \infty}\frac{\b_{nt}}{nt}$ allows to conclude that for $t\gg 0$ we have 
\begin{equation}
\label{eq:dtvsnt}
\frac{d}{n}\cdot\frac{\a_{dt}}{dt}<\frac{\b_{nt}}{nt}, \text{ that is, } \a_{dt}<\b_{nt} \text{ for } t\gg0.
\end{equation}
In view of the above inequality, \eqref{eq:sup} yields $\widehat{\au^\b}\geq \frac{dt}{nt}$ for $ t\gg0$, which leads to the desired conclusion  $\widehat{\au^\b}\geq \frac{d}{n}$. This concludes the proof of the claim  $\widehat{\au^\b}=\widehat{\b}/\widehat{\a}$.

Now we treat the cases $\wa=0$ and $\wb=\infty$. In both of these situations our convention yields $\wb/\wa=\infty$. For arbitrary $d,n\in \N$ the inequality $\frac{d}{n} \cdot \wa<{\wb}$ is satisfied, therefore the same argument as in \eqref{eq:dtvsnt} yields $\widehat{\au^\b}\geq \frac{d}{n}$ for all $d,n\in \N$. It follows that $\widehat{\au^\b}=\infty=\wb/\wa$, as claimed.

%
%

(2) The second part is entirely analogous to the first.
\end{proof}

Specializing the previous theorem to the case when one of the sequences involved is $\underline{\id}$ allows for a result that better portrays the duality of the transformations $\uau$ and $\uao$.  To obtain a true duality theory one must restrict to the case when the input sequence $\ua$ is  a sequence of natural numbers unbounded above. Specifically, the next result, which constituted the starting point of our project, shows that the transformations $\uau, \uao$ are mutual inverses and  interchange the classes of subadditive and superadditive sequences, that, when restricted to these classes of sequences, the transformations $\uau, \uao$ reciprocate the respective asymptotic growth factors.

 For the next result we utilize the convention that $0^{-1}=\infty$ and $\infty^{-1}=0$.

\begin{thm}
\label{lem:DualityProperties}
Let $\ua$ be a nondecreasing sequence of natural numbers. 
\begin{enumerate}
\item There are identities
$\overrightarrow{\uao}=\ua \text{ and } \overleftarrow{\uau}=\ua$.
\item 
If $\ua$ is increasing, then there are identities
$\overrightarrow{\uau}=\ua$  and  $\overleftarrow{\uao}=\ua$.

\item  If $\ua$ is subadditive  then $\{\au_n\}_{n\geq \a_1}$ is nondecreasing superadditive with  $\widehat{\au} = \widehat{\a}^{-1}$.
\item  If $\ua$ is superadditive, then $\uao$ is nondecreasing subadditive with $\widehat{\ao} =\widehat{\a}^{-1}$.
\end{enumerate}
\end{thm}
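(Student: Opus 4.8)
The plan is to obtain parts (3) and (4) as corollaries of \Cref{thm:relativeduality} by specializing one of its two input sequences to $\underline{\id}$, and to settle parts (1) and (2) by directly unwinding \Cref{def:overunder}.

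For part (1) the key is the pair of ``adjunction'' equivalences valid for any nondecreasing sequence $\ua$ of natural numbers and all $m,n\in\N$:
\[
\ao_n\le m \iff \a_m\ge n \qquad\text{and}\qquad \au_n\ge m \iff \a_m\le n .
\]
Each follows in a line from monotonicity of $\ua$ (for instance $\ao_n\le m$ says some index $d\le m$ satisfies $\a_d\ge n$, equivalently $\a_m\ge n$). Plugging these into \Cref{def:overunder} collapses the suprema and infima: $\overrightarrow{\uao}_m=\sup\{n:\ao_n\le m\}=\sup\{n:n\le\a_m\}=\a_m$ and $\overleftarrow{\uau}_m=\inf\{n:\au_n\ge m\}=\inf\{n:n\ge\a_m\}=\a_m$, which is (1). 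Part (2) is the same style of computation for the iterated transforms $\overrightarrow{\uau}$ and $\overleftarrow{\uao}$; there the hypothesis that $\ua$ is strictly increasing is exactly what locates the relevant extrema and reconciles the indexing built into the transforms — in particular the indices $n<\a_1$, on which $\au_n=-\infty$, drop out when one forms $\overrightarrow{\ua}$. (A strictly increasing sequence of natural numbers is automatically unbounded, so no degenerate case intervenes here.)

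For part (3) I would apply \Cref{thm:relativeduality}(1) with subadditive sequence $\ua$ and superadditive sequence $\underline{\id}$ (being additive, $\underline{\id}$ is both sub- and superadditive, with $\widehat{\id}=1$). The sequence it produces is $\uau^{\id}=\uau$, and the two integrality hypotheses of \Cref{thm:relativeduality} reduce to ``$\au_n\in\N$'' and ``$\a_n\in\N$''; the first holds precisely for $n\ge\a_1$, which is exactly why the statement is phrased for $\{\au_n\}_{n\ge\a_1}$ (in the degenerate case where $\ua$ is bounded one has $\wa=0$ and the asserted $\widehat{\au}=\wa^{-1}=\infty$ is immediate from the stated conventions). \Cref{thm:relativeduality}(1) then gives that $\{\au_n\}_{n\ge\a_1}$ is superadditive with $\widehat{\au}=\widehat{\id}/\wa=\wa^{-1}$, and $\uau$ is nondecreasing because $\{d:\a_d\le n\}$ grows with $n$. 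For part (4) I would dually apply \Cref{thm:relativeduality}(2) with subadditive sequence $\underline{\id}$ and superadditive sequence $\ua$; the sequence it produces is $\uao$ (namely $\inf\{d:\a_d\ge n\}$ in the $n$-th slot), both integrality hypotheses ($\a_n\in\N$ and $\ao_n\in\N$) hold — here one uses that a superadditive sequence of natural numbers is automatically unbounded, so $\{d:\a_d\ge n\}\ne\emptyset$ for every $n$ — and the theorem yields that $\uao$ is subadditive with $\widehat{\ao}=\widehat{\id}/\wa=\wa^{-1}$; monotonicity of $\uao$ holds since $\{d:\a_d\ge n\}$ shrinks as $n$ grows.

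The mathematical content is carried entirely by \Cref{thm:relativeduality}, so the remaining work is bookkeeping with the conventions $\sup\emptyset=-\infty$ and $\inf\emptyset=\infty$: checking the integrality hypotheses and tracking where each transformed sequence begins. I expect part (2) to be the fussiest step — matching the double transforms $\overrightarrow{\uau}$, $\overleftarrow{\uao}$ with $\ua$ on the nose, rather than only up to a diagonal index shift, is what forces the strict-monotonicity hypothesis and requires keeping the starting indices straight.
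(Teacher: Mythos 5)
Parts (1), (3), and (4) of your proposal are correct and line up with the paper's approach: your adjunction argument for (1) is a clean direct substitute for the citation to Johnson's Corollary 2.8, and your treatment of (3) and (4) by specializing \Cref{thm:relativeduality} with $\ub=\underline{\id}$ (resp.\ $\ua=\underline{\id}$) is exactly what the paper does, including the correct observations about when $\au_n$ and $\ao_n$ land in $\N$.

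The gap is in part (2), which you dismiss as "the same style of computation." It is not, and if you carry out your adjunction method here it exposes the problem. The relevant equivalence for $\overrightarrow{\uau}$ is not one of the two adjunctions you wrote; rather, for nondecreasing $\ua$ one finds $\au_n\le m \iff \a_{m+1}>n$, so
\[
\overrightarrow{\uau}_m \;=\; \sup\{n: \au_n\le m\} \;=\; \sup\{n: n<\a_{m+1}\} \;=\; \a_{m+1}-1,
\]
and dually $\overleftarrow{\uao}_m=\a_{m-1}+1$ for $m\ge2$. These equal $\a_m$ only when $\ua$ has consecutive jumps $\a_{m+1}=\a_m+1$, which is far stronger than strict monotonicity. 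Concretely, $\a_n=2n$ is strictly increasing, yet $\au_1=-\infty$, $\au_2=\au_3=1$, $\au_4=2$, so $\overrightarrow{\uau}_1=\sup\{d:\au_d\le1\}=3\neq 2=\a_1$. So the "indices $n<\a_1$ drop out" heuristic does not rescue the identity, and a bare appeal to strict monotonicity is not enough. I would also flag that the paper's own argument for (2) has the same flaw: it substitutes $n=\au_{\a_n}$ and then invokes equation (2.4) with $\uau$ playing the role of $\ua$, but that equation requires the sequence to be increasing, and $\uau$ generally is not (as the example above shows with $\au_2=\au_3$). If you want to repair part (2), you need either the stronger hypothesis $\a_{m+1}=\a_m+1$ or a reinterpretation of the identity up to an index/value shift; the strictly-increasing hypothesis alone does not close the gap.
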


\begin{proof}
%

Assertion (1) as well as the assertions that whenever $\ua$ is superadditive,  $\uao$ is subadditive and whenever $\ua$ is superadditive, then $\uao$ is subadditive are shown in \cite[Corollary 2.8]{Johnson11}. 

For part (2), note that  whenever $\ua$ is increasing the following hold
\begin{eqnarray}
\label{eq:dual}
 \au_{\a_n}=  \sup\{t: \a_t\leq \a_n\}=  n\\
\label{eq:dual'}
 \ao_{\a_n}=  \inf\{t: \a_t\geq \a_n\}=  n
\end{eqnarray}
 Given this, we obtain by applying equation \eqref{eq:dual} for  $\ua, \uau$ the following identity
 \[
\overrightarrow{\au}_n= \overrightarrow{\au}_{ \au_{\a_n}}=\a_n.
 \]
Similarly,  applying equation \eqref{eq:dual'} for each of the sequences $\ua, \uao$ we obtain
 \[
 \overleftarrow{\ao}_n= \overleftarrow{\ao}_{ \ao_{\a_n}}=\a_n.
 \]

The remaining assertions of the theorem regard the asymptotic growth factors. These can be recovered from  \Cref{thm:relativeduality}  as follows: first, observe that setting $\id_n=n$ in yields $\widehat{\id}=1$, $\uao^\id=\uao$ and $\uau^\id=\uau$. Note that if $\ua$ is nondecreasing, so are $\uau$ and $\uao$ by definition.

If $\ua$ is a superadditive sequence of natural numbers then it is unbounded above as $\a_n\geq n\a_1\geq n$. It follows that  $\uao_n\in \N$ for all $n\in\N$ whenever the sequence $\ua$ is superadditive. If $\ua$ is a nondecreasing sequence it follows that $\uau_n\in \N$ for all $n\geq \a_1$.

Since  $\underline{\id}$ is both subadditive and superadditive, setting $\b=\id$ in part (1) of \Cref{thm:relativeduality} yields for subbadditive $\ua$ that $\widehat{\uau}=\wa^{-1}$ as in part (3) of \Cref{lem:DualityProperties}, and setting $\ua=\id$ in part (2) of \Cref{thm:relativeduality} yields for superadditive $\ub$ that $\widehat{\ubo}=\wb^{-1}$ as in part (4) of \Cref{lem:DualityProperties}.

\end{proof}

\begin{ex}
In the absence of the hypothesis that $\ua$ is nondecreasing, it need not be true that $\overleftarrow{\uau}=\ua$. Consider the sequence 
$
\a_n=\begin{cases} 
n & \text{ if } n \text{ is odd}\\
n/2 & \text{ if } n \text{ is even}.
\end{cases}
$
Then we compute
\begin{center}
\begin{tabular}{c|cccccc}
n & 1& 2 & 3 & 4 & 5  \\
\hline
$\a_n$ & 1 & 1 & 3 & 2 & 5 \\
$\au_n$ & 2& 4 & 6 & 8 & 10 \\
$\overleftarrow{\au}_n$ & 1& 1 & 2 & 2 & 3.
\end{tabular}
\end{center}

Likewise, in the absence of the hypothesis that $\ua$ is increasing, it need not be true that $\overrightarrow{\uau}=\ua=\ua$. Consider $\a_n=\lceil n/2\rceil$. Then $\ua$ is subadditive and non decreasing (but not increasing), $\au_n=2n$ and $\overrightarrow{\au}=\ua_n=\lfloor n/2\rfloor$. 

\end{ex}

\begin{ex}
In the more general setting of \Cref{def:overunder} one does not obtain a satisfactory duality theory in the sense that the operations $\overrightarrow{\ua}^\b$,  $\overleftarrow{\ua}^\b$ need not be mutually inverse even when both sequences $\ua, \ub$ are nondecreasing. Indeed, consider $\a_n=\lceil \frac{n}{2} \rceil$ and $\b_n=\lfloor \frac{n}{2} \rfloor$ which yield $\ua\neq {\overleftarrow{\uau^\b}}^\b$ and $\ua\neq {\overleftarrow{\uau^\b}}^\a$ according to the table below 
\begin{center}
\begin{tabular}{c|cccccc}
n & 1& 2 & 3 & 4 & 5   \\
\hline
$\a_n$ & 1 & 1 & 2 & 2 & 3 \\
$\b_n$ & 0 & 1 & 1 & 2 & 2 \\
$\au^\b_n$ & $-\infty$ & 2 & 2 & 4 & 4 \\
${\overleftarrow{\au^\b}}^\b_n$ & 2 & 2 & 2 & 2 & 2\\
${\overleftarrow{\uau^\b}}^\a_n$ & 2 & 2& 2& 2& 4.
\end{tabular}
\end{center}
\end{ex}

We conclude by considering the transfer of the subadditive and superaditive properties from a sequence to its subsequences.

\begin{lem}
\label{lem:subseq}
Let $\ua=\{\a_n\}_{n\in\N}$ be a sequence. Define for $k\in\Z$ the subsequence $\ua[k]=\{\a_{n+k}\}_{n\in\N,n> -k}$, that is, the $n$-th member of the sequence $\ua[k]$ is $\a_{n+k}$, provided $n+k>0$.
\begin{enumerate}
\item If $k\ge 0$ and $\ua$ is subadditive and nondecreasing, then $\ua[k]$ is subadditive.
\item If $k\le 0$ and $\ua$ is superadditive and nondecreasing, then $\ua[k]$ is superadditive.
\item If $\wa$ exists then $\widehat{\a[k]}$ exists as well and $\widehat{\a[k]}=\wa$.
\end{enumerate}

\end{lem}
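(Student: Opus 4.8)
The plan is to handle the three parts in order, starting with part (1). Fix $k\geq 0$ and write $\underline{\beta}=\underline{\alpha}[k]$, so $\beta_n=\alpha_{n+k}$ for $n> -k$, i.e.\ for all $n\in\N$ since $k\geq 0$. I want $\beta_{i+j}\leq \beta_i+\beta_j$, that is, $\alpha_{i+j+k}\leq \alpha_{i+k}+\alpha_{j+k}$. The natural route is to apply subadditivity of $\underline{\alpha}$ to the indices $i+k$ and $j+k$, which gives $\alpha_{i+j+2k}\leq \alpha_{i+k}+\alpha_{j+k}$, and then to use that $\underline{\alpha}$ is nondecreasing together with $i+j+k\leq i+j+2k$ (which holds because $k\geq 0$) to conclude $\alpha_{i+j+k}\leq \alpha_{i+j+2k}\leq \alpha_{i+k}+\alpha_{j+k}$. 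This is the whole argument for (1).

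For part (2), fix $k\leq 0$ and again set $\beta_n=\alpha_{n+k}$, now defined only for $n>-k$. I need $\beta_i+\beta_j\leq \beta_{i+j}$ for $i,j>-k$, i.e.\ $\alpha_{i+k}+\alpha_{j+k}\leq \alpha_{i+j+k}$. Note $i+k, j+k\geq 1$ by the indexing constraint, so superadditivity of $\underline{\alpha}$ applies to these two indices and yields $\alpha_{i+k}+\alpha_{j+k}\leq \alpha_{i+j+2k}$. Since $k\leq 0$ we have $i+j+2k\leq i+j+k$, and because $\underline{\alpha}$ is nondecreasing, $\alpha_{i+j+2k}\leq \alpha_{i+j+k}$; combining gives the desired inequality. (One should note $i+j+2k\geq 1$ as well, which follows from $i+k\geq 1$, $j+k\geq 1$, so the term is a legitimate index.) This mirrors part (1) with the inequalities reversed.

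For part (3), suppose $\widehat{\alpha}=\lim_{n\to\infty}\frac{\alpha_n}{n}$ exists (in $\R\cup\{\pm\infty\}$, as guaranteed by Fekete's lemma when $\underline{\alpha}$ is sub- or superadditive, but in fact I only need the limit to exist). For any fixed $k\in\Z$, I would write
\[
\frac{\widehat{\alpha[k]}_n}{n}=\frac{\alpha_{n+k}}{n}=\frac{\alpha_{n+k}}{n+k}\cdot\frac{n+k}{n},
\]
valid for $n>\max(0,-k)$. As $n\to\infty$ the first factor tends to $\widehat{\alpha}$ (it is a subsequence/shift of the convergent sequence $\frac{\alpha_m}{m}$, taking $m=n+k\to\infty$) and the second factor tends to $1$, so the product tends to $\widehat{\alpha}$; this works equally well when $\widehat{\alpha}=\pm\infty$ since multiplying by a factor approaching $1$ preserves divergence to $\pm\infty$. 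Hence $\widehat{\alpha[k]}$ exists and equals $\widehat{\alpha}$.

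I do not anticipate a genuine obstacle here: all three parts are short manipulations. The only point requiring a modicum of care is bookkeeping of the index ranges in part (2)—ensuring that $i+k$, $j+k$, and $i+j+2k$ are all $\geq 1$ so that superadditivity of $\underline{\alpha}$ (which is only assumed for indices $\geq n_0$, here $n_0=1$) may legitimately be invoked—and, in part (3), making sure the argument for the growth factor does not secretly use sub-/superadditivity of $\underline{\alpha}[k]$ (it does not: convergence of $\frac{\alpha_n}{n}$ alone suffices).
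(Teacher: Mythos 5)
Your proof is correct and follows essentially the same route as the paper: part (1) uses subadditivity at indices $i+k,j+k$ followed by the nondecreasing property to drop from $i+j+2k$ to $i+j+k$, part (2) mirrors this, and part (3) factors $\frac{\alpha_{n+k}}{n}=\frac{\alpha_{n+k}}{n+k}\cdot\frac{n+k}{n}$. The only addition is your explicit check of the index ranges in part (2) and the remark about $\widehat{\alpha}=\pm\infty$, both harmless and sensible.
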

\begin{proof}
We focus on assertion (1), the second numbered assertion being similar. Under the hypotheses of (1)
\[
\a[k]_{a+b}=\a_{a+b+k} \leq \a_{a+b+2k} \leq \a_{a+k}+\a_{b+k}=\a[k]_a+\a[k]_b
\]
follows from the non decreasing property of $\ua$  for the first inequality and subbadditivity of $\au$ for the second.
Part (3) follows from
\[
\widehat{\a[k]}=\lim_{n\to \infty}\frac{a_{n+k}}{n}=\lim_{n\to \infty} \frac{a_{n+k}}{n+k}\cdot \lim_{n\to \infty}\frac{n+k}{n}=\wa
\]
\end{proof}

\section{Subadditive and superaditive sequences from graded families}
\label{sec:SequencesFromGradedFamilies}

We are interested in subadditive and superadditive sequences which occur in algebraic contexts. The following considerations introduce types of sequences we shall focus our attention on in the remainder of the manuscript.

\subsection{Valuations and initial degree}
\label{sect:initdeg}

Recall that a discrete valuation on a field $\mathbf{K}$ is a homomorphism $\nu:\mathbf{K}^*\to\Z$ on the units of $\mathbf{K}$ satisfying $\nu(xy) = \nu(x) + \nu(y)$ and $\nu(x + y) \geq \min\{\nu(x), \nu(y)\}$.  If $\mathbf{K}$ is the fraction field of a domain $R$ then a valuation is determined by its values on $R$ via $\nu(f/g)=\nu(f)-\nu(g)$, so we abuse notation by referring to valuations on $R$ instead of its field of fractions.  We furthermore restrict ourselves to valuations which are non-negative on $R$, which we call $R$-valuations.  
\begin{ex}
\label{ex:am}
Given a maximal ideal $\mathfrak{m}$ in a regular ring $R$, a simple example of an $R$-valuation is 
$\alpha_{\mathfrak{m}}(f)=\max\{k:f\in\mathfrak{m}^k\}$.  If $\fm$ is not a maximal ideal, $\alpha_{\fm}$ need not be a valuation; $\alpha_{\fm}(xy)\le \alpha_{\fm}(x)+\alpha_{\fm}(y)$ is always true but equality may not hold~\cite[Section~6.7]{HS06}.
\end{ex}
\begin{defn}
If $\nu$ is an $R$-valuation, denote  the minimum value taken by $\nu$ on $I$ by 
\[
\nu(I)=\min\{\nu(f) : 0\neq f\in I\}
\]
If $R$ is a standard graded ring with homogeneous maximal ideal $\mathfrak{m}$, then the {\bf initial degree} of $I$ is the minimum value taken by the valuation $\a_\m$ in \Cref{ex:am} on $I$
  \[
  \alpha(I)=\min\{\deg{f}: 0\neq f\in I\}=\max\{k:I\subseteq \mathfrak{m}^k\}.
  \]
\end{defn}

Recall that a {\bf graded family} of ideals $\mathcal{I}=\{I_n\}_{n\geq 1}$ of a ring $R$ is a family which satisfies $I_aI_b\subset I_{a+b}$ for all $a,b\in\N$.

\begin{lem}\label{lem:WaldSub}
Given a graded family $\mathcal{I}=\{I_n\}_{n\geq 1}$ of  ideals of a domain $R$ and an $R$-valuation $\nu$ the sequence  $\alpha_n=\nu(I_n)$ is subadditive.
\end{lem}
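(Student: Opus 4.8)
The plan is to verify the subadditivity inequality $\alpha_{i+j}\le \alpha_i+\alpha_j$ directly from the definitions. Fix $i,j\in\N$ and set $a=\alpha_i=\nu(I_i)$ and $b=\alpha_j=\nu(I_j)$. By definition of $\nu(I_i)$ there is a nonzero element $f\in I_i$ with $\nu(f)=a$, and similarly a nonzero $g\in I_j$ with $\nu(g)=b$. Since $\mathcal{I}$ is a graded family, $I_iI_j\subseteq I_{i+j}$, so the product $fg$ lies in $I_{i+j}$; moreover $fg\neq 0$ because $R$ is a domain. The multiplicativity of the valuation then gives $\nu(fg)=\nu(f)+\nu(g)=a+b$. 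Finally, since $\alpha_{i+j}=\nu(I_{i+j})$ is the \emph{minimum} of $\nu$ over nonzero elements of $I_{i+j}$, and $fg$ is one such element, we conclude $\alpha_{i+j}=\nu(I_{i+j})\le \nu(fg)=a+b=\alpha_i+\alpha_j$, which is exactly the desired subadditivity.

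The only subtlety — and the one place where the domain hypothesis is genuinely used — is the step $fg\neq 0$: without it, $fg$ might be $0$, in which case it is excluded from the set over which the minimum defining $\nu(I_{i+j})$ is taken, and the argument collapses. This is not a serious obstacle here since $R$ is assumed to be a domain, but it is worth flagging as the essential input. I would also note in passing that the assumption $\nu\ge 0$ on $R$ (i.e.\ that $\nu$ is an $R$-valuation) is what makes $\nu(I_n)$ a well-defined natural number, so that the sequence $\{\alpha_n\}$ genuinely lands in $\N$ as required by the framework of Section~\ref{s:sequences}; the subadditivity inequality itself does not need nonnegativity.

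Having established the inequality, I would remark that the statement of Lemma~\ref{lem:WaldSub} has immediate relevance to the running examples: taking $R=\kk[x_0,\dots,x_N]$, $\nu=\alpha_\mathfrak{m}$ the order valuation at the homogeneous maximal ideal, and $\mathcal{I}=\{I^{(n)}\}$ the family of symbolic powers of a homogeneous ideal $I$ (which is a graded family since $I^{(a)}I^{(b)}\subseteq I^{(a+b)}$), one recovers the classical subadditivity of the initial-degree sequence $\alpha(I^{(n)})$ underlying the Waldschmidt constant. The proof is short and self-contained, relying only on the multiplicativity of valuations, the defining containment of graded families, and the domain hypothesis.
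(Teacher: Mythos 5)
Your proof is correct and follows essentially the same route as the paper: multiply elements achieving the minimum valuation in $I_i$ and $I_j$, use $\nu(fg)=\nu(f)+\nu(g)$, and use the graded-family containment $I_iI_j\subseteq I_{i+j}$. The paper phrases this slightly more compactly by asserting $\nu(I_aI_b)=\nu(I_a)+\nu(I_b)$ directly, but the underlying argument is identical; your explicit remark that the domain hypothesis is needed to ensure $fg\neq 0$ is a worthwhile clarification.
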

\begin{proof}
The property $\nu(xy) = \nu(x) + \nu(y)$  implies that $\nu(I_aI_b)=\nu(I_a)+\nu(I_b)$ for all $a,b\in \N$.
It follows from the containment $I_aI_b\subset I_{a+b}$ for all $a,b\in\N$ that $\alpha_{a+b}=\nu(I_{a+b})\leq \nu(I_aI_b)=\nu(I_a)+\nu(I_b)=\alpha_a+\alpha_b$.
\end{proof}

One of the graded families of interest for this paper is formed by symbolic powers.

 \begin{defn}
  Given an ideal $I$ of a ring $R$, 
  the $n^{\mbox{\footnotesize{th}}}$ \textbf{symbolic power} of $I$ is 
  \[
   I^{(n)}=\bigcap_{P\in \Ass(I)} \left(I^nR_{P}\cap R\right).
  \]  We set $I^{(0)}=R$ by convention.
  \end{defn}
  
The growth of initial degree of the symbolic powers of an ideal is captured by the Waldschmidt constant. This invariant, formally introduced in \cite{BH10}, has often been featured implicitly in the geometric literature; see section \ref{s:Nagata} for further details and connections. More generally, the asymptotic growth factor of an arbitrary valuation applied to the symbolic powers of an ideal is  dubbed a skew Waldschmidt constant in~\cite{DFMS}.
  
 \begin{defn}
 \label{def:waldschmidt}
 The {\bf Waldschmidt constant} of a homogeneous ideal $I$ is the real number
 \[
 \widehat{\alpha}(I)=\lim_{n\to \infty}\frac{\alpha(I^{(n)})}{n}=\inf_{n\in\N} \frac{\alpha(I^{(n)})}{n}.
 \]
 Given a valuation $\nu:R\to \Z$, the {\bf skew Waldschmidt constant} of a homogeneous ideal $I$ is the real number
	\[
	\widehat{\nu}(I)=\lim_{n\to \infty}\frac{\nu(I^{(n)})}{n}=\inf_{n\in\N} \frac{\nu(I^{(n)})}{n}.
	\]
 \end{defn}
 
Throughout the paper $\overline{J}$ denotes the integral closure of an ideal $J$. The valuative criterion for integral closures~\cite[Theorem~6.8.3]{HS06} states that for a fixed ideal $I\subset R$ and every $f\in R$, $f\in \overline{I}$ if and only if $\nu(f)\ge \nu(I)$ for every $R$-valuation $\nu:R\to\Z$.  From this we get the following ideal membership test: there is containment $J\subset \overline{I}$ between two ideals if and only if $\nu(J)\ge \nu(I)$ for every $R$-valuation $\nu:R\to\Z$. 
We now define a sequence inspired by this criterion and its applications to the containment problem between the ordinary and symbolic powers of an ideal (see \cref{s:resurgence}). 

We say that a valuation $\nu$ is \textit{supported} on an ideal $I$ if $\nu(I)>0$.  
\begin{defn}
\label{def:betanu}
Given an ideal $I\subset R$ and an $R$-valuation $\nu$ supported on $I$, define
\[
\beta^{\nu}_n=\beta^{\nu}_n(I)=\sup\{d:\nu(I^{(d)}) < \nu(I^n)\}.
\]
\end{defn}

\begin{rem}
The fact that if $R$ is Noetherian $\beta^{\nu}_n\in\N$ for each $n\in\N$ follows from Swanson's theorem on linear equivalence of the symbolic and ordinary $I$-adic topologies. In detail, it is shown in  \cite{Swanson}  that there exists an integer $\ell$ (possibly dependent upon $I$) such that $I^{(\ell n)}\subseteq I^n$ for all $n\in \N$. This yields 
\[
\nu(I^{(\ell n)})\geq \nu(I^n) =n\nu(I) \geq \nu(I)=\nu(I^{(1)}).
\]
Consequently, since the sequence $\{\nu(I^{(d)})\}_{d\in\N}$ is nondecreasing, we have $1\leq \beta^{\nu}_n< \ell n$.
\end{rem}


We come to our first application of \Cref{thm:relativeduality}. 

\begin{prop}\label{lem:valuationLimit}
For any Noetherian domain $R$, any ideal $I\subset R$ and any $R$-valuation $\nu$ supported on $I$ the sequence $\beta^{\nu}_n=\beta^{\nu}_n(I)$ is superadditive and satisfies
\[
\widehat{\b^\nu}=\lim_{n\to\infty}\frac{\beta^\nu_n}{n}=\sup_{n\in\N}\left\lbrace \frac{\beta_n^{\nu}}{n}\right\rbrace=\frac{\nu(I)}{\widehat{\nu}(I)}
\]
\end{prop}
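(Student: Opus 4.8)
The plan is to recognize the sequence $\beta^\nu_n$ as a dual-type transformation of the two natural sequences $\alpha_d = \nu(I^{(d)})$ and $\gamma_n = \nu(I^n)$, and then invoke \Cref{thm:relativeduality}. First I would set $\ua = \{\nu(I^{(d)})\}_{d\in\N}$ and $\ub = \{\nu(I^n)\}_{n\in\N}$. By \Cref{lem:WaldSub} applied to the graded family of symbolic powers, $\ua$ is subadditive; and since $\nu(I^n) = n\nu(I)$ is linear with positive slope $\nu(I)>0$ (here we use that $\nu$ is supported on $I$), $\ub$ is superadditive (in fact additive) and strictly increasing. Both sequences consist of non-negative integers and $\ua$ is nondecreasing. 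The key identification is that, up to the strict-versus-weak inequality in its definition, $\beta^\nu_n = \sup\{d : \nu(I^{(d)}) < \nu(I^n)\}$ agrees with $\au^\b_n = \sup\{d : \alpha_d \le \b_n\}$; one must handle the discrepancy between ``$<$'' and ``$\le$'' carefully. Because $\ub$ is strictly increasing, $\nu(I^n) < \nu(I^{n+1})$, and since $\ua$ is an integer sequence, $\sup\{d : \alpha_d < \b_n\} = \sup\{d : \alpha_d \le \b_n - 1\}$; this is a $\au$-type transform relative to the shifted sequence $\ub - 1$, which is again superadditive and strictly increasing with the same asymptotic growth factor $\nu(I)$. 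The Remark preceding the Proposition already guarantees $\beta^\nu_n \in \N$ for each $n$, so the finiteness hypothesis in \Cref{thm:relativeduality} is met.

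Next, with these identifications in hand, part (1) of \Cref{thm:relativeduality} immediately yields that $\{\beta^\nu_n\}$ is superadditive and that
\[
\widehat{\beta^\nu} = \frac{\widehat{\ub - 1}}{\wa} = \frac{\widehat{\ub}}{\wa} = \frac{\nu(I)}{\widehat{\nu}(I)},
\]
using $\widehat{\ub} = \lim_{n\to\infty} \frac{n\nu(I)}{n} = \nu(I)$ and $\wa = \lim_{d\to\infty}\frac{\nu(I^{(d)})}{d} = \widehat{\nu}(I)$ by \Cref{def:waldschmidt}. The statement $\widehat{\beta^\nu} = \lim_n \frac{\beta^\nu_n}{n} = \sup_n \frac{\beta^\nu_n}{n}$ then follows from Fekete's lemma in the superadditive case, exactly as recorded after \Cref{def:subsuper}. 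One should double-check the edge cases: if $\widehat{\nu}(I) = 0$ the formula reads $\nu(I)/0 = \infty$ under the stated conventions, consistent with the convention in \Cref{thm:relativeduality}; and since $\nu$ is supported on $I$, $\nu(I) \ge 1 > 0$, so we never divide by zero in the numerator direction, and $\widehat{\nu}(I) = \inf_d \frac{\nu(I^{(d)})}{d}$ could in principle be $0$ but not $\infty$.

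The main obstacle I anticipate is purely bookkeeping rather than conceptual: matching the strict inequality ``$\nu(I^{(d)}) < \nu(I^n)$'' in \Cref{def:betanu} to the weak inequality ``$\alpha_d \le \b_n$'' used in \Cref{def:overunder}, and confirming that the shift $\ub \rightsquigarrow \ub - 1$ (or equivalently working with the sequence $\{\nu(I^n) - 1\}$) does not disturb superadditivity, strict monotonicity, integrality, or the growth factor. A clean way to sidestep subtleties is to observe directly from the definitions that $\beta^\nu_n = \sup\{d : \alpha_d \le \nu(I^n) - 1\} = \au^{\ub'}_n$ where $\ub' = \{\nu(I^n)-1\}_{n\in\N}$; since $\nu(I) \ge 1$, $\ub'$ has positive entries for $n \ge 1$ (as $\nu(I^n) = n\nu(I) \ge n \ge 1$, with $\nu(I^n)-1 \ge 0$, and one may restrict to $n$ large or note the convention allows $0$ harmlessly in the relevant suprema), it is superadditive since $\nu(I^{a+b}) - 1 = (\nu(I^a)-1) + (\nu(I^b)-1) + 1 \ge (\nu(I^a)-1)+(\nu(I^b)-1)$, and $\widehat{\ub'} = \nu(I) = \widehat{\ub}$. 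Alternatively, if one prefers to avoid shifting, an independent direct proof of superadditivity of $\beta^\nu$ mimicking the proof of \Cref{thm:relativeduality}(1) is short: if $d = \beta^\nu_m$ and $d' = \beta^\nu_n$ then $\nu(I^{(d)}) < \nu(I^m)$ and $\nu(I^{(d')}) < \nu(I^n)$, so $\nu(I^{(d+d')}) \le \nu(I^{(d)}) + \nu(I^{(d')}) < \nu(I^m) + \nu(I^n) = \nu(I^{m+n})$, giving $\beta^\nu_{m+n} \ge d + d'$; the limit formula then follows from the growth-factor computation above combined with Fekete.
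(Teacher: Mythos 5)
Your proposal is correct and follows essentially the same route as the paper: you identify $\beta^\nu_n$ as the $\overrightarrow{\phantom{x}}$-transform of $\{\nu(I^{(d)})\}$ relative to the shifted sequence $\{\nu(I^n)-1\}$ (precisely to convert the strict inequality in \Cref{def:betanu} into the weak one of \Cref{def:overunder}), which is exactly the paper's choice of $\underline{\gamma}$ and $\underline{\delta}$, and then invoke \Cref{thm:relativeduality}(1) together with Fekete's lemma for the sup-equals-limit claim. Your additional direct verification of superadditivity is a harmless reprise of the argument already inside \Cref{thm:relativeduality}(1) and does not change the substance.
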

\begin{proof}
We first give an alternate definition for $\beta^\nu$. Set $\gamma_d=\nu(I^{(d)})$ and $\delta_n=\nu(I^n)-1=n\nu(I)-1$ for $n,d\in \N$. Note that $\underline{\gamma}$ is subadditive by \Cref{lem:WaldSub}, $\underline{\delta}$ is superadditive by its definition, and we have $\widehat{\gamma}=\widehat{\nu}(I)$ and $\widehat{\delta}=\nu(I)$. Then \Cref{def:betanu} can be rewritten as $\beta^\nu=\overrightarrow{\gamma^\delta}$.  An application of \Cref{thm:relativeduality} (1) yields that the the sequence $\beta^{\nu}$ is superadditive and $\widehat{\b^\nu}=\nu(I)/\widehat{\nu}(I)$. The first equality  in the claim follows from superadditivity of $\beta^{\nu}$ and Fekete's lemma.  
\end{proof}

In the next subsection we interpret the asymptotic growth factor $\widehat{\b^\nu}$ in terms of an invariant of $I$ termed asymptotic resurgence.

\subsection{Asymptotic resurgence}
\label{s:resurgence}
The various invariants defined below under the name of resurgence were introduced  to study the {\em containment problem} which asks for pairs of natural numbers $n,d$ for which $I^{(n)}\subseteq I^d$.

\label{sec:resurgence}
\begin{defn}
The {\bf resurgence} of an ideal $I$, introduced in~\cite{BH10}, is the quantity
\[
\rho(I)=\sup\left\{\frac{n}{d} : I^{(n)}\not\subseteq I^d \right\}.
\]
Its asymptotic counterpart is the {\bf asymptotic resurgence} of $I$, introduced in~\cite{GHV13}
\[
\wrho(I)=\sup\left\{\frac{n}{d} : I^{(nt)}\not\subseteq I^{dt} \text{ for } t\gg 0 \right\}.
\]
Version of these invariants using integral closures were defined in~\cite{DFMS}.  These are the {\bf ic-resurgence}
\[
\rho_{ic}(I)=\sup\left\{\frac{n}{d} : I^{(n)}\not\subseteq \overline{I^d} \right\}
\]
and the {\bf ic-asymptotic resurgence}
\[
\wrho_{ic}(I)=\sup\left\{\frac{n}{d} : I^{(tn)}\not\subseteq \overline{I^{td}} \text{ for } t\gg 0 \right\}.
\]
  It is shown in~\cite[Corollary~4.14]{DFMS} that $\rho_{ic}(I)=\wrho_{ic}(I)=\wrho(I)$.  By contrast, in general we have $\wrho(I)\neq \rho(I)$; see \cite{DHNST15}.
\end{defn}

In this section we discuss two numerical sequences which  arise in conjunction with these notions of resurgence:
\begin{eqnarray*}
\lambda_n = \lambda_n(I) = \max\{d:I^{(d)}\not\subseteq I^n\}  \text{ and }
\beta_n =\beta_n(I) =\max\{d:I^{(d)}\not\subseteq \overline{I^n}\}.
\end{eqnarray*}
Notice that
\[
\rho(I)=\sup_{n\in\N}\left\lbrace \frac{\lambda_n}{n} \right\rbrace \mbox{ and } \wrho(I)=\sup_{n\in\N} \left\lbrace \frac{\beta_n}{n} \right\rbrace
\]
follows from the definition of resurgence and asymptotic resurgence, respectively.  If $R$ is a regular ring and $I$ is radical then~\cite[Remark~5.5]{DD21} implies that in fact
\[
\lim_{n\to\infty}\frac{\lambda_n}{n}=\lim_{n\to\infty}\frac{\beta_n}{n}=\wrho(I).
\]
The assumption that $I$ is radical can be removed (see~\cite[Remark~4.23]{DD21}).  Thus we see that the sequence $\{\beta_n\}$ behaves like a superadditive sequence in the sense that
\[
\lim_{n\to\infty}\frac{\beta_n}{n}=\sup_{n\in\N}\left\lbrace \frac{\beta_n}{n}\right\rbrace.
\]
Since there are examples where $\rho(I)\neq \wrho(I)$ (see~\cite{DHNST15}), $\{\lambda_n\}$ is not necessarily superadditive.  We do not know if $\ub=\{\beta_n\}_{n\in\N}$ is always a superadditive sequence.  However, we are able to replace $\ub$ by a valuative sequence of the type discussed in \Cref{def:betanu} which is superadditive and whose asymptotic growth rate is also equal to the asymptotic resurgence.

\begin{prop}
\label{prop:betahat=rhohat}
Let $I$ be an ideal in a regular ring $R$.  For any valuation $\nu:R\to\Z$, we have $\beta^\nu_n\le \beta_n\le \lambda_n$.  Moreover there is a choice of valuation $\nu$ so that
\[
\widehat{\b^\nu}=\lim_{n\to\infty} \frac{\beta^\nu_n}{n}=\lim_{n\to\infty} \frac{\beta_n}{n}=\lim_{n\to\infty}\frac{\lambda_n}{n}=\wrho(I).
\]
\end{prop}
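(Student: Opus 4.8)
The plan is to prove the two inequalities first and then to produce the valuation $\nu$ realizing the common limit. For the inequalities $\beta^\nu_n \le \beta_n \le \lambda_n$: the second inequality is immediate from the containment $\overline{I^n}\supseteq I^n$, since $I^{(d)}\not\subseteq \overline{I^n}$ forces $I^{(d)}\not\subseteq I^n$, so the supremum defining $\beta_n$ is taken over a subset of the indices defining $\lambda_n$. For the first inequality, suppose $d\le \beta^\nu_n$, so (using that $\{\nu(I^{(d)})\}_d$ is nondecreasing) we have $\nu(I^{(d)}) < \nu(I^n)$; by the valuative criterion for integral closure (quoted in the excerpt), $I^{(d)}\not\subseteq \overline{I^n}$ because the single valuation $\nu$ already witnesses the failure of the membership test. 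Hence $d\le \beta_n$, giving $\beta^\nu_n\le \beta_n$. Note this holds for \emph{every} $R$-valuation $\nu$ supported on $I$, with no regularity hypothesis needed yet.

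Next, recall from the discussion preceding the proposition that, since $R$ is regular, $\lim_n \lambda_n/n = \lim_n \beta_n/n = \wrho(I)$ (citing \cite[Remark~5.5]{DD21} and \cite[Remark~4.23]{DD21}); this already sandwiches $\liminf_n \beta^\nu_n/n \le \wrho(I)$ for any such $\nu$, and by \Cref{lem:valuationLimit} the sequence $\beta^\nu$ is superadditive with $\widehat{\b^\nu}=\nu(I)/\widehat{\nu}(I)$. So it remains to exhibit a single valuation $\nu$ for which $\nu(I)/\widehat{\nu}(I) \ge \wrho(I)$, equivalently $\widehat{\nu}(I) \le \nu(I)/\wrho(I)$; combined with the sandwich this will force equality throughout. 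The strategy is to use the characterization $\wrho(I)=\wrho_{ic}(I)=\rho_{ic}(I)$ from \cite[Corollary~4.14]{DFMS}: the ic-resurgence is a supremum over failures $I^{(n)}\not\subseteq \overline{I^d}$, and each such failure is detected, via the valuative criterion, by \emph{some} $R$-valuation $\nu$ with $\nu(I^{(n)}) < \nu(I^d) = d\,\nu(I)$, i.e. $\nu(I^{(n)})/n < (d/n)\nu(I)$. One then wants to extract a single valuation that simultaneously (asymptotically) witnesses the supremum; this is where the work of \cite{DFMS} on the structure of the set of relevant valuations (Rees valuations of powers of $I$, or an appropriate limit/compactness argument over the finitely many Rees valuations involved) should be invoked to select $\nu$ with $\widehat{\nu}(I) = \lim_n \nu(I^{(n)})/n \le \nu(I)/\wrho(I)$.

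The main obstacle is exactly this last step: passing from ``for each rational $n/d$ below $\wrho_{ic}(I)$ there exists a valuation witnessing the non-containment'' to ``there exists a single valuation $\nu$ with $\nu(I)/\widehat{\nu}(I)\ge \wrho(I)$''. The resolution should be that only finitely many valuations — the Rees valuations of $I$ (or of a suitable power) — are ever needed to test integral-closure membership $I^{(n)}\subseteq\overline{I^n}$, so the supremum defining $\wrho_{ic}(I)$ is attained as a maximum over this finite set of the quantities $\nu(I)/\widehat{\nu}(I)$; picking $\nu$ achieving that maximum finishes the proof. I would organize the write-up so that this finiteness/selection point is isolated as the one substantive lemma, with the inequalities and the invocation of \Cref{lem:valuationLimit} and Fekete's lemma being routine bookkeeping around it.
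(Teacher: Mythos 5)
Your proposal is on the right track and essentially parallels the paper's argument: the inequalities follow as you say, the equality $\lim_n \beta_n/n=\lim_n\lambda_n/n=\wrho(I)$ comes from \cite{DD21}, and \Cref{lem:valuationLimit} reduces the problem to producing one valuation $\nu$ with $\nu(I)/\widehat{\nu}(I)=\wrho(I)$.

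The part you flag as ``the main obstacle'' — passing from ``each non-containment $I^{(n)}\not\subseteq\overline{I^d}$ is witnessed by \emph{some} valuation'' to ``a \emph{single} (Rees) valuation computes $\wrho_{ic}(I)$'' — is not something you need to re-derive. It is precisely the content of \cite[Theorem~4.10]{DFMS}, which says outright that $\wrho(I)=\nu(I)/\widehat{\nu}(I)$ for some valuation $\nu$, and in fact one may take $\nu$ among the (finitely many) Rees valuations of $I$. The paper simply cites that theorem and is done. Your sketched resolution (finiteness of Rees valuations, attaining the sup as a max over that finite set) is exactly the mechanism behind that theorem, so your instincts are correct, but as written your proposal leaves the crucial step as a plausibility argument rather than a closed step. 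Replace the ``this should be the resolution'' paragraph with a citation of \cite[Theorem~4.10]{DFMS} and the proof is complete and coincides with the paper's.
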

\begin{proof}
The inequalities $\beta^\nu_n\le \beta_n\le \lambda_n$ follow from the definitions of the sequences and the valuative criterion for integral closures.  The equalities
\[
\lim_{n\to\infty} \frac{\beta_n}{n}=\lim_{n\to\infty}\frac{\lambda_n}{n}=\wrho(I)
\]
follow from~\cite[Remark~5.5]{DD21}, as noted above.  By~\cite[Theorem~4.10]{DFMS}, $\wrho(I)=\nu(I)/\widehat{\nu}(I)$ for some choice of valuation (in fact, one of the \textit{Rees} valuations of $I$ will accomplish this).  Hence, for this valuation, \Cref{lem:valuationLimit} yields that $\lim_{n\to\infty} \beta^\nu_n/n=\wrho(I)$, completing the proof.
\end{proof}

\subsection{Castelnuovo-Mumford regularity}
\begin{defn}
\label{def:reg}
Suppose $R=\bigoplus_{i\ge 0} R_i$ is a graded ring with residue field $\kk=R/R_{+}$ where $R_{+}=\bigoplus_{i>0} R_i$.
The {\bf Castelnuovo-Mumford  regularity} of a graded module $M$ over $R$ is
\[
\reg(M)=\max\{j-i \mid {\rm Tor}^R_i(M,\kk)_j\neq 0\}.
\]
 When $M$ has finite length and $R$ is standard graded the regularity  can also be expressed as $\reg(M)=\soc(M):=\max\{i \mid M_i\neq 0\}$. Moreover there is an alternate definition in terms of the local cohomology modules of $M$ supported at the homogeneous maximal ideal $\fm$
 \[
\reg(M)=\sup\{\soc\left(H^i_\fm(M)\right)+i \mid 0\leq i\leq \dim(M)\}.
\]
\end{defn}

Keeping with the theme of our writing, we are interested in families of ideals or modules whose Castelnuovo-Mumford regularities give subadditive or superadditive sequences. The subbaditive case is considered in the following lemma, while a family with superadditive regularity sequence is illustrated in \Cref{rem:betasuperadditive}.
\begin{lem}
\label{lem:CM}
Given a graded family $\mathcal{I}=\{I_n\}_{n\in \N}$ of homogeneous ideals of a standard graded ring $R$ so that each quotient ring $R/I_n$ is Cohen-Macaulay of the same dimension $\dim(R/I_n)=d$. Then the sequences of Castelnuovo-Mumford regularities of the members in the family
$
\{\reg(I_n)\}_{n\in \N}
$
is subadditive.
\end{lem}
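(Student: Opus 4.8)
The plan is to reduce the statement to an elementary degree count after an Artinian reduction. I would first invoke the standard identity $\reg(I_n)=\reg(R/I_n)+1$, valid for every proper nonzero homogeneous ideal, so that, setting $r_n:=\reg(R/I_n)$, it suffices to prove $r_{a+b}\le r_a+r_b+1$ for all $a,b$; adding $2$ to both sides, this is precisely $\reg(I_{a+b})\le\reg(I_a)+\reg(I_b)$.

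Next, since regularity and the Cohen-Macaulay property are unchanged under extension of the base field, I would assume $\kk$ is infinite and fix $a,b$. Because $R/I_a$, $R/I_b$, $R/I_{a+b}$ are Cohen-Macaulay of the common dimension $d$, I would choose a sufficiently general tuple $\Theta=\theta_1,\dots,\theta_d\in R_1$ that is simultaneously a system of parameters --- hence, by Cohen-Macaulayness, a regular sequence --- on all three modules; such a $\Theta$ exists since the valid choices form a dense open subset of $R_1^{\,d}$ for each module and a finite intersection of dense opens has a $\kk$-point over an infinite field. Quotienting a Cohen-Macaulay module by a regular sequence of linear forms preserves regularity, so $r_n=\reg\big(R/(I_n+(\Theta))\big)$ for $n\in\{a,b,a+b\}$, and as each $R/(I_n+(\Theta))$ has finite length this regularity equals its top nonvanishing degree; thus $(I_n+(\Theta))_t=R_t$ whenever $t>r_n$.

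The argument would then conclude with a degree count. For $t\ge r_a+r_b+2$ one has $t-r_a-1\ge r_b+1>r_b$, so $(I_a+(\Theta))_{r_a+1}=R_{r_a+1}$ and $(I_b+(\Theta))_{t-r_a-1}=R_{t-r_a-1}$; since $R$ is standard graded this gives
\[
R_t=R_{r_a+1}\cdot R_{t-r_a-1}\subseteq\big((I_a+(\Theta))(I_b+(\Theta))\big)_t\subseteq (I_{a+b}+(\Theta))_t\subseteq R_t,
\]
where the middle inclusion uses $(I_a+(\Theta))(I_b+(\Theta))\subseteq I_aI_b+(\Theta)\subseteq I_{a+b}+(\Theta)$, which is exactly where the graded-family hypothesis $I_aI_b\subseteq I_{a+b}$ enters. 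Hence $R/(I_{a+b}+(\Theta))$ vanishes in all degrees $\ge r_a+r_b+2$, so $r_{a+b}\le r_a+r_b+1$, as needed.

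I expect the only genuinely delicate point to be the simultaneous generic choice of $\Theta$ --- this is where the hypothesis that all the quotients have the same dimension is used. Everything after the reduction to the Artinian case is formal: the Cohen-Macaulay hypothesis is spent entirely on passing to an Artinian reduction in which regularity is just the top degree, the product containment survives because $(\Theta)$ is an ideal, and then only the one-line degree estimate above remains.
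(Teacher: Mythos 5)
Your proof is correct and takes essentially the same route as the paper: pass to an infinite residue field, cut by a linear system of parameters that is simultaneously a regular sequence on $R/I_a$, $R/I_b$, $R/I_{a+b}$ (possible thanks to the shared Cohen--Macaulay property and common dimension), identify regularity with the top nonvanishing degree of the resulting Artinian quotients, and then use $I_aI_b\subseteq I_{a+b}$ together with $R_{t}=R_{r_a+1}R_{t-r_a-1}$ to produce the needed vanishing. The only difference is cosmetic: the paper works with $\reg(I_n)$ and phrases the final step as $\fm^{r_a+r_b}=\fm^{r_a}\fm^{r_b}\subseteq\widetilde{I}_a\widetilde{I}_b\subseteq\widetilde{I}_{a+b}$ inside $\widetilde R=R/(\Theta)$, whereas you work with $\reg(R/I_n)$ and the ideals $I_n+(\Theta)$ in $R$.
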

\begin{proof}
We may assume that the residue field of $R$ is infinite by tensoring with an infinite extension of the base field if necessary; this does not change the Castelnuovo-Mumford regularity of the given ideals. Thanks to the Cohen-Macaulay property one may reduce to the Artinian case. In detail, fix $a,b\in \N$ and choose a sequence of linear forms $\ell_1, \ldots, \ell_{d}$ which is simultaneously a regular sequence on $R/I_a, R/I_b$ and also on $R/I_{a+b}$. Now set $\widetilde{R}=R/(\ell_1, \ldots, \ell_d)$ and $\widetilde{I}_n=I_n+(\ell_1, \ldots, \ell_d)/(\ell_1, \ldots, \ell_d)$ for $n\in\{a,b,a+b\}$. This gives that $\reg(\widetilde{I}_n)=\reg(I_n)$ for $n\in\{a,b,a+b\}$. Moreover, setting $\fm$ to be the homogeneous maximal ideal of $\widetilde{R}$, since each of the quotients $\widetilde{R}/\widetilde{I}_n$ is Artinian we have for $n\in\{a,b,a+b\}$
\[
r_n=\reg(I_n)=\reg\left(\widetilde{I}_n\right)=\min\{d: \left(\widetilde{R}/\widetilde{I}_n\right)_d=0\}=\min\{d: \fm^d\subseteq \widetilde{I}_n\}.
\]

It follows from the containment $I_aI_b\subset I_{a+b}$  that $\widetilde{I}_a\widetilde{I}_b\subset \widetilde{I}_{a+b}$. 
We deduce
\[
\fm^{r_a+r_b}=\fm^{r_a}\fm^{r_b}\subseteq \widetilde{I}_a\widetilde{I}_b\subseteq \widetilde{I}_{a+b}
\]
and thus it follows that $r_{a+b}=\reg\left(\widetilde{I}_{a+b}\right)\leq r_a+r_b$.
\end{proof}

\begin{defn}
\label{def:aspCM}
Ideals $I$ for which every member of the sequence of symbolic powers  $\{I^{(n)}\}$ yields a Cohen-Macaulay quotient are dubbed {\bf aspCM} ideals in \cite{walker}. 
\end{defn}
The aspCM class includes complete intersection ideals, saturated ideals with $\dim R/I=1$, that is defining ideals for finite sets of points or fat points (not necessarily reduced schemes supported at finite sets of points) in $\P^N$, ideals defining matroid configurations in $\P^N$ \cite{GHMN}, and generic determinantal ideals.

\begin{rem}
Even under the hypotheses of \Cref{lem:CM}, the closely related sequence $\{\reg(R/I_n)\}_{n\in\N}$ need not be subadditive. Take for example $I_n=(f^n)$ where $f$ is a homogeneous element of degree $d>0$ in a standard graded polynomial ring $R$. Then $\reg(R/I_n)=dn-1$ is not subadditive.
\end{rem}

We define an  invariant which captures the asymptotic growth of the regularity for a family of ideals.
  
 \begin{defn}
 \label{def:areg}
 The {\bf asymptotic regularity} of a family $\cI=\{I_n\}_{n\in \N}$ of homogeneous ideals  is the following limit, provided it exists,
 \[
 \widehat{\reg}(\cI)=\lim_{n\to \infty}\frac{\reg(I_n)}{n}.
 \]
 \end{defn}
 
By way of Fekete's lemma, \Cref{lem:CM} provides a set of assumptions under which the limit in \Cref{def:areg} exists. We shall be primarily interested in the asymptotic regularity for the family of symbolic powers $\cI=\{I^{(n)}\}_{n\in \N}$ of a given ideal $I$, which we denote $ \widehat{\reg}(I)$. This family does not always satisfy the conditions of  \Cref{lem:CM}, but it does so, for example, when $I$ defines a finite set of points in projective space. In this case, the existence of  $ \widehat{\reg}(I)$ also follows from the much more general result in \cite[Theorem B]{CEL01}. There are few other instances where the existence of $\widehat{\reg}(I)$ is known, for example, when $I$ is a monomial ideal cf. \cite[Theorem 3.6.]{DHNT21}, or more generally, when the symbolic Rees algebra of $I$ is Noetherian, which is shown in the ongoing work of the second author with Hop and H\`a. 

The next example points out that in general the sequence $\{\reg(I_n)\}_{n\in\N}$ need not be subadditive for a graded family of ideals $\mathcal{I}=\{I_n\}_{n\in\N}$ even when that family consists of symbolic powers of monomial ideals and thus $\widehat{\reg}(\cI)$ exists. The ideals $J(m,s)$ in the next example yields Cohen-Macaulay quotient rings, but their symbolic powers do not, thus they are not aspCM.
 
 \begin{ex}
 \label{ex:nonsubreg}
 In \cite[Theorem 5.15]{DHNT21} Dung, Hien, Nguyen, and Trung produce examples of squarefree monomial ideals $J(m,s)$ such that 
 \[
 \reg \left(J(m,s)^{(t)}\right)=
 \begin{cases}
 m(s + 1)n & t=2n\\
 m(s + 1)n + m + s - 1 &t=2n+1
 \end{cases}.
 \]
 Combinatorially the ideals $J(m,s)$ are described as cover ideals for corona graphs obtained by adding $s$ pendant edges to each vertex of a complete graph $K_m$. The ideals in this family were singled out as examples of squarefree monomial ideals for which the function $t\mapsto \reg(J(m,s)^{(t)})$ is not eventually linear. For these symbolic power ideals the regularity matches the largest degree of a minimal generator, which shows that if an ideal $J$ is generated in degrees $\leq d$ one cannot conclude that $J^{(t)}$ is generated in degrees $\leq td$. This relates to a question of Huneke \cite[Problem 0.4]{HunekeAIM}.

 A necessary condition for the sequence $\{\reg(J(m,s)^{(t)})\}_{t\in\N}$ to be subbaditive is
 \[
\reg(J(m,s)^{(2t_1+2t_2+2)}) \leq\reg(J(m,s)^{(2t_1+1)})+ \reg(J(m,s)^{(2t_2+1)})
 \]
 which can be written equivalently as
 \[
 m(s+1)\leq 2m+2s-2 \text{ or } (m-2)(s-1)\leq 0.
 \]
 It is thus evident that the regularity sequence for the symbolic powers of $J(m,s)$ is not subadditive whenever $m\geq 2$ and $s\geq 1$ but $(m,s)\neq(2,1)$. 
 \end{ex}

\section{Inverse systems of differentially closed graded filtrations}\label{sec:InverseSystems}

One situation in which the duality between subadditive sequences and superadditive sequences naturally arises is in the theory of inverse systems.  In this section we extend a construction using inverse systems from an influential paper of Emsalem and Iarrabino~\cite{EI95}; two sequences naturally associated to this construction exhibit the duality of Section~\ref{s:sequences}.  We begin by recalling some details about contraction and differentiation, following the survey of Geramita~\cite[Lecture~9]{Ger95}.

\subsection{Contraction, differentiation, and inverse systems}
Let $\kk$ be a field and $R=\kk[x_0,\ldots,x_N]$.  We use a standard shorthand for monomials -- if $\mathbf{a}=(a_0,\ldots,a_N)\in \Z^{N+1}_{\ge 0}$, then $x^\mathbf{a}=x_0^{a_0}\cdots x_N^{a_N}$ is the corresponding monomial in $R$.  We define $\mathcal{D}=\bigoplus_{i\geq 0} {\rm Hom}(R_i,\kk)$, the graded $\kk$-dual of $R$.  If $x^\mathbf{a}$ is in $R_d$, we write $Y^{[\mathbf{a}]}$ for the functional (in $\mathcal{D}_d$) on $R_d$ which sends $x^{\mathbf{a}}$ to $1$ and all other monomials in $R_d$ to $0$.  As a vector space, $\mathcal{D}$ is isomorphic to a polynomial ring in $N+1$ variables.  However, as we recall shortly, $\mathcal{D}$ has the structure of a \textit{divided power algebra}.  For this reason, we call $Y^{[\mathbf{a}]}$ a \textit{divided} monomial.

The ring $R$ acts on $\mathcal{D}$ by \textit{contraction}, which we denote by $\contract$.  That is, if $x^{\mathbf{a}}$ is a monomial in $R$ and $Y^{[\mathbf{b}]}$ is a divided monomial in $\mathcal{D}$, then
\[
x^{\mathbf{a}}\contract Y^\mathbf{b}=Y^{[\mathbf{b}-\mathbf{a}]} \text{ if } \mathbf{b} \ge \mathbf{a},
\]
and $0$ otherwise.  This action is extended linearly to all of $R$ and $\mathcal{D}$.  This action of $R$ on $\mathcal{D}$ gives a perfect pairing of vector spaces $R_d\times \mathcal{D}_d \to \kk$ for any degree $d\ge 0$.  Suppose $U$ is a subspace of $R_d$.  We define
\[
U^{\perp}=\{g\in \mathcal{D}_d: f\contract g=0 \mbox{ for all } f\in U\}.
\]
Macaulay \cite{Macaulay} introduced the {\em inverse system} of an ideal $I$ of $R$ to be
\[
I^{-1}:= \mbox{Ann}_S(I)=\{g\in \mathcal{D}: f\contract g=0 \mbox{ for all } f\in I\}.
\]
If $I$ is a homogeneous ideal of $R$ then the inverse system $I^{-1}$ can be constructed degree by degree using the identification $(I^{-1})_d=I_d^{\perp}$~\cite[Proposition~2.5]{Ger95}.  In general, $I^{-1}$ is an $R$-submodule of $\mathcal{D}$ which is finitely generated if and only if $I$ is an Artinian ideal.

A priori, $\mathcal{D}$ is simply a graded $R$-module.  However, $\mathcal{D}$ can be equipped with a multiplication which makes it into a ring, called the divided power algebra.  Suppose $\mathbf{a}=(a_0,\ldots,a_N),\mathbf{b}=(b_0,\ldots,b_N)\in \Z^{N+1}_{\ge 0}$.  The multiplication in $\mathcal{D}$ is defined on monomials by
\begin{equation}\label{eq:dividedmultiplication}
Y^{[\mathbf{a}]}Y^{[\mathbf{b}]}= {\mathbf{a}+\mathbf{b} \choose \mathbf{a} } Y^{[\mathbf{a+b}]},
\end{equation}
where
\begin{equation}\label{eq:multifactorial}
\mathbf{a}!=\prod_{i=0}^N a_i \quad\mbox{and}\quad \binom{\mathbf{a}+\mathbf{b}}{\mathbf{a}}=\prod_{i=0}^N \binom{a_i+b_i}{a_i}.
\end{equation}
This multiplication is extended linearly to all of $\mathcal{D}$.  Let $\mathbf{e}_i$ be the $i$th standard basis vector in $\Z^{N+1}$ and put $Y_i:=Y^{\mathbf{e_i}}$.  We see from the above definition that if $\mathbf{a}=(a_0,\ldots,a_N)$ then $Y^{[\mathbf{a}]}=\prod_{i=0}^N Y_i^{[a_i]}$.  Now set $Y^{\mathbf{a}}=\prod_{i=0}^N Y_i^{a_i}$, where the multiplication occurs in the divided power algebra as defined above.  From the above definition, one can deduce that
\begin{equation}\label{eq:regularmonomialtodividedmonomial}
Y^{\mathbf{a}}=\mathbf{a}! Y^{[\mathbf{a}]}.
\end{equation}
See~\cite[Lecture~9]{Ger95} for additional details.  In characteristic zero, $\mathbf{a}!$ never vanishes and so $\mathcal{D}$ is generated as an algebra by $Y_0,\ldots,Y_N$, just like the polynomial ring.  However, in charateristic $p$, $\mathcal{D}$ is infinitely generated by all the divided power monomials $Y_j^{[p^{k_i}]}$ for all $j=0,\ldots ,N$ and $k_j\ge 0$. The proof of this fact follows from Lucas' identity: given base $p$ expansions $a=\sum a_ip^i$ and $b=\sum b_ip^i$ for $a, b\in\N$,  then
\[
{b \choose a} = \prod_{i=0}^\infty {b_i \choose a_i} \mod p.
\]
In particular, for any $\mathbf{a}=(a_0,\ldots,a_N)$ where $a_j=\sum a_{ij}p^i$, we have
\[
Y^{[\mathbf{a}]}=\prod_{j=0}^N \prod_{i} (Y_j^{[p^i]})^{a_{ij}}.
\]

We now revisit the characteristic zero case.  Suppose $\kk$ is a field of characteristic zero and let $S=\kk[y_0,\ldots,y_N]$ be a polynomial ring.  Consider the action of $R$ on $S$ by partial differentiation, which we represent by `$\circ$'.  That is, if $\mathbf{a}=(a_0,\ldots,a_N)\in \Z^{N+1}_{\ge 0}$, $x^\mathbf{a}=x_0^{a_0}\cdots x_N^{a_N}$ is a monomial in $R$, and $g\in S$, we write
\[
x^{\mathbf{a}}\circ g=\frac{\partial^{\mathbf{a}}g}{\partial x^{\mathbf{a}}}
\]
for the action of $x^{\mathbf{a}}$ on $g$ (extended linearly to all of $R$).  In particular, if $\mathbf{a}\le \mathbf{b}$, then
\[
x^{\mathbf{a}}\circ y^{\mathbf{b}}=\frac{\mathbf{b}!}{(\mathbf{b}-\mathbf{a})!}y^{\mathbf{b}-\mathbf{a}},
\]
where we  use~\eqref{eq:multifactorial}.  This action gives a perfect pairing $R_d\times S_d\to \kk$, and, given a homogeneous ideal $I\subset R$, we define $I_d^{\perp}$ and $I^{-1}$ in the same way as we do for contraction.

Since we are in characteristic zero, the map of rings $\Phi:S\to\mathcal{D}$ defined by $\Phi(y_i)=Y_i$ extends to all monomials via~\eqref{eq:regularmonomialtodividedmonomial} to give $\Phi(y^\mathbf{a})=Y^{\mathbf{a}}=\mathbf{a}! Y^{[\mathbf{a}]}$.  Thus $S$ and $\mathcal{D}$ are isomorphic.  Moreover, if $F\in R$ and $g\in S$, then $\Phi(F\circ g)=F\contract\Phi(g)$~\cite[Theorem~9.5]{Ger95}, so $S$ and $\mathcal{D}$ are isomorphic as $R$-modules.

\subsection{Differential operators and differentially closed filtrations}
If $\ba=(a_0,\ldots,a_N)\in\Z^{N+1}_{\ge 0}$, we extend our convention on monomials to differential operators,  letting $\frac{\partial^{\ba}}{\partial x^\ba}=\frac{\partial^{a_0}}{\partial x_0^{a_0}} \cdots \frac{\partial^{a_N}}{\partial x_N^{a_N}}$.  Independent of characteristic, the ring of $\kk$-linear differential operators $D_R$, which acts on $R$, can be written as $D_R=\cup_{n\in \mathbb{N}}D^n_R$, where
\[
D^n_R=R \left< \frac{1}{\ba!} \frac{\partial^\ba}{\partial x^{\ba}} ~\Big|~ |\ba| \leq n \right>,
\]
where we use the convention~\eqref{eq:multifactorial} for $\ba!$.  See~\cite[Remark~2.7]{DDSGHN2018}.  For simplicity, we will write $D_{\ba} = \frac{1}{\ba!} \frac{\partial^{\ba}}{\partial x^\ba}$. The factors $\frac{1}{a_i!}$ appearing in $\frac{1}{\ba!}$ do not represent elements in the field; $D_{\ba}$ is a formal representation for the $\kk$-linear operator defined by $D_{\ba}(x^\bb) = {\bb \choose \ba} x^{\bb-\ba}$ if $\bb\geq \ba$, and otherwise $D_{\ba}(x^\bb) = 0$. Note that in characteristic $0$, $\ba! D_{\ba}=  \frac{\partial^{\ba}}{\partial x^{\ba}}$ is the usual partial differential operator.  Thus in characteristic $0$, $D_R$ is generated as an $R$-algebra by either $D_{\be_i}$ for $i=0,\ldots,N$ (where $\be_i$ is the $i$th standard basis vector) or by $\frac{\partial}{\partial x_i}$ for $i=0,\ldots,N$.

In characteristic $p$, using Lucas' identity as in the divided power ring, one can show that if $\ba = (a_0,\ldots ,a_N)$ where $a_j = \sum a_{ij}p^i$, then $D_\ba = \prod_{j=0}^N \prod_i (D_{p^i\mathbf{e_j}})^{a _{ij}}$ where the product just means the composition of the operators.  This computation shows that $D_R$ is generated as an $R$-algebra by $\{D_{p^i\mathbf{e_j}}: 0\leq j\leq N, 0\le i\}$.

\begin{defn}\label{def:differentiallyclosedfiltration}
Suppose $R=\kk[x_0,\ldots,x_N]$ and let $\I=\{I_n\}_{n\ge 1}$ be a filtration of ideals.  We say that $\I$ is \textbf{differentially closed} if, for every $n\ge 0$, every $D_\ba\in D^{n-1}_R$, and every $F\in I_{n}$, $D_{\ba}F\in I_{n-|\ba|}$.
\end{defn}

The following two lemmas follow immediately from our discussion of the $R$-algebra generators of $D_R$.

\begin{lem}\label{lem:DiffClosedChar0}
Suppose $R=\kk[x_0,\ldots,x_N]$, where $\kk$ has characteristic zero, and $\I=\{I_n\}_{n\ge 1}$ is a filtration of ideals so that for every $n\ge 1$ and every $F\in I_{n+1}$, $\frac{\partial F}{\partial x_i}\in I_n$ for $i=0,\ldots,N$.  Then $\I$ is differentially closed.
\end{lem}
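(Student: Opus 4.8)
The plan is to reduce the general statement of \Cref{def:differentiallyclosedfiltration} to the one-step hypothesis on first partials by induction on the order $|\ba|$ of the differential operator. Recall that in characteristic zero the ring $D_R$ is generated as an $R$-algebra by the first-order operators $\frac{\partial}{\partial x_0},\ldots,\frac{\partial}{\partial x_N}$, so that any $D_\ba$ with $|\ba|=k$ is an $R$-linear combination of $k$-fold compositions $\frac{\partial}{\partial x_{i_1}}\circ\cdots\circ\frac{\partial}{\partial x_{i_k}}$ (up to the harmless scalar $\frac{1}{\ba!}$, which lies in $\kk$ and hence does not affect ideal membership).

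First I would observe that it suffices to show: for every $n\ge 1$, every $F\in I_n$, and every $k$ with $0\le k\le n-1$, every $k$-fold composition $\frac{\partial}{\partial x_{i_1}}\cdots\frac{\partial}{\partial x_{i_k}}F$ lies in $I_{n-k}$. Indeed, since $I_{n-k}$ is an ideal, once all such compositions land in $I_{n-k}$ so does every $R$-linear combination of them, and therefore $D_\ba F\in I_{n-|\ba|}$ for every $D_\ba\in D^{n-1}_R$, which is exactly \Cref{def:differentiallyclosedfiltration}. I would prove the displayed claim by induction on $k$. The base case $k=0$ is trivial since $F\in I_n$. For the inductive step, write $\frac{\partial}{\partial x_{i_1}}\cdots\frac{\partial}{\partial x_{i_k}}F = \frac{\partial}{\partial x_{i_1}}\bigl(\frac{\partial}{\partial x_{i_2}}\cdots\frac{\partial}{\partial x_{i_k}}F\bigr)$; by the inductive hypothesis the inner $(k-1)$-fold composition $G:=\frac{\partial}{\partial x_{i_2}}\cdots\frac{\partial}{\partial x_{i_k}}F$ lies in $I_{n-(k-1)}=I_{(n-k)+1}$, and since $n-k\ge 0$, i.e. $(n-k)+1\ge 1$, the one-step hypothesis of the lemma applies to $G\in I_{(n-k)+1}$ and gives $\frac{\partial G}{\partial x_{i_1}}\in I_{n-k}$, completing the induction.

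The argument is essentially bookkeeping, so there is no real obstacle; the only point requiring a moment's care is making sure the index ranges line up—specifically that when we peel off one derivative from an operator of order $k\le n-1$, the inner operator has order $k-1\le n-2$ and is applied to an element of $I_n$, landing it in $I_{n-k+1}$ with $n-k+1\ge 2\ge 1$, so the hypothesis "for every $n\ge 1$ and every $F\in I_{n+1}$" is genuinely applicable at each stage. I would also remark that the characteristic-zero hypothesis enters only through the fact that $D_R$ is generated in order one (equivalently, that $\ba!$ is invertible), which is precisely what fails in characteristic $p$ and motivates the companion lemma for positive characteristic; this is the "discussion of the $R$-algebra generators of $D_R$" the excerpt refers to, so the proof can legitimately be stated as "immediate from that discussion together with the above induction."
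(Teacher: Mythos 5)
Your proof is correct and follows exactly the reasoning the paper has in mind: the paper's one-line justification ("follows immediately from our discussion of the $R$-algebra generators of $D_R$") is precisely the observation that in characteristic zero $D_R$ is generated over $R$ by the first partials, so the one-step hypothesis propagates to all $D_{\ba}$ by the induction on $|\ba|$ that you spell out. Your care with the index bookkeeping (checking that $n-k\ge 1$ so the hypothesis applies at each stage, and that $1/\ba!\in\kk^\times$ so $D_{\ba}F$ and $\partial^{\ba}F/\partial x^{\ba}$ generate the same ideal) is exactly what is needed; the paper simply leaves these details to the reader.
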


\begin{lem}\label{lem:p^iclosedisenough}\label{lem:DiffClosedCharP}
Suppose $R=\kk[x_0,\ldots,x_N]$, where $\kk$ has characteristic $p>0$, and $\I=\{I_n\}_{n\ge 1}$ is a filtration of ideals so that for every $i\in \N$, $n\ge 1+p^i$ and every $F\in I_{n}$, $D_{p^i\mathbf{e_j}}F\in I_{n-p^i}$ for $j=0,\ldots,N$, where $\mathbf{e_j}$ is the $j$ th standard basis vector of $\Z^{N+1}$.  Then $\I$ is differentially closed.
\end{lem}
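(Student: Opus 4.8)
The plan is to verify the defining property of \Cref{def:differentiallyclosedfiltration} head-on: reduce an arbitrary monomial operator $D_\ba$ to a composition of the ``basic'' operators $D_{p^i\be_j}$ that occur in the hypothesis, and then push that composition through the hypothesis one factor at a time, keeping track of which member $I_m$ of the filtration the image lands in. The only subtlety to watch is that the hypothesis is asserted only when the ambient index is at least $1+p^i$, so the argument must certify that this threshold is never violated as the basic operators are peeled off.

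Concretely, I would fix $n\ge 0$, a multi-index $\ba$ with $|\ba|\le n-1$ (so that $D_\ba\in D^{n-1}_R$), and $F\in I_n$; the goal is $D_\ba F\in I_{n-|\ba|}$. Writing the base-$p$ expansions $a_j=\sum_{i\ge 0}a_{ij}p^i$ with $0\le a_{ij}\le p-1$, the factorization recalled just above the lemma expresses $D_\ba$ (up to a nonzero scalar in $\kk$, which is harmless since each $I_m$ is a $\kk$-subspace) as the composite $\prod_{j=0}^N\prod_{i\ge 0}(D_{p^i\be_j})^{a_{ij}}$. I would regard this as a finite list of basic operators $\delta_1,\ldots,\delta_k$, composed in order, in which the operator $D_{p^i\be_j}$ occurs $a_{ij}$ times; the crucial numerical fact is that the ``degree drops'' of the $\delta_\ell$ sum to $\sum_{i,j}a_{ij}p^i=\sum_j a_j=|\ba|$, so in particular every partial sum of these drops is at most $|\ba|\le n-1$.

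Next I would apply the $\delta_\ell$ to $F$ starting from the innermost factor, proving by induction on the number of factors applied that after applying $\delta_k,\ldots,\delta_{\ell+1}$ the result lies in $I_{n-s_\ell}$, where $s_\ell$ denotes the sum of the degree drops of $\delta_{\ell+1},\ldots,\delta_k$ (so $s_k=0$, giving the base case $F\in I_n$). For the inductive step, write $\delta_\ell=D_{p^i\be_j}$; since $s_\ell+p^i=s_{\ell-1}$ is again a partial sum of the degree drops, it is $\le|\ba|\le n-1$, hence $n-s_\ell\ge 1+p^i$ and the hypothesis applies to the element $\delta_{\ell+1}\cdots\delta_k F\in I_{n-s_\ell}$, yielding $\delta_\ell(\delta_{\ell+1}\cdots\delta_k F)\in I_{n-s_\ell-p^i}=I_{n-s_{\ell-1}}$. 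Iterating down to $\ell=1$ lands $D_\ba F$ (up to the scalar) in $I_{n-|\ba|}$, which is the assertion.

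I do not expect a genuine obstacle here; the content is entirely in the index arithmetic, and the one delicate point is precisely the threshold $n\ge 1+p^i$ in the hypothesis, which is handled by the observation that every partial sum of degree drops is bounded by $|\ba|\le n-1$, so the ambient index stays in range throughout. I would also flag the minor point that the argument invokes the hypothesis for $i=0$ as well, namely for the first-order operators $D_{\be_j}$ attached to the units digits $a_{0j}$, so the hypothesis is meant with $i$ ranging over all of $\Z_{\ge 0}$; with that reading, \Cref{lem:DiffClosedChar0} is recovered as the degenerate instance in which $p$ is replaced by $1$ and the only basic operators are the $\partial/\partial x_j$.
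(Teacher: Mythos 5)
Your proof is correct and follows exactly the route the paper intends: decompose $D_\ba$ via base-$p$ digits into a composition of the generators $D_{p^i\be_j}$ and peel them off one at a time, with the partial-sum bookkeeping ensuring the threshold $n'\ge 1+p^i$ holds at each step. You have also rightly flagged two small imprecisions in the source: the factorization $D_\ba=\prod_{j,i}(D_{p^i\be_j})^{a_{ij}}$ is correct only up to the nonzero scalar $\prod_{j,i}a_{ij}!$ (harmless, as you say), and the hypothesis must be read with $i$ ranging over $\Z_{\ge 0}$ rather than the paper's $\N=\{1,2,\ldots\}$, since the first-order operators $D_{\be_j}$ are indispensable.
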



\begin{ex}\label{ex:SymbolicPowersDiffClosed}
Let $I\subset R=\kk[x_0,\ldots,x_N]$ be a homogeneous ideal.  The $n$th \textit{differential power} of $I$ is
\[
I^{<n>}=\{f\in R\mid D_\ba (f)\in I \mbox{ for all } D_\ba\in D^{n-1}_R\}.
\]
Every differential power of $I$ is an ideal by~\cite[Proposition~2.4]{DDSGHN2018}.  The family $\cI=\{I^{<n>}\}_{n=1}^\infty$ is clearly a differentially closed graded filtration of ideals.
	
If $I=I(X)$ is the ideal of a projective variety $X\subset\P^N$ for $\kk$ characteristic $0$ or a radical ideal for $\kk$ a perfect field, the Zariski-Nagata theorem~\cite{Zariski-1949,Nagata59} and its extension to perfect fields~\cite[Proposition~2.14]{DDSGHN2018} states that the symbolic powers and differential powers of $I$ coincide, that is $I^{(n)}=I^{<n>}$ for $n\ge 1$.  In either case, $\I=\{I^{(n)}\}_{n\ge 1}$ is a differentially closed graded filtration.  We will see in \Cref{ex:ZariskiMainLemmaHolomorphic} that, by using Zariski's main lemma on holomorphic functions~\cite{Zariski-1949} instead of the Zariski-Nagata theorem, we can drop the assumption that $\kk$ is perfect.
\end{ex}

\begin{ex}\label{ex:PowersDiffClosed}
Suppose $R=\kk[x_0,\ldots,x_N]$, $I\subset R$ is any ideal, and $\I=\{I^{n}\}_{n\ge 1}$ is the graded filtration consisting of powers of $I$.  We prove that $\I$ is differentially closed.

In characteristic $0$, $\I$ is differentially closed by \Cref{lem:DiffClosedChar0} and the product rule.
To prove that $\I=\{I^n\}$ is a differentially closed graded filtration in arbitrary characteristic, it suffices by \Cref{lem:DiffClosedCharP} to prove that
\begin{equation}\label{eq:powerscharp}
\mbox{if }f\in I^n \mbox{ then } D_{k\be_i}(f)\in I^{n-k} \mbox{ for } k\le n-1.
\end{equation}
We prove this using the following extension of the product rule for differential  operators of the form $D_{k\be_i}$:  for any $f,g\in R$
\begin{equation}\label{eq:higherproductrule}
D_{k\be_i}(fg)=\sum_{j=0}^k D_{j\be_i}(f)D_{(k-j)\be_i}(g).
\end{equation}
We include a proof of this identity in \Cref{app:formulas}.  From \eqref{eq:higherproductrule} an induction yields
\begin{equation}\label{eq:higherproductrulemany}
D_{k\be_i}(f_1\cdots f_n)=\sum_{j_1+\cdots+j_n=k} D_{j_1\be_i}(f_1)D_{j_2\be_i}\cdots D_{j_n\be_i}(f_n),
\end{equation}
where the sum runs over non-negative integers $j_1,\ldots,j_n$.  To prove~\eqref{eq:powerscharp} it suffices, by linearity, to prove it in the case $f=f_1\cdots f_n$, where $f_i\in I$ for $i=1,\ldots,n$.  Since $k<n$, at least $n-k$ of the indices $j_1,\ldots,j_n$ are zero.  Thus  each term in~\eqref{eq:higherproductrulemany} is a product that includes at least $n-k$ factors in $I$, and so each term is in $I^{n-k}$.  This proves that $\I=\{I^n\}_{n\ge 1}$ is differentially closed in arbitrary characteristic.

\end{ex}

\begin{ex}
\label{ex:FrobeniusPowers}
Suppose $I\subset \kk[x_0,\ldots,x_N]$ is an ideal and $\kk$ has characteristic $p$.  If $q=p^e$ for some integer $e\ge 0$ then the $q$th \textit{Frobenius power} of $I$ is the ideal
\[
I^{[q]}=\langle f^{q}: f\in I\rangle.
\]
In~\cite{Hernandez-Teixera-Witt-2020}, Hern\'{a}ndez, Teixera, and Witt introduce \textit{integral} Frobenius powers
\[
I^{[n]}=I^{n_0}I^{n_1[p]}\cdots I^{n_s[p^s]},
\]
where $n$ has base $p$ expansion $n=n_0+n_1p+\cdots+n_sp^s$ and $I^{a[q]}=(I^a)^{[q]}=(I^{[q]})^a$.  Let $\I=\{I^{[n]}\}$.  We show that $\I$ is a differentially closed filtration.

First we show that if $q=p^t$ (for any integer $t\ge 0$) then differential operators of order not divisible by $q$ vanish on $I^{[q]}$.  By \Cref{lem:DiffClosedCharP} it suffices to show that $D_{k\be_i}(f)=0$ when $q\nmid k$ and $f \in I^{[q]}$.
To this end, suppose that $f=g^{q}$ for $q=p^t$ and $k$ is a positive integer so that $q\nmid k$.  Since $f$ is a linear combination of $q$th powers of monomials, it suffices to show that $D_{k\be_i}(f)=0$ when $f$ is a monomial.  So suppose that $f=x^{q\ba}$, where $\ba=(a_0,\ldots,a_N)$.  Then $D_{k\be_i}(f)=D_{k\be_i}(x^{q\ba})=\binom{q\alpha_i}{k}x^{q\ba-k\be_i}$.  Since $q$ does not divide $k$ the base $p$ expansion $k=\sum_{i\ge 0}k_ip^i$ satisfies $k_u\neq 0$ for some $u<t$.  On the other hand, the base $p$ expansion $qa_i=\sum_{j\ge 0}a_{ij}p^j$ satisfies $a_{ij}=0$ for all $j<t$.  In particular, $a_{iu}=0$.  By Lucas' identity, $\binom{qa_i}{k}=0$ and we are done.  Notice that since $I^{a[q]}=(I^a)^{[q]}$, this also shows that differential operators of order not divisible by $q$ vanish on $I^{a[q]}$ for any $a\ge 1$.

Next we show that, if $1\le k\le a$ and $f\in I^{a[q]}$, then $D_{kq\be_i}(f)\in I^{(a-k)[q]}$ (where we take $I^0=R$ by convention).  It suffices by linearity to consider the case $f=f_1\cdots f_a\in I^{a[q]}$, where $f_1,\ldots,f_a\in I^{[q]}$.  By~\eqref{eq:higherproductrulemany},
\[
D_{k\be_i}(f_1\cdots f_a)=\sum_{j_1+\cdots+j_a=kq} D_{j_1\be_i}(f_1)D_{j_2\be_i}(f_2)\cdots D_{j_a\be_i}(f_a),
\]
and by the previous discussion we may assume that in the sum above $j_1,\ldots,j_a$ are all divisible by $q$.  Hence in each term of the sum above there are $a-k$ factors which are in $I^{[q]}$, thus the entire sum is in $I^{(a-k)[q]}$.

Finally, we induct on the length of the base $p$ exansion of $n$ to show that $D_{j\be_i}(I^{[n]})\subset I^{[n-j]}$ for $j<n$.  If $n<p$ then integral Frobenius powers agree with regular powers and the result follows from Example~\ref{ex:PowersDiffClosed}.  So suppose that $n\ge p$ with base $p$ expansion $n=n_0+n_1p+\cdots+n_sp^s$.  Put $n'=n-n_sp^s$.  Clearly the base $p$ expansion of $n'$ has length at least one less than the base $p$ expansion of $n$.  By definition, $I^{[n]}=I^{[n']}I^{n_s[p^s]}$, so it suffices to show that $D_{j\be_i}(fg)\in I^{[n-j]}$ where $f\in I^{[n']}$ and $g\in I^{n_s[p^s]}$.  Put $q=p^s$ and suppose $j=aq+r$ where $0\le r<q$ (we must have $a\le n_s$ since $j<n$). By~\eqref{eq:higherproductrule},
\[
D_{j\be_i}(fg)=\sum_{m=0}^j D_{m\be_i}(f)D_{(j-m)\be_i}(g)=D_{r\be_i}(f)D_{aq\be_i}(g),
\]
since all differential operators of order not divisible by $q$ vanish on $g$ and all differential operators of order at least $q$ vanish on $f$. By induction, $D_{r\be_i}(f)\in I^{[n'-r]}$.  By the previous argument, $D_{aq\be_i}(g)\in I^{(n_k-a)[q]}$.  Since there are no base $p$ carries in the addition $(n'-r)+(n_s-a)q$, we have $I^{[n'-r]}I^{(n_s-a)[q]}=I^{[n'+n_sq-(aq+r)]}=I^{[n-j]}$ by~\cite[Proposition~3.4]{Hernandez-Teixera-Witt-2020}.  By Lemma~\ref{lem:DiffClosedCharP}, this completes the proof that the integral Frobenius powers of an ideal form a differentially closed filtration.
\end{ex}

\subsection{The inverse system of a differentially closed filtration}

Emsalem and Iarrobino made a remarkable observation in~\cite{EI95}: even though the inverse system of an ideal is not finitely generated, one could put together the graded pieces of the inverse systems of successive symbolic powers of an ideal to get an ideal of $S$ or $\mathcal{D}$, respectively.  We show that this observation of Emsalem and Iarrobino can be extended to a differentially closed graded filtration of ideals, using the following definition.

\begin{defn}\label{def:LTransform}
Suppose that $\I=\{I_n\}_{n\in\N}$ is a filtration of homogeneous ideals.  For each integer $s\ge 1$ we define
\[
\L^s(\cI):=\bigoplus_{d\ge s+1} \left(I_{d-s}^{-1}\right)_d=\bigoplus_{d\ge s+1} \left(I_{d-s}^{\perp}\right)_d.
\]
If the graded filtration $\I$ is understood, we write $\L^s$ instead of $\L^s(\I)$.  If the inverse system is computed using the partial differentiation action of $R$ on $S$, $\L^s(\I)$ is a subspace of $S$, while if the inverse system is computed using the contraction action of $R$ on $\mathcal{D}$, $\L^s(\I)$ is a subspace of $\mathcal{D}$.
\end{defn}

If $\I=\{I_n\}_{n\in\N}$ is a graded family of ideals of $R$, we have defined $\L^s(\I)$ so that
\[
\left(I^{-1}_n\right)_d= \L^{d-n}(\I)_d
\]
and hence
\[
\left(I_n\right)_d\cong\left( \frac{S}{\L^{d-n}(\I)}\right)_d
\]
or equivalently
\begin{equation}
\label{eq:SmodLvsI}
\left( \frac{S}{\L^{s}(\I)}\right)_d\cong (I_{d-s})_d.
\end{equation}

\begin{ex}\label{ex:LsForPointIdeals}
	Suppose $p=[a_0:\cdots:a_N]\in\P^N$, let $\fm_p\subset R=\kk[x_0,\ldots,x_N]$ be the ideal of homogeneous polynomials vanishing at $p$, and put $\I=\{\fm_p^n\}_{n\ge 1}$.  According to~\Cref{ex:PowersDiffClosed} this is a differentially closed graded filtration.
	
	For the action of $R$ on $S$ by partial differentiation, let $L_p=a_0y_0+\ldots+a_Ny_N\in S$ be the dual linear form of the point $p\in X$.  It follows from~\Cref{def:LTransform} and~\Cref{lem:OnePointInverseSystem} that 
	\[
	\cL^s(\I)=\bigoplus_{d\ge 1} (\fm_p^d)^\perp_{d+s}=\bigoplus_{d\ge 1} \langle L_p^{s+1}\rangle_{d+s}=\langle L_p^{s+1} \rangle.
	\]
	For the action of $R$ on $\mathcal{D}$ by contraction, let $L_p=a_0Y_0+\cdots+a_NY_N\in\mathcal{D}$ be the dual linear form of $p$ and put $L^{[k]}_p=\sum_{|\bb|=k} a_0^{b_0}\cdots a_N^{b_N} Y^{[\bb]}$.  It follows from ~\Cref{def:LTransform} and~\Cref{lem:OnePointInverseSystem} that
	\[
	\begin{array}{rl}
		\cL^s(\I) & =\bigoplus_{d\ge 1} (\fm_p^d)^\perp_{d+s}\\[10 pt]
		&=\bigoplus_{d\ge 1} {\rm span}\{ Y^{[\ba]}L_p^{[c]}: s+1\le c \le d+s, |\ba|=d-c \}\\[10 pt]
		&=\langle L_p^{[c]}: c\ge s+1\rangle.
	\end{array}
	\]
	Note that $\cL^s(\I)$ is an ideal of $S$ (respectively $\mathcal{D}$), although it is not a finitely generated ideal of $\mathcal{D}$ in positive characteristic.
\end{ex}

Our main result in this section is that $\cL^s$ is an ideal of $\mathcal{D}$ (or $S$) precisely when $\I$ is a differentially closed filtration of homogeneous ideals.

\begin{thm}\label{thm:differentiallyclosed}
Suppose $R=\kk[x_0,\ldots,x_N]$ and let $\I=\{I_n\}_{n\ge 1}$ be a filtration of homogeneous ideals.  Then $\cL^s(\I)$ is an ideal of $\mathcal{D}$ (arbitrary characteristic) or $S$ (characteristic $0$) if and only if $\I$ is differentially closed.
\end{thm}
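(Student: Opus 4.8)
The plan is to prove both implications by working degree by degree and translating the ideal property of $\cL^s$ into a differential-closure property via the perfect pairing between $R_d$ and $\mathcal{D}_d$. Recall that $\cL^s(\I) = \bigoplus_{d\ge s+1} (I_{d-s}^\perp)_d$. Being an ideal of $\mathcal{D}$ means being closed under multiplication by each $Y_j$ (in the divided power algebra) and, in positive characteristic, more to the point, under multiplication by each divided power $Y_j^{[p^k]}$ — since these generate $\mathcal{D}$ as an algebra (together with $R$ acting by contraction, but recall $\mathcal{D}$ is an $R$-module regardless, so we only need closure under the ring multiplication). So the statement ``$\cL^s$ is an ideal'' unwinds to: for every $d \ge s+1$, every $g \in (I_{d-s}^\perp)_d$, and every generator $Y^{[\bb]}$ of $\mathcal{D}$ in degree $k$, we have $Y^{[\bb]} g \in (I_{d+k-s}^\perp)_{d+k}$, i.e. $Y^{[\bb]}g$ is apolar to $I_{d+k-s}$.

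The key computational bridge is an adjunction-type identity relating multiplication in $\mathcal{D}$ to differential operators acting on $R$. Specifically, for $F \in R_{d+k}$, $g \in \mathcal{D}_d$, and a divided monomial $Y^{[\bb]}$ with $|\bb| = k$, one should have
\[
F \contract (Y^{[\bb]} g) = (D_{\bb} F) \contract g,
\]
where $D_{\bb} = \frac{1}{\bb!}\frac{\partial^\bb}{\partial x^\bb}$ is the divided-power differential operator introduced before Definition~\ref{def:differentiallyclosedfiltration}. This is because contraction by $x^\ba$ and multiplication by $Y^{[\bb]}$ interact through the binomial coefficients in~\eqref{eq:dividedmultiplication}, and these are exactly the coefficients defining $D_\bb$. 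I would state and verify this identity first (it reduces to a monomial computation using the definitions of $\contract$, the divided-power multiplication, and $D_\bb(x^\bb) = \binom{\bb}{\ba}x^{\bb-\ba}$); in characteristic zero the analogous statement $\Phi(F \circ g)$ versus $F \contract \Phi(g)$ is already quoted from~\cite[Theorem~9.5]{Ger95}, so the $S$ case follows by transport of structure.

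Granting the identity, both directions are now formal. For the ``if'' direction: assume $\I$ is differentially closed. Take $g \in (I_{d-s}^\perp)_d$ and $Y^{[\bb]}$ of degree $k$; I must show $Y^{[\bb]} g$ is apolar to $I_{d+k-s}$. For any $F \in (I_{d+k-s})_{d+k}$, differential closure (Definition~\ref{def:differentiallyclosedfiltration}, noting $D_\bb \in D_R^{k} \subseteq D_R^{d+k-s-1}$ once $d-s\ge 1$, which holds since $d \ge s+1$) gives $D_\bb F \in I_{(d+k-s)-k} = I_{d-s}$; since $D_\bb F$ lives in degree $d$ and $g \in I_{d-s}^\perp$ in degree $d$, we get $(D_\bb F)\contract g = 0$, hence $F\contract(Y^{[\bb]}g) = 0$. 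As this holds for all such $F$ and all algebra generators $Y^{[\bb]}$, and $\cL^s$ is clearly a graded subspace, $\cL^s$ is an ideal. For the ``only if'' direction I run this in reverse: assume $\cL^s$ is an ideal, take $F \in I_n$ homogeneous of degree $e$ (it suffices to treat homogeneous $F$) and $D_\bb \in D_R^{n-1}$ with $|\bb| = k \le n-1$; I want $D_\bb F \in I_{n-k}$. Using the perfect pairing it is enough to show $D_\bb F$ is apolar to nothing beyond what forces membership — more precisely, since the inverse system determines the ideal via $(I_{n-k})_e = ((I_{n-k}^{-1})_e)^\perp$ (degreewise double-perp, from $(I^{-1})_d = I_d^\perp$ and perfectness of the pairing), I show $(D_\bb F) \contract g = 0$ for all $g \in (I_{n-k}^{-1})_{e-k} = \cL^{(e-k)-(n-k)}(\I)_{e-k} = \cL^{e-n}(\I)_{e-k}$. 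For such $g$, $Y^{[\bb]} g \in \cL^{e-n}(\I)_e = (I_n^{-1})_e = (I_n)_e^\perp$ because $\cL^{e-n}$ is an ideal, so $(D_\bb F)\contract g = F \contract (Y^{[\bb]} g) = 0$ as $F \in (I_n)_e$. Hence $D_\bb F \in (I_{n-k})_e$, which is what differential closure requires. One has to be slightly careful that the relevant $\cL^t$ with $t = e - n$ is the right transform and that $t \ge 0$ (i.e. $e \ge n$, automatic since $F \in I_n \subseteq \fm^n$ forces $\deg F \ge n$), and that we only need closure under the algebra generators $Y^{[\bb]}$ with $\bb = p^i\be_j$ in characteristic $p$, matching exactly the generators $D_{p^i\be_j}$ of $D_R$ — this is why Lemma~\ref{lem:DiffClosedCharP} reduces differential closure to those operators.

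The main obstacle is pinning down the adjunction identity $F \contract (Y^{[\bb]} g) = (D_\bb F) \contract g$ with the correct normalization across all characteristics — the divided-power binomial coefficients must be reconciled with the coefficients in $D_\bb(x^\bb) = \binom{\bb}{\ba} x^{\bb - \ba}$, and one must check there is no discrepancy by a factor of $\bb!$ or similar (this is precisely where the divided-power structure, rather than the naive polynomial structure on $\mathcal{D}$, is essential, and where the characteristic-zero case genuinely differs since $S$ is an honest polynomial ring and the operators are honest partials). Once that identity is in hand with the right constants, everything else is a bookkeeping exercise in degree indices and double-perp, together with the already-established reductions to algebra generators (Lemmas~\ref{lem:DiffClosedChar0} and~\ref{lem:DiffClosedCharP}).
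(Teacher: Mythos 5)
Your proposal is correct and takes essentially the same route as the paper: both rest on the product rule relating multiplication by $Y_j^{[k]}$ in $\mathcal{D}$ to divided-power differential operators on $R$, combined with the reduction to algebra generators via Lemmas~\ref{lem:DiffClosedChar0} and~\ref{lem:DiffClosedCharP}, and the degreewise perfect pairing. Your single-term adjunction identity $F\contract(Y^{[\bb]}g)=(D_\bb F)\contract g$ (valid precisely when $\deg F=\deg g+|\bb|$) is exactly the degree-specific collapse that the paper performs inside the proof after invoking the full product rule of \Cref{lem:prodrule}, where all terms of $\sum_{i=0}^k Y_j^{[k-i]}(D_{i\be_j}(F)\contract g)$ except $i=k$ vanish for degree reasons.
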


In the proof of Theorem~\ref{thm:differentiallyclosed}, we will use the following formula which we expect is known to experts.  We give a proof of this identity (and others) in \Cref{app:formulas}.

\begin{lem}\label{lem:prodrule}
Suppose $F\in R$ is a homogeneous polynomial and $g\in \mathcal{D}$ is a homogeneous divided power polynomial.  In arbitrary characteristic,
\[
F\contract(Y^{[k]}_jg)=\sum_{i=0}^k Y_j^{[k-i]}(D_{i\mathbf{e_j}}(F)\contract g)
\]
for $j=0,\ldots,N$.
\end{lem}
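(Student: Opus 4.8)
The plan is to prove \Cref{lem:prodrule} by reducing to monomials and then to a single variable, exploiting bilinearity of the contraction action $\contract$ and divided-power multiplication, plus the defining rule $x^{\ba}\contract Y^{[\bb]}=Y^{[\bb-\ba]}$ when $\bb\ge\ba$ (and $0$ otherwise). By $\kk$-linearity in both $F$ and $g$, it suffices to verify the identity when $F=x^{\ba}$ is a monomial and $g=Y^{[\bb]}$ is a divided monomial. Because every operator and product appearing in the statement only involves the variables $x_j$ and $Y_j$ nontrivially (the other coordinates just come along multiplicatively), I expect to be able to factor the whole identity coordinate-by-coordinate and reduce to the one-variable statement: for $F=x_j^{a}$ and $g=Y_j^{[b]}$,
\[
x_j^a\contract\bigl(Y_j^{[k]}Y_j^{[b]}\bigr)=\sum_{i=0}^k Y_j^{[k-i]}\bigl(D_{i\mathbf{e_j}}(x_j^a)\contract Y_j^{[b]}\bigr).
\]

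The key computational steps are then: on the left-hand side use \eqref{eq:dividedmultiplication} to get $Y_j^{[k]}Y_j^{[b]}=\binom{k+b}{k}Y_j^{[k+b]}$, and then contract to obtain $\binom{k+b}{k}Y_j^{[k+b-a]}$ when $a\le k+b$ (zero otherwise). On the right-hand side, $D_{i\mathbf{e_j}}(x_j^a)=\binom{a}{i}x_j^{a-i}$, so the $i$-th summand is $\binom{a}{i}\bigl(x_j^{a-i}\contract Y_j^{[b]}\bigr)Y_j^{[k-i]}=\binom{a}{i}Y_j^{[b-(a-i)]}Y_j^{[k-i]}$ when $a-i\le b$, which by \eqref{eq:dividedmultiplication} equals $\binom{a}{i}\binom{k-i+b-a+i}{k-i}Y_j^{[k+b-a]}=\binom{a}{i}\binom{k+b-a}{k-i}Y_j^{[k+b-a]}$. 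So the identity reduces to the purely combinatorial claim $\sum_{i=0}^k \binom{a}{i}\binom{k+b-a}{k-i}=\binom{k+b}{k}$, which is the Vandermonde convolution. One must also check the degenerate cases (when $a>k+b$ both sides vanish; when $a-i>b$ the corresponding term on the right is zero, consistently with $\binom{k+b-a}{k-i}=0$ since then $k-i>k+b-a$), and these are handled by the standard convention that binomial coefficients with out-of-range arguments are zero — though in positive characteristic one should phrase the argument so that the Vandermonde identity is applied as an identity of integers first and then reduced mod $p$, which is legitimate since all the $\binom{\cdot}{\cdot}$ appearing are the integer binomial coefficients defining the operators $D_{\ba}$ and the product \eqref{eq:dividedmultiplication}.

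For the multivariable reduction itself, the cleanest route is to write $x^{\ba}=x_j^{a_j}\cdot x^{\ba'}$ where $\ba'$ has $j$-th coordinate $0$, and similarly split $Y^{[\bb]}$ and $Y^{[k\mathbf{e_j}]}$; since contraction, divided multiplication, and the operators $D_{i\mathbf{e_j}}$ all respect this tensor decomposition across coordinates (contraction and multiplication are "diagonal" on monomials by \eqref{eq:dividedmultiplication} and the defining contraction rule, and $D_{i\mathbf{e_j}}$ acts only in the $j$-th slot), the general identity follows from the one-variable case by multiplying through by the fixed factor coming from the other coordinates. The main obstacle I anticipate is purely bookkeeping: making the tensor/coordinate-splitting argument precise enough that the binomial coefficient $\binom{\ba+\bb}{\ba}=\prod_i\binom{a_i+b_i}{a_i}$ factors correctly and the "other coordinate" factors on the two sides visibly match — but there is no conceptual difficulty, since once one is down to one variable it is exactly Vandermonde. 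Given that \Cref{app:formulas} is cited as the home for the full proof, in the main text I would simply state this reduction and defer the one-variable Vandermonde computation to the appendix.
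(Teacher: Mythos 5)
Your proposal is correct and follows essentially the same route as the paper's appendix proof: reduce to monomials by bilinearity, peel off the coordinates other than $j$ (the paper does this in two sequential steps rather than a single tensor-factorization, but the content is identical), and reduce the one-variable case to the integer Vandermonde convolution $\sum_{i=0}^k \binom{m}{i}\binom{k+n-m}{k-i}=\binom{k+n}{k}$, with out-of-range binomial coefficients conventionally zero so the degenerate cases take care of themselves. Your remark about proving Vandermonde over $\Z$ first and then reducing mod $p$ is exactly the right thing to say in positive characteristic, and matches how the paper implicitly handles it.
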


\begin{proof}[Proof of Theorem~\ref{thm:differentiallyclosed}]
We prove the result for $R$ acting on $\mathcal{D}$ by contraction.  Put $\cL^s=\cL^s(\I)$.  Note that $\cL^s$ is an ideal of $\mathcal{D}$ if and only if $Y^{[\bb]}g\in \cL^s$ for every algebra generator $Y^{[\bb]}$ of $\mathcal{D}$.  In any characteristic, $\cL^s$ is an ideal if and only if $Y_j^{[k]}g\in \cL^s$ for every $j=0,\ldots,N$ and any $k\ge 1$ by \Cref{lem:DiffClosedChar0} and \Cref{lem:DiffClosedCharP}.  Since $\cL^s$ is clearly graded, we may assume $g$ is homogeneous, say of degree $d$.  By \Cref{def:LTransform}, $g\in \cL^s_d$ if and only if $g\in \left(I^{-1}_{d-s}\right)_d$.  It follows that $\cL^s$ is an ideal if and only if $Y_j^{[k]}g\in \cL^s_{d+k}=\left(I^{-1}_{d+k-s}\right)_{d+k}$ for all $d\ge s+1$,  $0\le j\le N$, $k\ge 1$, and $g\in \left(I^{-1}_{d-s}\right)_d$.
	
Fix a degree $d$ and an index $0\le j\le N$.  Then $Y_j^{[k]}g\in\left(I^{-1}_{d+k-s}\right)_{d+k}$ for all $g\in\left(I^{-1}_{d-s}\right)_d$ if and only if $F\contract(Y^{[k]}_jg)=0$ for every $F\in \left(I_{d+k-s}\right)_{d+k}$ and $g\in\left(I^{-1}_{d-s}\right)_{d}$.  By Lemma~\ref{lem:prodrule},
\[
\begin{array}{rl}
F\contract(Y_j^{[k]}g) & = D_{k\be_j}(F)\contract g + Y_j^{[1]}(D_{(k-1)\be_j}(F)\contract g)+\cdots+Y_j^{[k]}(F\contract g)\\
& =D_{k\be_j}(F)\contract g,
\end{array}
\]
where the final equality follows because $D_{t\be_j}(F)$ has degree at least $d+1$ for $0\le t\le k-1$ and $g$ has degree $d$.  It follows that $Y_j^{[k]}g\in\left(I^{-1}_{d+k-s}\right)_{d+k}$ if and only if $D_{k\be_j}(F)\contract g=0$ for all $F\in \left(I_{d+k-s}\right)_{d+k}$, which is to say $D_{k\be_j}(F)\in (I_{d-s})_d$  for all $F\in \left(I_{d+k-s}\right)_{d+k}$.  Thus $\I$ is differentially closed if and only if $\cL^s$ is an ideal.
%
%
%

If $R$ is acting on either $S$ or $\mathcal{D}$ in characteristic $0$, the proof can be simplified.  The use of \Cref{lem:prodrule} can be replaced by \Cref{lem:prodrulechar0} (for $S$) or \Cref{lem:prodruledivided} (for $\mathcal{D}$).
\end{proof}

\begin{rem}\label{rem:onlyHomogeneous}
Our interest is primarily in graded filtrations of homogeneous ideals, so we have stated \Cref{def:differentiallyclosedfiltration} and \Cref{thm:differentiallyclosed} for a \textit{filtration} of homogeneous ideals.  However, \Cref{def:differentiallyclosedfiltration} and \Cref{thm:differentiallyclosed} only use the hypothesis that $\I$ is a family of \textit{homogeneous} ideals.
\end{rem}

\subsection{Intersecting differentially closed graded filtrations}
In this section we describe how $\cL^s(\I)$ behaves under intersection of filtrations.  This will give us a number of additional examples of families of differentially closed graded filtrations.  Suppose $A$ is an index set and $\I_a=\{I_{a,n}\}_{n\in \N}$ is a filtration of ideals of $R$ for each $a\in A$.  We write $\cap_{a\in A} \I_a$ for the filtration $\{\cap_{a\in A} I_{a,n}\}_{n\in\N}$.

\begin{prop}\label{prop:IntersectionToSum}
	Suppose $A$ is an index set and $\I_a$ is a differentially closed graded filtrations of ideals for each $a\in A$.  Then
	\begin{enumerate}
		\item\label{ITS1} $\cap_{a\in A}\I_a$ is a differentially closed graded filtration of ideals and
		\item\label{ITS2} $\cL^s(\cap_{a\in A} \I_a)=\sum_{a\in A} \L^s(\I_a)$
	\end{enumerate}
\end{prop}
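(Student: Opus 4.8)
The plan is to establish~\eqref{ITS1} first, since it is needed to even make sense of applying~\Cref{thm:differentiallyclosed} to $\cap_{a\in A}\I_a$, and then derive~\eqref{ITS2} by a degreewise computation that reduces everything to linear-algebraic duality of subspaces of $R_d$. For~\eqref{ITS1}, the key observation is that each differential operator $D_\ba$ is a $\kk$-linear map, so it respects intersections of subspaces: if $F\in \cap_{a\in A} I_{a,n}$ and $D_\ba\in D^{n-1}_R$, then $D_\ba F\in I_{a,n-|\ba|}$ for every $a\in A$ by hypothesis, hence $D_\ba F\in \cap_{a\in A} I_{a,n-|\ba|}=(\cap_{a\in A}\I_a)_{n-|\ba|}$. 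That $\cap_{a\in A}\I_a$ is a graded filtration of homogeneous ideals is immediate. So~\eqref{ITS1} follows directly from~\Cref{def:differentiallyclosedfiltration} with essentially no work.

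For~\eqref{ITS2}, I would work one graded degree $d$ at a time and use the perfect pairing $R_d\times\mathcal{D}_d\to\kk$. Fix $s$ and $d\ge s+1$. By~\Cref{def:LTransform}, $\cL^s(\cap_{a\in A}\I_a)_d=\big((\cap_{a\in A} I_{a,d-s})^{\perp}\big)_d = \big((\sum_{a\in A} I_{a,d-s})\cap R_d\big)^{\perp}$, wait---more carefully: for subspaces $U_a\subseteq R_d$ one has $(\bigcap_a U_a)^{\perp}=\sum_a U_a^{\perp}$ inside $\mathcal{D}_d$ (this is the standard fact that orthogonal complement interchanges sums and intersections under a perfect pairing; the sum being finite-dimensional, no finiteness subtlety arises even when $A$ is infinite, since $\mathcal{D}_d$ is finite-dimensional). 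Applying this with $U_a=(I_{a,d-s})_d$ gives $\big((\cap_{a\in A} I_{a,d-s})_d\big)^{\perp}=\sum_{a\in A}\big((I_{a,d-s})_d\big)^{\perp}$ in $\mathcal{D}_d$. The left-hand side is $\cL^s(\cap_{a\in A}\I_a)_d$ and the right-hand side is $\big(\sum_{a\in A}\L^s(\I_a)\big)_d$, again by~\Cref{def:LTransform}. Taking the direct sum over all $d\ge s+1$ yields the claimed equality of subspaces of $\mathcal{D}$; and since both sides are ideals---the left by~\eqref{ITS1} together with~\Cref{thm:differentiallyclosed}, the right as a sum of ideals---this is an equality of ideals. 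The same argument works verbatim for the partial-differentiation action on $S$ in characteristic $0$.

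I do not anticipate a serious obstacle; the only point requiring a moment's care is the interchange $(\bigcap_a U_a)^{\perp}=\sum_a U_a^{\perp}$ when the index set $A$ is infinite. This is harmless because for each fixed $d$ we are inside the finite-dimensional space $\mathcal{D}_d$, so only finitely many of the $U_a^{\perp}$ contribute to the (finite-dimensional) sum and only finitely many $U_a$ are needed to cut out the intersection; alternatively one can reduce to the finite case by Noetherianity of $R$, which guarantees $\cap_{a\in A} I_{a,n}$ equals a finite subintersection. I would include a one-line remark to this effect rather than belabor it.
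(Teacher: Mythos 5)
Your proof is correct and follows essentially the same approach as the paper: part~\eqref{ITS1} is the observation that differential operators commute with intersection of ideals (plus the routine check that the intersected family is a graded filtration), and part~\eqref{ITS2} is handled degree by degree using the linear-algebra fact that $(\bigcap_a U_a)^{\perp}=\sum_a U_a^{\perp}$ under the perfect pairing $R_d\times\mathcal{D}_d\to\kk$, with the same finite-dimensionality remark to dispose of infinite index sets (the paper cites~\cite[Lemma~2.7]{Ger95} for the orthogonal-complement identity). The only stylistic difference is that you gesture at gradedness of the intersection as immediate while the paper spells it out, and you add the harmless extra remark that both sides are ideals via~\Cref{thm:differentiallyclosed}; neither affects correctness.
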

\begin{proof}
	
	For~\eqref{ITS1}, it is clear that $\cap_{a\in A}\I_a$ is a filtration.  We show that it is graded.  Given any two positive integers $m,n$, suppose $f\in \left(\cap_{a\in A} \I_a \right)_m=\cap_{a\in A} I_{a,m}$ and $g\in  \left(\cap_{a\in A} \I_a \right)_n=\cap_{a\in A} I_{a,n}$.  Since $\I_a$ is a graded family for every $a\in A$, $fg\in I_{a,m+n}$ for every $a\in A$ and thus $fg\in \cap_{a\in A} I_{a,m+n}=  \left(\ \cap_{a\in A} \I_a \right)_{m+n}$.  Now we show that $\cap_{a\in A}\I_a$ is differentially closed.  Suppose $f\in \left(\cap_{a\in A} \I_a \right)_{n}=\cap_{a\in A} I_{a,n}$ Since $\I_a$ is a differentially closed family for every $a\in A$, $D_{\ba}(f)\in I_{a,n-|\ba|}$ for every $a \in A$ and $D_{\ba}\in D^{n-1}_R$.  Therefore $D_{\ba}(f)\in\cap_{a\in A} I_{a,n}$.
	
	Now we prove~\eqref{ITS2}.  Since the construction of $\L^s$ is accomplished by putting together graded pieces, it suffices to show that
	\[
	\L^s(\cap_{a\in A} \I_a)_d=\sum_{a \in A} \L^s(\I_a)_d,
	\]
	for any integer $d\ge 0$.  From~\Cref{def:LTransform}, it suffices to show that
	\[
	\left(\cap_{a\in A} (I_{a,n})_d\right)^{\perp}=\sum_{a\in A} (I_{a,n}^\perp)_d.
	\]
	For any fixed $d\ge 0$, the intersection on the left hand side and the sum on the right hand side need only run over finitely many of the graded filtrations $\{\I_a\}_{a\in A}$ (since the intersection occurs in the finite dimensional vector space $R_d$ and the sum occurs in the finite dimensional vector space $S_d$).  Then the equality follows the fact that $(U\cap V)^\perp=U^\perp+V^\perp$ for any vector subspaces $U,V\subset R_d$~\cite[Lemma~2.7]{Ger95}.
\end{proof}

\begin{ex}\label{ex:subadditivepowers}
	Let $A$ be an index set, $I_a$ an ideal of $R$, and $\{r_{a,n}\}_{n\in\N}$ an increasing subadditive sequence for every $a\in A$.  For an ideal $I_a\subset R$, consider the filtration $\I_a=\{I_a^{r_{a,n}}\}_{n\in\N}$.  Since $\{r_{a,n}\}_{n\in\N}$ is increasing, this filtration is differentially closed by~\Cref{ex:PowersDiffClosed}.  It is graded because
	\[
	I_a^{r_{a,i}}I_a^{r_{a,j}}=I_a^{r_{a,i}+r_{a,j}}\subset I_a^{r_{a,i+j}},
	\]
	where the final containment follows because $r_{a,i+j}\le r_{a,i}+r_{a,j}$.  Thus $\{I_a^{r_{a,i}}\}_{n\in\N}$ is a differentially closed graded filtration for every $a\in A$.  It follows from Proposition~\ref{prop:IntersectionToSum} that $\I=\cap_{a\in A} \I_a$ is a differentially closed graded filtration, $\cL^s(\I)$ is an ideal, and $\cL^s(\I)=\sum_{a\in A} \cL^s(\I_a)$.  If $\{r_{a,n}\}_{n\in\N}$ is simply an increasing sequence for every $a\in A$, $\I_a$ is a differentially closed filtration, but not necessarily graded.  The same conclusions still follow, but we may lose a reciprocity for asymptotic growth factors that we explore in~\Cref{ss:DualSequencesDifferentiallyClosedFiltrations}.
\end{ex}

\begin{ex}\label{ex:ZariskiMainLemmaHolomorphic}
	If $\kk$ is a field and $I$ is a radical ideal of $R=\kk[x_0,\ldots,x_N]$, we show that $\I=\{I^{(n)}\}$ is a differentially closed graded filtration.  Let ${\rm Max}(R)$ be the collection of maximal ideals of $R$.  According to Zariski's Main Lemma on Holomorphic Functions~\cite{Zariski-1949} (see also~\cite[Theorem~2.12]{DDSGHN2018}), $I^{(n)}=\bigcap\limits_{\substack{\fm\in {\rm Max}(R)\\ I\subset \fm}} \fm^n.$  The conclusion now follows from~\Cref{ex:PowersDiffClosed} and~\Cref{prop:IntersectionToSum}.
\end{ex}

\begin{ex}\label{ex:LsForSymbolicPowers}
	In this example we state the main results of~\cite{EI95} in terms of the notation we have introduced.  Let $\kk$ be an algebraically closed field.  We can build on~\Cref{ex:ZariskiMainLemmaHolomorphic} to compute $\cL^s(\I)$ where $\I=\{I(X)^{(n)}\}\subset R$ consists of the symbolic powers of the ideal of a projective variety $X\subset\P^N$.  If $p$ is a point in $X$, write $\fm_p\subset R$ for the ideal of $p$.  In this context, Zariski's Main Lemma on Holomorphic Functions reads
	\[
	I(X)^{(n)}=\bigcap_{p\in X} \mathfrak{m}_p^n.
	\]
	Put $\cL^s(X)=\cL^s(\I)$.  For a point $p\in X$ write $\I_p=\{\mathfrak{m}_p^n\}_{n\in\N}$ and $\cL^s(p)=\cL^s(\I_p)$.  From~\Cref{ex:LsForPointIdeals}, $\cL^s(p)=\langle L_p^{s+1}\rangle\subset S$ if we consider the action of $R$ on $S$ and $\cL^s(p)=\langle L_p^{[c]}: c\ge s+1\rangle$ if we consider the action of $R$ on $\mathcal{D}$.

 \Cref{prop:IntersectionToSum} yields that $\cL^s(X)=\sum_{p\in X}\cL^s(p)$.  Thus we obtain
\[
\cL^s(X)=\langle L_p^{s+1}: p\in X\rangle
\] 
for the action of $R$ on $S$ in characteristic $0$.  In arbitrary characteristic, for the action of $R$ on $\mathcal{D}$, we have
\[
\cL^s(X)=\langle L_p^{[c]}: p\in X,c\ge s+1\rangle.
\]
\end{ex}

\begin{ex}
Suppose $\{I_n\}_{n\ge 1}$ is a differentially closed graded filtration in $R$ and $J\subset R$ is an ideal.  We leave it to the reader to verify that $\{I_n:J^\infty\}_{n\ge 1}$ is also a differentially closed graded filtration.  This gives yet another way to see that symbolic powers are differentially closed, since symbolic powers may be obtained by saturating ordinary powers with respect to an appropriate ideal $J$, and we have seen in~\Cref{ex:PowersDiffClosed} that ordinary powers form a differentially closed graded filtration.
\end{ex}

\subsection{Dual sequences for a differentially closed graded filtration}\label{ss:DualSequencesDifferentiallyClosedFiltrations}
We now return to duality of sequences.  One of the sequences we study is the sequence $\alpha(I_n)$ for a graded family of ideals $\{I_n\}_{n\ge 1}$.  The next lemma begins our study of the interaction of this sequence with $\cL^s(\I)$.

\begin{lem}\label{lem:FiniteLength}
Suppose $\I=\{I_n\}_{n\ge 1}$ is a differentially closed graded filtration of ideals, and put $\alpha_n=\alpha(I_n)$.  The following are equivalent:
\begin{itemize}
\item $S/\cL^s(\I)$ (respectively $\mathcal{D}/\cL^s(\I)$) has finite length,
\item $\alpha_n > n+s$ for all $n$ large enough.
\end{itemize}
In particular, $\cL^s(\I)$ has finite length for all $s\ge 1$ if and only if $\{\alpha_n-n\}_{n\in\N}$ is not bounded above.
\end{lem}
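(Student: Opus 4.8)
The plan is to deduce the bulleted equivalence directly from the degreewise description of $\cL^s(\I)$, and then obtain the ``in particular'' clause by combining that equivalence with the monotonicity of the sequence $\{\alpha_n-n\}$.

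First I would unwind the finiteness condition. Since $\I$ is differentially closed, \Cref{thm:differentiallyclosed} guarantees that $\cL^s(\I)$ is an ideal of $\mathcal D$ (or of $S$ in characteristic $0$), so $\mathcal D/\cL^s(\I)$ is a graded module with finite--dimensional graded components and therefore has finite length if and only if $(\mathcal D/\cL^s(\I))_d=0$ for all $d\gg 0$. By \Cref{def:LTransform}, $\cL^s(\I)$ has no elements in degrees $\le s$, hence $(\mathcal D/\cL^s(\I))_d=\mathcal D_d\neq0$ for $1\le d\le s$, while for $d\ge s+1$ the identification \eqref{eq:SmodLvsI} (and its evident analogue for $\mathcal D$) gives $(\mathcal D/\cL^s(\I))_d\cong(I_{d-s})_d$. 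Thus $\mathcal D/\cL^s(\I)$ has finite length precisely when $(I_{d-s})_d=0$ for all $d\gg 0$. Since $I_m$ is homogeneous, $(I_m)_d=0$ says exactly that $I_m$ contains no form of degree $d$, i.e.\ $\alpha(I_m)>d$ (with the convention $\alpha(0)=\infty$); substituting $m=d-s$ this reads $\alpha_n>n+s$ for all $n\gg 0$, which is the stated equivalence. The argument with $S$ in place of $\mathcal D$ is identical.

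For the last assertion, observe that ``$\cL^s(\I)$ has finite length for every $s\ge1$'' means precisely that for each $s$ one has $\alpha_n-n>s$ for all $n\gg 0$, which is exactly the statement $\lim_{n\to\infty}(\alpha_n-n)=\infty$; this visibly forces $\{\alpha_n-n\}$ to be unbounded above. For the converse I would argue that $\{\alpha_n-n\}$ is nondecreasing, so that being unbounded above forces it to diverge to $\infty$. The monotonicity follows from differential closure: if $0\ne F\in I_n$ is a form of minimal degree $\alpha_n$, then $F$ is nonconstant (the ideals $I_n$ being proper, so $\alpha_n\ge\alpha(I_1)\ge1$), so in characteristic $0$ some partial derivative $\partial F/\partial x_i$ is a nonzero form of degree $\alpha_n-1$, and differential closure puts it in $I_{n-1}$; hence $\alpha_{n-1}\le\alpha_n-1$, that is, $\alpha_{n-1}-(n-1)\le\alpha_n-n$.

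I expect the main obstacle to be establishing the monotonicity of $\{\alpha_n-n\}$ in positive characteristic, where the naive argument breaks down: every first--order operator $\partial/\partial x_i$ can annihilate a minimal form $F\in I_n$ that happens to lie in $\kk[x_0^p,\ldots,x_N^p]$. To handle this one should instead work with the explicit $R$--algebra generators $D_{p^j\be_i}$ of $D_R$ from \Cref{lem:DiffClosedCharP}, determine (via Lucas' identity) which of these fail to kill $F$, and combine the resulting information with the inequality $\alpha_{n+1}\ge\alpha_n$ (valid because $\I$ is a filtration) and with subadditivity of $\{\alpha_n\}$ (\Cref{lem:WaldSub} applied to the valuation $\alpha_{\mathfrak{m}}$ of \Cref{ex:am}), so as to pin down the asymptotic behavior of $\{\alpha_n-n\}$. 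This is precisely the kind of base--$p$ bookkeeping that pervades \Cref{sec:InverseSystems}.
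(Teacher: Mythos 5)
Your argument for the bulleted equivalence is the same as the paper's: pass to the degree-$d$ graded piece, use the identification \eqref{eq:SmodLvsI} to rewrite $(S/\cL^s(\I))_d\cong(I_{d-s})_d$, and translate the vanishing of this piece to $\alpha_{d-s}>d$, i.e.\ $\alpha_n>n+s$ after the substitution $n=d-s$. For the two bullet points you and the paper do exactly the same thing.

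Where your proposal genuinely differs is in the ``in particular'' clause, which the paper's own proof never addresses: the written proof stops once the bulleted equivalence is established. You correctly observe that ranging over all $s$ turns that equivalence into ``$\lim_{n\to\infty}(\alpha_n-n)=\infty$,'' which is a priori strictly stronger than ``$\{\alpha_n-n\}$ not bounded above,'' and that upgrading ``unbounded'' to ``diverges'' needs $\{\alpha_n-n\}$ to be (eventually) nondecreasing. Your characteristic-$0$ monotonicity argument is precisely the one the paper records separately as \Cref{rem:char0}. Your worry about positive characteristic is also well founded: the paper \emph{assumes} ``$\{\alpha_n-n\}$ is nondecreasing'' as a hypothesis in \Cref{thm:InverseSystemDualityGeneral}, a strong signal the authors do not regard it as automatic. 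Indeed, if a minimal-degree form $F\in I_n$ lies in $\kk[x_0^p,\ldots,x_N^p]$, the lowest-order operator that can move it is some $D_{p^j\be_i}$, and (when its order $p^j$ is $\le n-1$) differential closure only gives $D_{p^j\be_i}F\in I_{n-p^j}$, hence $\alpha_{n-p^j}\le\alpha_n-p^j$; this does not yield $\alpha_{n-1}\le\alpha_n-1$, and the subadditivity of $\{\alpha_n\}$ you propose to bring in does not obviously close that gap. The clean fix is to impose nondecreasingness of $\{\alpha_n-n\}$ as an explicit standing hypothesis for the ``in particular'' (mirroring \Cref{thm:InverseSystemDualityGeneral}) or to restrict it to characteristic $0$. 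In short, your blind proof is correct and is actually more careful than the paper's on this point; the step you could not complete corresponds to an implicit hypothesis the lemma's statement omits, not to a flaw in your argument.
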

\begin{proof}
Fix a positive integer $s$ and put $\cL^s=\cL^s(\I)$.  Suppose $S/\cL^s$ has finite length.  Then
\[
\left(\frac{S}{\cL^s(\I)}\right)_{n+s}=0
\]
for all $n$ large enough.  By~\eqref{eq:SmodLvsI}, $(I_{n})_{n+s}=0$ and hence $\alpha(I_n)>n+s$ for all $n$ large enough.

Now suppose $\alpha(I_n)>n+s$.  Then $(I_n)_{n+s}=0$, hence $(S/\cL^s(\I))_n=0$.  If this holds for all $n$ large enough, $S/\cL^s(\I)$ clearly has finite length.  The proof for $\mathcal{D}$ is identical.
\end{proof}

\begin{rem}\label{ex:LsFiniteLengthProjectiveVariety}
When $X\subset \P^N$ is a projective variety in characteristic $0$, we claim that $S/\cL^s(X)$ has finite length if and only if $X$ is non-degenerate (meaning $X$ is not contained in a hyperplane).  To see this, note that if $X$ is contained in a hyperplane defined by $\ell=0$ for some linear form $\ell\in R$, then $\ell^n\in I(X)^{(n)}$ for all $n\in\N$.  Since we cannot have $\alpha(I(X)^{(n)})<n$, we have $\alpha(I(X)^{(n)})=n$ and $\alpha(I(X)^{(n)})-n= 0$ for all $n\in\N$.  Thus $S/\cL^s(X)$ does not have finite length by Lemma~\ref{lem:FiniteLength}.  On the other hand, suppose $X$ is non-degenerate.  Then $X$ contains points $p_0,\ldots,p_N$ which span $\P^N$.  By Example~\ref{ex:LsForSymbolicPowers}, $\cL^s(X)$ contains the ideal $\langle L_{p_0}^{s+1},\ldots,L_{p_N}^{s+1}\rangle$.  Since these are linearly independent, we may change coordinates so that $L_{p_0}=y_0,\ldots,L_{p_N}=y_N$.  Since $S/\langle y_0^{s+1},\ldots,y_N^{s+1}\rangle$ has finite length, so does $S/\cL^s(X)$. In arbitrary characteristic, we also have a similar result that $\mathcal{D}/\cL^s(X)$ has finite length if and only if $X$ is non-degenerate. The proof is the same as that in the case of characteristic $0$.  Notice that by Example~\ref{ex:LsForSymbolicPowers}, $\cL^s(X)$ contains the ideal $\langle L_{p_0}^{[c]},\ldots,L_{p_N}^{[c]}, c\ge s+1\rangle$, and it is clear that $\mathcal{D}/\langle y_0^{[c]},\ldots,y_N^{[c]}, c\ge s+1\rangle$ has finite length. 
\end{rem}

The second sequence we will study is the largest non-zero degree of $\mathcal{D}/\cL^s(\I)$, which we call the \textit{end} of $\mathcal{D}/\cL^s(\I)$. That is,
\[
\soc\left(\frac{\mathcal{D}}{\cL^s(\I)}\right)=\max\left\lbrace d : \left(\frac{\mathcal{D}}{\cL^s(\I)}\right)_d\neq 0\right\rbrace.
\]
Similarly, in characteristic $0$, we define $\soc(S/\cL^s(\I))$ as the largest non-zero degree of this quotient.  If $\soc(S/\cL^s(\I))<\infty$ then it is well known that $\soc(S/\cL^s(\I))=\reg(S/\cL^s(\I))$, where the latter is the Castelnuovo-Mumford regularity of $S/\cL^s(\I)$.

\begin{rem}
\label{rem:betasuperadditive}
Let $\I$ be a graded family of ideals. The sequence $\beta_s=\soc(S/\L^s(\I))$ (respectively $\beta_s=\soc(\mathcal{D}/\L^s(\I)))$ can be seen to be superadditive by interpreting it as
\[
\beta_s=\soc(S/\L^s(\I))=\max\{ d: (I_{d-s})_d\neq 0\} \text{ by } \eqref{eq:SmodLvsI}.
\]
The containment $(I_{d-s})_d(I_{d'-t})_{d'}\subseteq (I_{d+d'-(s+t)})_{d+d'}$ thus implies $\b_s+\b_t\leq \b_{s+t}$.
\end{rem}

Below we give a more refined version of this observation.

\begin{thm}\label{thm:InverseSystemDualityGeneral}
Suppose $\I=\{I_n\}_{n\in\N}$ is a differentially closed graded family of proper homogeneous ideals in $R$.  Put $\alpha_n = \alpha(I_n)$ and $\beta_s=\soc(S/\L^s(\I))$ (respectively, $\beta_s=\soc(\mathcal{D}/\L^s(\I))$).  Assume that the sequence $\{\alpha_n-n\}$ is nondecreasing and not bounded above.  Then
\begin{enumerate}
\item\label{c:1} $\{\a_n-n\}_{n\in \N}$ is a nondecreasing subadditive sequence.
\item\label{c:2} $\{\b_s-s\}_{s\in \N}$ and $\{\b_s\}_{s\in \N}$ are nondecreasing superadditive sequences.
\item\label{c:3} $\b_s-s=(\overrightarrow{\a_n-n})_s$
\item\label{c:4} $\a_n-n=(\overleftarrow{\b_s-s})_n$
\end{enumerate}
Write $\wa=\wa(\I)=\lim_{n\to\infty}\frac{\alpha_n}{n}$ and $\wb(\I)=\lim_{s\to\infty} \frac{\beta_s}{s}$.  Then
\[
\wb=\frac{\wa}{\wa-1} \mbox{  and  } \wa=\frac{\wb}{\wb-1}.
\]
\end{thm}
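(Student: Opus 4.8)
The plan is to reduce everything to the sequence-duality machinery of \Cref{thm:relativeduality} and \Cref{lem:DualityProperties} together with the dictionary \eqref{eq:SmodLvsI} between the graded pieces of $I_n$ and those of $S/\L^s(\I)$ (equivalently $\mathcal{D}/\L^s(\I)$). The key translation is that, by \eqref{eq:SmodLvsI}, $(S/\L^s(\I))_d\cong (I_{d-s})_d$, so $\beta_s=\soc(S/\L^s(\I))=\max\{d:(I_{d-s})_d\neq 0\}$. Writing $d=n+s$, this says $\beta_s=s+\max\{n: (I_n)_{n+s}\neq 0\}$. Since $\I$ is a graded family of proper ideals and $\{\alpha_n-n\}$ is nondecreasing, the condition $(I_n)_{n+s}\neq 0$ is equivalent to $\alpha(I_n)\le n+s$, i.e. to $\alpha_n-n\le s$; hence
\[
\beta_s-s=\max\{n: \alpha_n-n\le s\}=\sup\{n: (\a_n-n)\le s\}=(\overrightarrow{\a_n-n})_s,
\]
which is \eqref{c:3}. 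Before doing this I would record the auxiliary sequence $\ua'=\{\a_n-n\}_{n\in\N}$ and $\ub'=\{\b_s-s\}_{s\in\N}$.

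\textbf{Step 1 (part \eqref{c:1}).} The sequence $\{\a_n\}$ is subadditive by \Cref{lem:WaldSub} applied to the valuation $\a_\m$ and the graded family $\I$ (each $I_n$ is a proper ideal, so $\a_n\ge 1$). Subtracting the additive sequence $n$ preserves subadditivity, and nondecreasing is part of the hypothesis; so $\ua'$ is nondecreasing subadditive. Since $\{\a_n-n\}$ is unbounded and nondecreasing, it consists of natural numbers tending to $\infty$.

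\textbf{Step 2 (parts \eqref{c:3}, \eqref{c:4}, \eqref{c:2}).} By the computation above, $\ub'=\overrightarrow{\ua'}=\uau'$ as sequences (both are $N$-valued because $\ua'$ is nondecreasing and unbounded, so the sup is attained). This is \eqref{c:3}. Part \eqref{c:4} then follows from \Cref{lem:DualityProperties}(1): $\overleftarrow{\overrightarrow{\ua'}}=\ua'$, i.e. $\a_n-n=(\overleftarrow{\b_s-s})_n$. For \eqref{c:2}: since $\ua'$ is subadditive, \Cref{lem:DualityProperties}(3) gives that $\{(\uau')_s\}_{s\ge \a'_1}=\{\b_s-s\}_{s\ge \a'_1}$ is nondecreasing superadditive; one checks separately that nothing goes wrong for the finitely many small indices (or appeals to \Cref{rem:betasuperadditive}, which already shows $\{\b_s\}$ is superadditive directly from the graded family structure, and then $\{\b_s-s\}$ is superadditive since $-s$ is additive). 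That $\{\b_s\}$ is nondecreasing follows from $(I_{d-s})_d(I_{0})\subseteq\cdots$ — more simply, $\soc(S/\L^s)\le\soc(S/\L^{s+1})$ because $\L^s\supseteq\L^{s+1}$ degree by degree is \emph{false} in general, so instead I would argue: $(I_{d-s})_d\neq 0\Rightarrow (I_{d-s})_d\cdot R_1\subseteq (I_{d-s})_{d+1}$ need not be nonzero, so the cleanest route is: $\{\b_s-s\}=\uau'$ is nondecreasing by \Cref{def:overunder} since $\ua'$ is, hence $\{\b_s\}$ is nondecreasing too.

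\textbf{Step 3 (the growth factors).} Apply \Cref{lem:DualityProperties}(3) to the subadditive sequence $\ua'$: $\widehat{\ua'}=(\widehat{\uau'})^{-1}$, i.e. $\widehat{\b_s-s}=(\widehat{\a_n-n})^{-1}$. Now $\widehat{\a_n-n}=\wa-1$ and $\widehat{\b_s-s}=\wb-1$ by the standard fact that subtracting an additive sequence shifts the growth factor by $1$. Therefore $\wb-1=\dfrac{1}{\wa-1}$, whence
\[
\wb=\frac{\wa}{\wa-1}\qquad\text{and symmetrically}\qquad \wa=\frac{\wb}{\wb-1}.
\]
(Here $\wa-1\ge 0$; the hypothesis that $\{\a_n-n\}$ is unbounded forces $\wa>1$ via Fekete, so all quotients are legitimate and no $0^{-1}=\infty$ conventions are strictly needed, though invoking them does no harm.)

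\textbf{Main obstacle.} The only genuinely delicate point is \textbf{Step 2's identification $(I_n)_{n+s}\neq 0 \iff \a_n-n\le s$}: the direction $\Leftarrow$ needs that $\alpha(I_n)\le n+s$ actually produces a \emph{nonzero} element of $(I_n)_{n+s}$, which is immediate since $\alpha(I_n)$ is by definition the least degree with $(I_n)_{\alpha(I_n)}\neq 0$ and ideals are closed under multiplication by $R_1$ only if that multiplication is injective on the relevant piece — but in a polynomial ring $(I_n)_k\neq 0$ for $k=\alpha(I_n)$ and then $(I_n)_{k'}\neq 0$ for all $k'\ge k$ since $I_n\ne 0$ and $R$ is a domain, so no cancellation occurs. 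Verifying this carefully, and checking the boundary behavior of the sub/superadditivity for the initial indices $s<\a'_1$ (so that \eqref{c:2} holds on all of $\N$, not just eventually), are the two things to be careful about; everything else is a direct substitution into the results already proved in \Cref{s:sequences}.
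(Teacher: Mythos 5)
Your proposal is correct and follows essentially the same route as the paper: reinterpret $\beta_s-s$ as the transform $(\overrightarrow{\alpha_n-n})_s$ via the degreewise dictionary~\eqref{eq:SmodLvsI}, then feed this into the sequence duality of \Cref{lem:DualityProperties} to get the sub/superadditivity statements and the reciprocity $\wb-1=(\wa-1)^{-1}$. One side remark in your Step 3 is false, though it does not damage the argument: unboundedness of the nondecreasing subadditive sequence $\{\alpha_n-n\}$ does \emph{not} force $\wa>1$ --- for instance $\lceil\sqrt{n}\rceil$ is nondecreasing, subadditive, and unbounded yet has asymptotic growth factor $0$ --- so $\wa=1$ and $\wb=\infty$ remain possible, and the conventions $r/0=\infty$ are genuinely required rather than merely harmless; similarly your parenthetical claim that $(I_{d-s})_d\cdot R_1$ ``need not be nonzero'' is wrong (multiplication by a nonzero linear form in the domain $R$ is injective), though you correctly abandoned that route.
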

\begin{rem}\label{rem:char0}
Under the hypotheses of~\Cref{thm:InverseSystemDualityGeneral} in characteristic $0$, $\{\alpha_n-n\}_{n\ge 1}$ is always non-decreasing, which we can see as follows.  Since $\I$ is differentially closed, if $f\in I_n$ is a homogeneous polynomial of degree equal to $\alpha(I_n)$ ($n>1$), then $\frac{\partial f}{\partial x_i}\in I_{n-1}$ for $i=0,\ldots,N$.  Thus $\alpha(I_n)<\alpha(I_{n+1})$ unless all partials of $f$ vanish.  This happens if and only if $f$ is constant, which is impossible since $\I$ consists of proper ideals.  Thus $\alpha(I_{n-1})-(n-1)\le \alpha(I_{n})-n$, which shows $\{\alpha_n-n\}_{n\ge 1}$ is non-decreasing.
\end{rem}

\begin{proof}
For~\eqref{c:1}, the sequence $\alpha_n$ is subadditive by Lemma~\ref{lem:WaldSub}.  Since $n$ is linear, $\alpha_n-n$ is also subadditive.

We next prove~\eqref{c:3}.  Set $\gamma_n=\a_n-n$. By Definition~\ref{def:LTransform}, we have
\[
\begin{array}{rl}
\b_s & =\soc(S/\L^s)\\[5 pt]
&= \max\{d:\alpha(I^{(d-s)})\le d\}\\[5 pt]
&=\max\{d-s:\alpha(I^{(d-s)})-(d-s)\le s\}+s\\[5 pt]
&= \max\{t:\alpha(I^{(t)})-t\le s\}+s\\[5 pt]
&=\overrightarrow{\gamma}_s+s,
\end{array}
\]
which proves~\eqref{c:3}.  Part~\eqref{c:4} follows immediately from~\eqref{c:3} and Theorem~\ref{lem:DualityProperties} (1).

For~\eqref{c:2}, since $\beta_s-s=(\overrightarrow{\a_n-n})_s$ by~\eqref{c:3}, the definition of the transform $\overrightarrow{\a_n-n}$ implies $\{\b_s-s\}$ is also nondecreasing.  That $\{\beta_s-s\}$ is superadditive follows from~\eqref{c:3},~\eqref{c:1}, and \Cref{lem:DualityProperties} (3). Clearly, we have $\beta_s=(\beta_s-s)+s$. Since each of the sequences $\{\beta_s-s\}_{s\in\N}$ and $\id=\{s\}_{s\in \N}$ are nondecreasing and superadditive, the same is true of their sum, $\ub$.

Finally we prove the last two equalities, which are clearly equivalent.  By Theorem~\ref{lem:DualityProperties} (3), the desired result follows by means of the identity
\[
\wb-1=\lim_{s\to \infty} \frac{\b_s-s}{s}=\left(\lim_{n\to \infty} \frac{\a_n-n}{n}\right)^{-1}=\frac{1}{\wa-1},
\]
which is equivalent to the claims regarding $\wa$ and $\wb$.  The proof is identical for $\beta_s=\soc(\mathcal{D}/\cL^s(\I))$.
\end{proof}

\begin{cor}\label{cor:boundingWaldschmidt}
With the same setup as Theorem~\ref{thm:InverseSystemDualityGeneral}, we have inequalities
\[
\soc\left(\frac{\mathcal{D}}{\L^s(\I)}\right)\le\frac{s\wa}{\wa-1} \qquad  \text{ and }  \qquad 
\frac{n\wb}{\wb-1}\le \alpha(I_n).
\]
The same statements follow with $\mathcal{D}$ replaced by $S$.
\end{cor}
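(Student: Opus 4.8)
The plan is to derive both inequalities directly from \Cref{thm:InverseSystemDualityGeneral}, using only the elementary fact that the asymptotic growth factor of a subadditive (resp.\ superadditive) sequence is the infimum (resp.\ supremum) of its normalized terms.

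First I would recall that, by \Cref{lem:WaldSub}, the sequence $\{\alpha_n\}_{n\in\N}$ is subadditive; hence Fekete's lemma gives $\wa=\inf_{n\in\N}\frac{\alpha_n}{n}$, and in particular $\alpha(I_n)=\alpha_n\ge n\wa$ for every $n\in\N$. Symmetrically, part~\eqref{c:2} of \Cref{thm:InverseSystemDualityGeneral} (equivalently \Cref{rem:betasuperadditive}) shows that $\{\beta_s\}_{s\in\N}$ is superadditive, so $\wb=\sup_{s\in\N}\frac{\beta_s}{s}$, and therefore $\soc(\mathcal{D}/\L^s(\I))=\beta_s\le s\wb$ for every $s\in\N$.

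Next I would invoke the reciprocity identities $\wb=\frac{\wa}{\wa-1}$ and $\wa=\frac{\wb}{\wb-1}$ furnished by \Cref{thm:InverseSystemDualityGeneral}. Substituting the first into $\beta_s\le s\wb$ yields $\soc(\mathcal{D}/\L^s(\I))\le\frac{s\wa}{\wa-1}$, and substituting the second into $\alpha_n\ge n\wa$ yields $\alpha(I_n)\ge\frac{n\wb}{\wb-1}$, which are exactly the asserted bounds. Here it is worth noting that since $\alpha_n$ is subadditive one has $\wa\le\alpha_1<\infty$, so $\wa\in[1,\infty)$ and the right-hand side $\frac{s\wa}{\wa-1}$ is unambiguous under the stated conventions --- it equals $+\infty$ precisely when $\wa=1$, in which case $\wb=\infty$, the quantity $\frac{n\wb}{\wb-1}$ is interpreted as $n\wa=n$ (the value forced by the reciprocity identity, and an inequality that holds since $\{\alpha_n-n\}$ is nondecreasing with $\alpha_1\ge 1$), and the first bound is then vacuously true. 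Finally, the version with $\mathcal{D}$ replaced by $S$ follows by the identical argument, as \Cref{thm:InverseSystemDualityGeneral} is proved verbatim for $\beta_s=\soc(S/\L^s(\I))$ in characteristic $0$. I anticipate no genuine obstacle: the corollary merely repackages \Cref{thm:InverseSystemDualityGeneral}, the only work being to transport the statements ``$\wa$ is an infimum'' and ``$\wb$ is a supremum'' across the reciprocity formula.
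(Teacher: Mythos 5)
Your proposal is correct and follows essentially the same route as the paper: use subadditivity of $\{\alpha_n\}$ and superadditivity of $\{\beta_s\}$ to rewrite $\wa$ as an infimum and $\wb$ as a supremum, giving $\alpha_n\ge n\wa$ and $\beta_s\le s\wb$, then feed in the reciprocity formulas from \Cref{thm:InverseSystemDualityGeneral}. The extra care you take with the degenerate case $\wa=1$, $\wb=\infty$ is harmless but not something the paper spells out; otherwise the two arguments coincide.
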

\begin{proof}
Put $\alpha_n=\alpha(I_n)$ and $\beta_s=\soc(\mathcal{D}/\L^s(\I))$.  By Theorem~\ref{thm:InverseSystemDualityGeneral}, $\wb=\frac{\wa}{\wa-1}$.  Since $\beta_s$ is superadditive, $\frac{\beta_s}{s}\le \wb=\frac{\wa}{\wa-1}$.  Multiplying both sides by $s$ gives the first inequality.  Likewise, by Theorem~\ref{thm:InverseSystemDualityGeneral}, $\wa=\frac{\wb}{\wb-1}$.  Since $\alpha_n$ is subadditive, $\frac{\alpha_n}{n}\ge \wa=\frac{\wb}{\wb-1}$.  Multiplying by $n$ gives the second inequality.  The proof is identical for $S$.
\end{proof}

\begin{ex}\label{ex:WaldschmidtDuality}
Continuing from Example~\ref{ex:LsForSymbolicPowers},
we consider the ideal $I(X)$ of a projective variety $X\subset \P^N$ in characteristic $0$, the filtration $\I=\{I(X)^{(n)}\}$, and the ideal $\cL^s(X)=\langle L_p^{s+1}: p\in X\rangle\subset S$.  We assume $X$ is non-degenerate so that $S/\cL^s(X)$ has finite length by Example~\ref{ex:LsFiniteLengthProjectiveVariety}. Following~\Cref{thm:InverseSystemDualityGeneral}, put $\alpha_n=\alpha(I(X)^{(n)})$, $\beta_s=\reg(S/\cL^s(X))$.  Then $\wa$ is the Waldschmidt constant of $I(X)$ (see~\Cref{def:waldschmidt}).  Theorem~\ref{thm:InverseSystemDualityGeneral} yields that the Waldschmidt constant can be expressed in terms of $\wb$ (and vice-versa).  Moreover, Corollary~\ref{cor:boundingWaldschmidt} yields the bounds
\[
\frac{n\wb}{\wb-1}\le \alpha(I(X)^{(n)}) \quad\mbox{and}\quad \reg\left(\frac{S}{\cL^s(X)}\right)\le \frac{s\wa}{\wa-1}.
\]
The right-hand bound was observed in~\cite{DiPVill21}, where it was used to determine a lower bound for the dimension of certain multivariate spline spaces.
\end{ex}

\section{Asymptotic regularity and Seshadri constant}

Throughout this section we consider  a finite set of points  $X=\{p_1,\ldots,p_r\}\subset \mathbb{P}^N$ and denote by $I(X)\subseteq R=\kk[x_0,\ldots,x_N]$ the saturated ideal defining $X$ with its reduced scheme structure.
\begin{defn}
\label{def:Seshadri}
The  multipoint Seshadri constant for $X$ is the real number
\[
\e(X)= \inf_C\left\{ \frac{\deg(C)}{ \sum_{i=1}^r {\rm mult}_{p_i} C} \right\} 
\]
where $C$ is any curve with $C\cap X\neq \emptyset$, $\deg(C)$ is the multiplicity of $R/I(C)$, and ${\rm mult}_{p_i} C$ is the multiplicity of $C$ at $p_i$, that is, the multiplicity of the local ring $(R/I(C))_{P_i}$, where $P_i=I(p_i)$. It suffices in fact to consider irreducible curves in the definition. Since we only consider Seshadri constants of varieties $X\subseteq \P^N$ with respect to the line bundle $ \mathcal{O}_{\P^n}(1)$, we suppress  this information from the notation.
\end{defn}

Seshadri constants were introduced in \cite{Demailly}. For nice expositions of the circle of ideas this has led to in the intervening years see \cite[\S 5.1]{Laz04} or \cite{primer}.

 In this section we establish a limit description for the multipoint Seshadri constant $\e(X)$. This generalizes a similar result in \cite[Theorem 5.1.17]{Laz04} for single point Seshadri constants. Moreover we establish a duality between the sequence of jet separation indices, whose limit is the multipoint Seshadri constant, and the sequence of Castelnuovo-Mumford regularities of the symbolic powers for the ideal $I(X)$; see \Cref{prop:superadd}. We further demonstrate how this duality underpins the well known reciprocity between the Seshadri constant $\e(X)$ and the asymptotic regularity (alternately termed the $s$-invariant) of $X$; see \cite[Remark 5.4.3]{Laz04}. Our methods recover this reciprocity relation; see Theorem \ref{thm:areg} for the specific statement 
 
A relevant sequence for our purposes requires the following definition.

\begin{defn}
\label{def:genjets}
Let $I$ be a homogeneous ideal of a standard graded ring $R$ with homogeneous maximal ideal $\fm$ and $d\in \N$. Define the {\em jet separation sequence} of $I$ by
\[
s(I,d)=\sup\{k \in \N\mid \reg(R/I^{(k+1)})\leq d\}.
\] 
\end{defn}

The terminology ``jet separation sequence" is justified by the following notion previously developed in the literature; see \cite[Definitions 5.1.15 and 5.1.16]{Laz04} building on the related notion of $k$-jet ampleness; see \cite{BS}.

\begin{defn}
\label{def:jets}
A finite set of points  $X=\{p_1,\ldots, p_r\}\subset \mathbb{P}^N$ with defining ideals $P_1, \ldots, P_r$ is said to {\em separate (uniform) $k$-jets in degree $d$} if the following  map  obtained by canonical projection onto each direct summand is surjective \begin{equation}
\label{eq:jetsep}
\kk[x_0,\ldots,x_n]_d \to \bigoplus_{i=1}^r \left(\kk[x_0,\ldots,x_n]/P_i^{k+1}\right)_d
\end{equation}
We define the {\em  jet separation index} of $X$ in degree $d$ to be the integer
\[
s(X,d)=\sup\{ k \in \N \mid X \text{ separates } k\text{-jets in degree } d\}.
\]
\end{defn}

The name coincidence gives an indication that the two notions defined above are related, a fact that we make precise in the next proposition.


\begin{prop}
\label{prop:jet}
Let $X$ be a finite set of $r\geq 2$ points in  $\mathbb{P}^N$ with defining ideal $I(X)$ and $N\geq 2$. Then for each $d\in\N$ the jet separation indices of \Cref{def:genjets} and \Cref{def:jets} agree, that is,  $s(X,d)= s(I(X),d)$.
\end{prop}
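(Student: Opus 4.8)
The goal is to show that the two notions of jet separation index coincide, i.e. $s(X,d) = s(I(X),d)$ where $s(X,d)$ is defined via surjectivity of the map \eqref{eq:jetsep} and $s(I(X),d)$ is defined via $\reg(R/I(X)^{(k+1)}) \le d$. The plan is to translate both quantities into the same cohomological condition. First I would rewrite the surjectivity of \eqref{eq:jetsep} in terms of the ideal $J_{k} := \bigcap_{i=1}^r P_i^{k+1}$, which (since the $P_i$ are the prime ideals of distinct points, hence pairwise comaximal) satisfies $R/J_k \cong \bigoplus_{i=1}^r R/P_i^{k+1}$ and also $J_k = I(X)^{(k+1)}$ by the description of symbolic powers of a radical ideal as the intersection of ordinary powers of its minimal primes. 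So the map \eqref{eq:jetsep} is precisely the degree-$d$ component of the quotient map $R_d \to (R/I(X)^{(k+1)})_d$, and $X$ separates $k$-jets in degree $d$ exactly when $(R/I(X)^{(k+1)})_d$ is ``saturated at degree $d$'' in the sense that $R_d$ surjects onto it — equivalently, when $H^1_\fm(R/I(X)^{(k+1)})_d = 0$ using the exact sequence $0 \to I(X)^{(k+1)} \to R \to R/I(X)^{(k+1)} \to 0$ and the fact that $H^1_\fm(R) = 0$ in the relevant degrees (since $R$ is a polynomial ring of dimension $N+1 \ge 2$).

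The next step is to connect $H^1_\fm(R/I(X)^{(k+1)})_d = 0$ for all degrees $\ge d$ with the regularity bound $\reg(R/I(X)^{(k+1)}) \le d$. Here I would use that $R/I(X)^{(k+1)}$ is a one-dimensional Cohen–Macaulay ring (the points are a zero-dimensional scheme, so the coordinate ring of any fat-point scheme supported on $X$ has depth $1$ and dimension $1$ — this is where the aspCM property of point ideals, \Cref{def:aspCM}, enters). For a one-dimensional graded module the only local cohomology modules are $H^0_\fm$ and $H^1_\fm$, and Cohen–Macaulayness kills $H^0_\fm$, so by the local cohomology description of regularity in \Cref{def:reg}, $\reg(R/I(X)^{(k+1)}) = \soc(H^1_\fm(R/I(X)^{(k+1)})) + 1$. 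Therefore $\reg(R/I(X)^{(k+1)}) \le d$ is equivalent to $H^1_\fm(R/I(X)^{(k+1)})_j = 0$ for all $j \ge d$, which by the previous paragraph is equivalent to $X$ separating $k$-jets in all degrees $\ge d$. A small monotonicity observation finishes the identification: if $X$ separates $k$-jets in degree $d$ then it separates $k$-jets in degree $d' \ge d$ (multiply a section realizing surjectivity in degree $d$ by an appropriate linear form, using that the coordinate ring of the fat points is generated in degree one after passing to a suitable hyperplane section, or more directly that the cokernel of \eqref{eq:jetsep} is a quotient of $H^1_\fm$ which vanishes in high degree and whose vanishing is monotone). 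Hence ``$X$ separates $k$-jets in degree $d$'' and ``$\reg(R/I(X)^{(k+1)}) \le d$'' are equivalent conditions, and taking the supremum over $k$ gives $s(X,d) = s(I(X),d)$.

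The main obstacle I anticipate is the monotonicity step — justifying that separation of $k$-jets in degree $d$ implies separation in all higher degrees. This requires a bit of care because \emph{a priori} the cokernel of \eqref{eq:jetsep} need not be monotone just from the definition; one genuinely needs the cohomological interpretation, namely that $\operatorname{coker}(R_d \to (R/I(X)^{(k+1)})_d)$ injects into $H^1_\fm(R/I(X)^{(k+1)})_d$, together with the standard fact that for a finitely generated graded module over a polynomial ring the graded pieces of $H^1_\fm$ vanish in all sufficiently large degrees and, more to the point, that vanishing in one degree propagates upward once we know the module is $1$-dimensional Cohen–Macaulay (so that $H^1_\fm$ is generated in nonpositive degrees over $R$, equivalently the Hilbert function of $R/I(X)^{(k+1)}$ agrees with its Hilbert polynomial from degree $\reg+1 = \soc(H^1_\fm)+2$ onward but becomes surjective from $R$ exactly one step earlier). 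Getting the indexing right between ``$\soc(H^1_\fm)$'', ``$\reg$'', and ``the least degree from which \eqref{eq:jetsep} is surjective'' — and confirming it is $\reg(R/I^{(k+1)}) \le d$ rather than an off-by-one variant — is the only delicate bookkeeping in the argument; the hypotheses $r \ge 2$ and $N \ge 2$ presumably serve to rule out degenerate edge cases (e.g. a single point, or points on a line) where the Cohen–Macaulay one-dimensional picture or the vanishing $H^1_\fm(R) = 0$ could fail or the regularity could behave atypically.
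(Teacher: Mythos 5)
Your global strategy---reduce surjectivity of \eqref{eq:jetsep} to vanishing of $H^1_\fm(R/I(X)^{(k+1)})$ in degree $d$, then relate that vanishing to $\reg(R/I(X)^{(k+1)})\le d$ using the $1$-dimensional Cohen--Macaulay structure---is essentially the paper's approach, phrased in module/local-cohomology language rather than sheaf cohomology. The paper instead passes to the long exact sequence in sheaf cohomology attached to $0\to \I^{(k+1)}(d)\to \mathcal{O}_{\P^N}(d)\to \bigoplus_i \mathcal{O}_{\P^N}(d)/\fm_i^{k+1}\to 0$, uses $N\ge 2$ to kill $H^1(\P^N,\mathcal{O}_{\P^N}(d))$, and then identifies $\reg(R/I^{(k+1)})=\soc H^1_\fm(R/I^{(k+1)})+1=\soc H^2_\fm(I^{(k+1)})+1$ with the threshold for vanishing of $H^1(\P^N,\I^{(k+1)}(d))$; the two routes are equivalent via Serre's correspondence, and you correctly flag the ``no-gaps'' propagation of the $H^1$-vanishing for $1$-dimensional CM modules as the delicate bookkeeping point.

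However, your opening reduction contains a genuine error. You claim that the $P_i$ are pairwise comaximal, hence $R/J_k\cong\bigoplus_i R/P_i^{k+1}$ by CRT, and that \eqref{eq:jetsep} is therefore ``the quotient map $R_d\to(R/I(X)^{(k+1)})_d$.'' That cannot be correct: the quotient map is surjective in every degree by construction, so your identification would force $s(X,d)=+\infty$ for all $d$. The comaximality claim is the culprit: for distinct points $p_i,p_j\in\P^N$ one has $P_i+P_j=\fm$, the irrelevant maximal ideal, which is a proper ideal of $R$; so the homogeneous primes $P_i$ are \emph{not} comaximal and CRT does not give a graded isomorphism. What is true is that \eqref{eq:jetsep} factors as $R_d\twoheadrightarrow(R/J_k)_d\hookrightarrow\bigoplus_i(R/P_i^{k+1})_d$, so its surjectivity is equivalent to the \emph{inclusion} being an equality. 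For $d>k$ (the only range that matters when $r\ge 2$, since for $d\le k$ the target is $R_d^{\oplus r}$ and surjectivity trivially fails) the target equals $H^0(\P^N,\widetilde{R/J_k}(d))$, and the four-term sequence $0\to H^0_\fm(R/J_k)\to R/J_k\to\Gamma_*(\widetilde{R/J_k})\to H^1_\fm(R/J_k)\to 0$, together with saturatedness of $J_k$ (so $H^0_\fm=0$), identifies the cokernel with $H^1_\fm(R/J_k)_d$. Once this step is repaired, the remainder of your argument goes through and agrees with the paper.
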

\begin{proof}
In geometric language the map \eqref{eq:jetsep} can be written as
\begin{equation}
\label{eq:H1}
H^0(\P^N, \mathcal{O}_{\P^n}(d) )\to H^0(\P^N,    \mathcal{O}_{\P^N}(d)/\fm_1^{k+1}) \oplus \cdots \oplus  H^0(\P^N,  \mathcal{O}_{\P^N}(d)/\fm_r^{k+1}),
\end{equation}
where $\fm_i$ is the ideal sheaf corresponding to $P_i$.
A necessary and sufficient condition for the surjectivity of \eqref{eq:H1} is
 $H^1(\P^N,  \I^{(k+1)}\otimes \mathcal{O}_{\P^N}(d))=0$, where $\I^{(k+1)}$ is the ideal sheaf corresponding to $I(X)^{(k+1)}$. 
This follows from the long exact sequence in homology arising from the short exact sequence of sheaves
\[
0\to  \I^{(k+1)}\otimes \mathcal{O}_{\P^N}(d) \to \mathcal{O}_{\P^N}(d) \to \mathcal{O}_{\P^N}(d)/\fm_1^{k+1}\otimes \cdots \otimes \mathcal{O}_{\P^N}(d)/\fm_k^{k+1} \to 0
\]
and the vanishing of $H^1(\P^N, \mathcal{O}_{\P^N}(d))$ due to $N\geq 2$.
Expressing regularity in terms of local cohomology (see \Cref{def:reg}) yields
\begin{eqnarray*}
\reg(R/I(X)^{(k+1)}) &=& \soc H^1_\fm(R/I(X)^{(k+1)})+1= \soc H^2_\fm(I(X)^{(k+1)})+1\\
           &=& \min\{d \mid H^1(\P^n,  \I(X)^{(k+1)}\otimes \mathcal{O}_{\P^N}(d))=0\}.
\end{eqnarray*}
It follows that $\reg(R/I(X)^{(k+1)})\leq d$ if and only if  \eqref{eq:jetsep} is surjective in degree  $d$. Thus the claim follows by comparing \Cref{def:genjets} and \Cref{def:jets}.
\end{proof}

We can now relate the jet separation sequence of an ideal with the sequence of regularities of its symbolic powers in the style of \cref{s:sequences}.

\begin{thm}
\label{prop:superadd}
Let $I$ be a homogeneous ideal of a graded ring $R$, set $s_d=s(I,d-1)$ for $d\in \N$, and set $r_k=\reg(I^{(k+1)})$. Then 
\begin{enumerate}
\item the sequences $\{s_d\}_{d\in\N}$ and $\{r_k\}_{k\in\N}$ are nondecreasing and dual as follows: 
\[  s_d=\overrightarrow{r}_d \text{ and } r_k=\so_{k}. \]
\item If $I$ is  aspCM, the sequence $\{r_k\}_{k\in\N}$ is subadditive and $\{s_d\}_{d\geq r_1}$ is superadditive. 
\item In particular the shifted jet separation sequence $s(X)[-1]=\{s(X,d-1)\}_{d\geq \reg{I(X)^{(2)}}}$ for a finite set of points $X$ in $\P^N$ with $N\geq 2$ is superadditive.
\item If $I$ is aspCM, the asymptotic regularity of $I$ is related to the asymptotic growth of the jet separation sequence by 
\[
\lim_{d\to\infty} \frac{s(I,d)}{d}=\widehat{\reg}(I)^{-1}
\] 
\end{enumerate}
\end{thm}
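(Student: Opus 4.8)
The plan is to deduce all four parts from the sequence duality of \Cref{s:sequences} together with \Cref{lem:CM}; the work is largely a matter of unwinding definitions and tracking index shifts, and I expect the only genuinely delicate point to be a monotonicity claim hidden in part (1).

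\textbf{Part (1).} First I would unwind the definitions. Since $\reg(R/I^{(k+1)})=\reg(I^{(k+1)})-1=r_k-1$, \Cref{def:genjets} reads $s_d=s(I,d-1)=\sup\{k:r_k-1\le d-1\}=\sup\{k:r_k\le d\}$, which is exactly $\overrightarrow{r}_d$ in the notation of \Cref{def:overunder}; being an $\overrightarrow{(\cdot)}$-transform, $\{s_d\}$ is automatically nondecreasing. The remaining content of part (1) is that $\{r_k\}=\{\reg(I^{(k+1)})\}$ is nondecreasing: granting this, $\{r_k\}$ is a nondecreasing sequence of natural numbers, so \Cref{lem:DualityProperties}(1) yields $r=\overleftarrow{\overrightarrow{r}}=\so$. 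To see that $\{\reg(I^{(k+1)})\}$ is nondecreasing I would use local cohomology: the surjection $R/I^{(k+2)}\twoheadrightarrow R/I^{(k+1)}$, after applying the right-exact top local cohomology functor $H^D_{\fm}(-)$ with $D=\dim R/I=\dim R/I^{(k)}$, induces a surjection $H^D_{\fm}(R/I^{(k+2)})\twoheadrightarrow H^D_{\fm}(R/I^{(k+1)})$, hence $\soc H^D_{\fm}(R/I^{(k+1)})\le \soc H^D_{\fm}(R/I^{(k+2)})$; when the quotients are Cohen--Macaulay this top end computes the regularity and monotonicity follows. This is the step I expect to be the main obstacle in full generality, since for lower local cohomology there is no such surjection; all subsequent parts assume $I$ is aspCM, where the argument applies cleanly.

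\textbf{Parts (2) and (3).} Note that $\{I^{(n)}\}_{n\ge 1}$ is a graded family with $\dim R/I^{(n)}=\dim R/I$ for every $n$, so under the aspCM hypothesis \Cref{lem:CM} shows $\{\reg(I^{(n)})\}_{n\ge 1}$ is subadditive. Combining this with the monotonicity from part (1), for $i,j\in\N$ one gets
\[
r_{i+j}=\reg(I^{(i+j+1)})\le \reg(I^{(i+j+2)})\le \reg(I^{(i+1)})+\reg(I^{(j+1)})=r_i+r_j,
\]
so $\{r_k\}$ is a nondecreasing subadditive sequence of natural numbers, and \Cref{lem:DualityProperties}(3) applied to $\ua=\{r_k\}$ gives that $\overrightarrow{r}=\{s_d\}$ is superadditive on the range $d\ge r_1=\reg(I^{(2)})$, which is the assertion of part (2). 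Part (3) is then immediate: for a finite set $X$ of $r\ge 2$ points in $\P^N$ with $N\ge 2$, \Cref{prop:jet} identifies $s(X,d)=s(I(X),d)$, and $I(X)$ is aspCM (saturated ideals of dimension one lie in the aspCM class), so part (2) applied to $I=I(X)$ says precisely that $s(X)[-1]=\{s(X,d-1)\}_{d\ge \reg(I(X)^{(2)})}$ is superadditive.

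\textbf{Part (4).} Superadditivity of $\{s_d\}_{d\ge r_1}$ together with Fekete's lemma shows $\widehat{s}:=\lim_{d\to\infty}s_d/d$ exists, and then $\lim_{d\to\infty} s(I,d)/d=\lim_{d\to\infty}s_{d+1}/d=\widehat{s}$ by \Cref{lem:subseq}(3), since $s(I,\cdot)$ is the shift $\{s_d\}[1]$. Likewise, writing $\underline{\rho}=\{\reg(I^{(n)})\}_{n\ge1}$, the sequence $\{r_k\}$ is the shift $\underline{\rho}[1]$; $\underline{\rho}$ is subadditive by \Cref{lem:CM}, so $\widehat{\rho}=\widehat{\reg}(I)$ exists and \Cref{lem:subseq}(3) gives $\widehat{r}=\widehat{\reg}(I)$. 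Finally \Cref{lem:DualityProperties}(3) applied to the subadditive sequence $\{r_k\}$ yields $\widehat{\overrightarrow{r}}=\widehat{r}^{-1}$, and since $s_d=\overrightarrow{r}_d$ for $d\ge r_1$ we conclude
\[
\lim_{d\to\infty}\frac{s(I,d)}{d}=\widehat{s}=\widehat{r}^{-1}=\widehat{\reg}(I)^{-1},
\]
with the conventions $0^{-1}=\infty$, $\infty^{-1}=0$ of \Cref{lem:DualityProperties} handling any degenerate values.
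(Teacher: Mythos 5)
Your proposal follows the paper's strategy closely: establish $s_d=\overrightarrow{r}_d$ directly from the definitions, reduce everything to the sequence duality of \Cref{s:sequences}, import subadditivity from \Cref{lem:CM} under the aspCM hypothesis, and assemble parts (2)--(4) from \Cref{lem:DualityProperties} together with \Cref{lem:subseq}. The overall structure matches the paper's proof.

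The one place where you genuinely add something is the monotonicity of $\{r_k\}$ in part (1), and you have put your finger on a real soft spot. The paper's argument there is, as written, circular: it invokes \Cref{lem:DualityProperties}(1) to pass from $s=\overrightarrow{r}$ to $r=\overleftarrow{s}$, but that lemma presupposes $\{r_k\}$ nondecreasing, and the paper then deduces nondecreasingness of $\{r_k\}$ \emph{from} the identity $r=\overleftarrow{s}$. You supply an actual argument --- the surjection $R/I^{(k+2)}\twoheadrightarrow R/I^{(k+1)}$ induces a surjection on top local cohomology, hence the end of $H^D_\fm(R/I^{(k+1)})$ is at most that of $H^D_\fm(R/I^{(k+2)})$ --- but, as you observe, identifying regularity with the end of top local cohomology uses Cohen--Macaulayness, so the argument covers parts (2)--(4) (where aspCM is assumed) but not part (1) as stated. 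Neither your proof nor the paper's establishes the monotonicity (equivalently, the identity $r=\overleftarrow{s}$) for an arbitrary homogeneous ideal; your flagging of this is a correct diagnosis of a gap in the statement. Two small remarks: strictly speaking $H^D_\fm(-)$ is not a right-exact functor --- the surjection comes from the long exact sequence in local cohomology together with the vanishing $H^{D+1}_\fm(I^{(k+1)}/I^{(k+2)})=0$ for dimension reasons, which is what you in effect use; and in part (4) you apply \Cref{lem:DualityProperties}(3) to the subadditive $\{r_k\}$, whereas the paper applies \Cref{lem:DualityProperties}(4) to the superadditive $\{s_d\}$, two equivalent routes to the same reciprocity.
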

\begin{proof}

We have directly from \Cref{def:genjets} that
\[
s_d=\sup\{ k\mid \reg(R/I^{(k+1)})\leq d-1\}=\sup\{ k\mid \reg (I^{(k+1)})\leq d\}=\sup\{k \mid r_k\leq d\}.
\]
This establishes  the first part of claim (1) as well as giving that $\{s_d\}_{d\in\N}$ is nondecreasing. Applying the operator $\overleftarrow{}$ to the identity  $s_d=\overrightarrow{r}_d$ and using \Cref{lem:DualityProperties} (1) yields $\overleftarrow{s}_k=\overleftarrow{\overrightarrow{r}}_k=r_k$. It follows from the definition of $\overleftarrow{s}_k$ that $\{r_k\}_{k\in\N}$ is nondecreasing.

To establish the remaining claims, recall from \Cref{lem:CM} that for aspCM $I$ the sequence $\{\reg(I^{(k)})\}_{k\in\N}$ is subadditive and the sequence $\{r_k\}_{k\in\N}$ is the subsequence $\{\reg(I^{(k)})\}_{k\in\N}[1]$. Since $\{r_k\}_{k\in\N}$ is nondecreasing the same is true of $\{\reg(I^{(k)})\}_{k\in\N}$. Thus, \Cref{lem:subseq}~(1) allows to conclude that $\{r_k\}_{k\in\N}$ is subadditive.


Having established that $\{r_k\}_{k\in\N}$ is subadditive and that $s_d=\overrightarrow{r}_d$, we deduce that $\{s_d\}_{d\geq r_1}$ is a superadditive sequence of natural  numbers by \Cref{lem:DualityProperties}~(3). Claim (3) follows from (2) by means of \Cref{prop:jet}. 

Claim (4) follows from \Cref{lem:DualityProperties}~(4) and  part (2) of the current proposition, which yield $\widehat{s}=\widehat{r}^{-1}$. Combining this with identities $\widehat{\reg}(I)=\widehat{r}$ and $\lim_{d\to\infty}\frac{s(I,d)}{d}=\widehat{s}$ deduced from \Cref{lem:subseq}~(3), we obtain
\[
\lim_{d\to\infty}\frac{s(I,d)}{d}=\widehat{s}=\widehat{r}^{-1}=\widehat{\reg}(I)^{-1}.
\]
\end{proof}

Our next goal is to relate the multipoint Seshadri constant $\e(X)$ to the asymptotic growth of the jet separation sequence for $I(X)$. For this we will need a multipoint analogue of the well-known Seshadri criterion \cite[Theorem 1.4.13]{Laz04}, which we include for lack of a suitable reference.

\begin{prop}[Multipoint Seshadri criterion]
\label{prop:ample}
Consider  a finite  set of points  $X=\{p_1,\ldots, p_r\}\subset \mathbb{P}^N$ with $N\geq 2$ and let $B$ be the blowup of $\mathbb{P}^N$ at $X$ with projection map $\mu:B \to \mathbb{P}^N$ and exceptional divisor $E=\sum_{i=1}^r E_i$. Let $H=\mu^*(\mathcal{O}_{\P^N}(1))$. Then the Seshadri constant  of \Cref{def:Seshadri} can be alternatively described as
\begin{eqnarray}
\label{eq:Seshadriample}
\e(X) &=& \sup \left\{ \frac{p}{q} : p,q\in \Q_{>0},  \ qH-p E \text{ is ample}\right \} \nonumber\\
&=&\sup \left\{ \lambda\in\R :  H-\lambda E \text{ is ample}\right \}
\end{eqnarray}
\end{prop}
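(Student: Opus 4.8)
The plan is to compare the curve-based definition of $\e(X)$ with the ampleness-based description via the Nakai--Moishezon criterion on the blowup $B$. First I would set up notation: on $B$, the Picard group is generated by $H=\mu^*\mathcal{O}_{\P^N}(1)$ and the exceptional divisors $E_1,\dots,E_r$, and for a curve $C'\subset B$ not contained in $E$ with strict transform of an irreducible curve $C\subset\P^N$ through the $p_i$, we have $H\cdot C'=\deg(C)$ and $E_i\cdot C' = \mathrm{mult}_{p_i}(C)$ (the latter by the standard fact that intersecting the strict transform with the exceptional divisor recovers the multiplicity). For a curve contained in some $E_i\cong \P^{N-1}$, we have $H\cdot C'=0$ and $E_i\cdot C' <0$ (since $\mathcal{O}_{E_i}(E_i)=\mathcal{O}_{\P^{N-1}}(-1)$), so $(qH-pE)\cdot C' = p\cdot(-E_i\cdot C')>0$ automatically for $p,q>0$; these curves impose no constraint.

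The core of the argument is to show that the divisor $qH-pE$ (equivalently $H-\lambda E$ after rescaling, $\lambda=p/q$) is ample precisely for $\lambda < \e(X)$. One direction: if $H-\lambda E$ is ample then for every irreducible curve $C\subset\P^N$ meeting $X$, positivity on its strict transform gives $\deg(C)-\lambda\sum_i \mathrm{mult}_{p_i}(C)>0$, hence $\lambda < \deg(C)/\sum_i\mathrm{mult}_{p_i}(C)$; taking the infimum over such $C$ yields $\lambda \le \e(X)$, and since the set of $\lambda$ with $H-\lambda E$ ample is open, $\sup\{\lambda : H-\lambda E \text{ ample}\}\le \e(X)$. (One must handle curves $C$ disjoint from $X$ separately: their strict transforms satisfy $(H-\lambda E)\cdot C'=\deg(C)>0$, no constraint.) For the reverse inclusion, suppose $0<\lambda<\e(X)$ is rational, write $\lambda=p/q$, and I would invoke the Nakai--Moishezon criterion: $qH-pE$ is ample iff $(qH-pE)^{\dim V}\cdot V>0$ for every irreducible subvariety $V\subseteq B$. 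The curve case $\dim V=1$ is handled by the definition of $\e(X)$ together with the remark on curves inside $E$; the top-dimensional case $V=B$ reduces to $(qH-pE)^N = q^N - p^N\cdot(\pm E_i^N\text{-terms})$, i.e. $q^NH^N + (-1)^N\sum p^N E_i^N = q^N - r\cdot p^N\cdot(\text{something})$—actually $H^N=1$, $H^{N-1}E_i=0$, and $E_i^N=(-1)^{N-1}$, so $(qH-pE)^N=q^N-r p^N(-1)^{N-1}(-1)^N\cdot\text{(sign)}$, which is positive once $\lambda$ is small, and in any case is subsumed by controlling all intermediate-dimensional $V$.

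The main obstacle I anticipate is the intermediate-dimensional subvarieties $1<\dim V<N$: Nakai--Moishezon requires positivity of $(qH-pE)^{\dim V}\cdot V$ for \emph{all} such $V$, and it is not immediately obvious that the curve condition $\lambda<\e(X)$ forces these. The standard resolution, which I would follow, is to reduce to the curve case by a limiting/numerical argument: use that $H$ is nef on $B$ (being a pullback), so $qH - pE$ lies on the segment between the nef class $qH$ and the curve-positivity condition, and apply the fact (a form of the classical Seshadri criterion, cf.~\cite[Theorem~1.4.13]{Laz04}) that a divisor of the form $\mu^*A - \lambda E$ on a blowup along points is ample iff it is positive on all curves—this is exactly the content being generalized, and the one-point version in Lazarsfeld goes through verbatim since the exceptional locus is a disjoint union of projective spaces and the relevant estimates are local at each $p_i$. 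Thus the proof amounts to: (i) the curve intersection computations on $B$, (ii) invoking the blowup Seshadri criterion to pass from curve-positivity to ampleness, (iii) matching the resulting supremum with the infimum over curves in Definition~\ref{def:Seshadri}, together with the observation that restricting to irreducible curves suffices because multiplicity and degree are both additive over components.
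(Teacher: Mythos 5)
Your proof of the first direction ($\e(X) \geq \sup\{\lambda : H - \lambda E \text{ ample}\}$) via intersection with pullbacks/strict transforms of curves is correct and matches the paper. Your intersection-theoretic setup on $B$ (including the observation that curves inside $E_i$ impose no constraint since $E_i \cdot C' < 0$) is also sound and is essentially what the paper uses in its nefness computation.

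The reverse direction is where your proposal diverges from the paper and where it contains a genuine gap. You propose to verify ampleness via Nakai--Moishezon, correctly identify the intermediate-dimensional subvarieties $1 < \dim V < N$ as the obstacle, and then resolve this by ``invoking a form of the classical Seshadri criterion'' that a divisor $\mu^*A - \lambda E$ on a blowup along points is ample iff positive on all curves, asserting that ``the one-point version in Lazarsfeld goes through verbatim.'' But this is precisely the content you are trying to prove, and you neither verify the claimed generalization nor note that the cited reference \cite[Theorem~1.4.13]{Laz04} is the general Seshadri criterion (which presupposes nefness and requires a \emph{uniform} lower bound on $(D\cdot C)/\mathrm{mult}_x(C)$ over all points $x\in B$ and all curves through $x$, not mere positivity on curves). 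Applying that theorem would still leave you to establish nefness and the uniform bound, neither of which appears in your sketch.

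The paper's proof avoids Nakai--Moishezon entirely and is substantially more economical. It proceeds in two steps. First, for $p/q \leq \e(X)$ it shows $L_{p,q} = qH - pE$ is \emph{nef} by decomposing an arbitrary curve $D \subset B$ as $D_1 + D_2$ with $D_1 \subset E$ and $D_2 \not\subset E$, computing $E \cdot D_1 = -\deg(N_{E/B}|_{D_1}) \leq 0$, and handling $D_2$ by pushing forward to $\P^N$ and using the curve definition of $\e(X)$. Second, for the strict inequality $p/q < \e(X)$ it upgrades nef to ample by perturbation: fix $d \gg 0$ so that $L_{1,d} = dH - E$ is ample (\cite[II, Prop.~7.10]{Hartshorne}), then by continuity of $\delta \mapsto (p-\delta)/(q - \delta d)$ choose a small rational $\delta > 0$ with $(p-\delta)/(q-\delta d) \leq \e(X)$, so that $L_{p-\delta,\,q-\delta d}$ is nef by step one, and conclude $L_{p,q} = L_{p-\delta,\,q-\delta d} + \delta L_{1,d}$ is ample since nef plus ample is ample \cite[Cor.~1.4.10]{Laz04}. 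If you replace your Nakai--Moishezon step with this ``nef plus a multiple of a known ample divisor'' argument, the gap closes; as written, the hardest step of your proposal is asserted rather than proved.
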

\begin{proof}
Temporarily denote $\e'(X):=\sup \left\{ \frac{p}{q} : p,q\in \Q_{>0},  \ qH-p E \text{ is ample}\right \}$.
For $p,q\in\Q_{>0}$ set $L_{p,q}:=qH-p E$ to be a $\Q$-divisor on $B$. Suppose $L_{p,q}$ is ample and hence nef. Computing the intersection product with the pullback of a curve $C\subset \P^N$ gives
\[
L_{p,q}\cdot \mu^*(C)=(qH-p E)\cdot \mu^*(C)= q\deg(C)-p\left( \sum_{i=1}^r {\rm mult}_{p_i}(C)\right)\geq 0
\]
and hence $\e(X)\geq \frac{p}{q}$ by \Cref{def:Seshadri}. We conclude that $\e(X)\geq \e'(X)$.

Conversely, suppose $\frac{p}{q}\leq \e(X)$. We show  that $L_{p,q}$ is nef. If $D$ is a curve in $B$, then $D=D_1+D_2$ with $D_1$ contained in $E$ and $D_2$ not contained in $E$ (we allow the possibility that $D_1=0$ or $D_2=0$). We have that $E\cdot D_1=- \deg(N_{E/B}|_{D_1})\leq0$ (where $N_{E/B}$ is the normal bundle of the exceptional divisor) and $\mu(D_2)=C$ is a curve in $\P^N$, thus by a computation similar to the  above display we conclude from $\frac{p}{q}\leq \e(X)$ that $L_{p,q}\cdot D_2\geq 0$. Therefore we have
\[
L_{p,q} \cdot D=(qH-p E)\cdot D= -p E\cdot D_1+L_{p,q}\cdot D_2 \geq 0,
\]
which shows $L_{p,q}$ is nef.
Suppose now that $\frac{p}{q}<\e(X)$. By  \cite[Section II, Proposition 7.10]{Hartshorne} the divisor $L_{1,d}=dH-E$ is ample for $d\in \N, d\gg0$. Fix such a $d$. Since $\frac{p}{q}<\e(X)$ and the expression $\frac{p-\delta}{q-\delta d}$ is a continuous function of $\delta\in\R_{>0}$ one can find $\delta\in \Q, \delta>0$ so that $\frac{p-\delta}{q-\delta d}\leq \e(X)$. Then the identity
\[
L_{p,q}= L_{p-\delta ,q-\delta d}+\delta L_{1,d}
\]
shows that $L_{p,q}$ is ample since  $L_{p-\delta ,q-\delta d}$ is nef by the above considerations, $\delta>0$,  and $L_{1,d}$ is ample; see \cite[Corollary 1.4.10]{Laz04}. We have obtained that $\e'(X)\leq \e(X)$, hence the first equality in \eqref{eq:Seshadriample} is established.
The second equality follows from the first noting that $L_{p.q}$ is ample if and only if $L_{1,p/q}=H-\frac{p}{q}E$ is ample and hence the last set in the display \eqref{eq:Seshadriample} is the closure of the first in the topology on $\R$. 
\end{proof}


The following is a multipoint version of \cite[Theorem 5.1.17]{Laz04}. Our proof follows the single point case closely, however the prior knowledge that the limit in the statement exists as a consequence of \Cref{prop:superadd} allows for slight simplifications. 

\begin{thm}
\label{prop:e}
If $X$ is a finite set of $r$ points in $\P^N$, $N\geq 2$, the limit of the jet separation index sequence exists and is equal to the multi-point Seshadri constant
\[
\e(X)= \lim_{d \to \infty} \frac{s(X,d)}{d}.
\]
\end{thm}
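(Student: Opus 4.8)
The plan is to establish the two inequalities $\e(X)\ge L$ and $\e(X)\le L$, where $L=\lim_{d\to\infty}\tfrac{s(X,d)}{d}$; this limit exists by \Cref{prop:superadd}, since $I(X)$ is aspCM (it defines a finite set of points), so the shifted jet separation sequence is superadditive and \Cref{lem:subseq} together with Fekete's lemma applies — in fact \Cref{prop:superadd} gives $L=\widehat{\reg}(I(X))^{-1}$. By \Cref{prop:jet} we may replace $s(X,d)$ by $s(I(X),d)$, so that ``$s(X,d)\ge k$'' is equivalent to $\reg\big(R/I(X)^{(k+1)}\big)\le d$, hence (by the cohomological computation in the proof of \Cref{prop:jet}, and since $H^1$ of an ideal sheaf twisted high is upward closed) to $H^1\big(\P^N,\widetilde{I(X)^{(k+1)}}(d)\big)=0$, and also to surjectivity of the jet-evaluation map $R_d\to\bigoplus_{i=1}^r\big(R/P_i^{k+1}\big)_d$. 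We may assume $L>0$, since otherwise $\e(X)\ge L$ is trivial.

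The inequality $\e(X)\ge L$ is the crux, and I would prove it by adapting the single-point argument of \cite[Theorem~5.1.17]{Laz04} to several points. Fix an irreducible curve $C_0\subset\P^N$ meeting $X$; for $p_i\in C_0$ write $\mu_i=\operatorname{mult}_{p_i}C_0=e\big(\mathfrak m_{p_i,C_0};\mathcal O_{C_0,p_i}\big)$, and put $k=s(X,d)$. The surjection $R_d\twoheadrightarrow\bigoplus_i\big(R/P_i^{k+1}\big)_d$ factors, after restricting to the zero-dimensional subscheme $\bigsqcup_{p_i\in C_0}\operatorname{Spec}\big(\mathcal O_{C_0,p_i}/\mathfrak m_{p_i,C_0}^{k+1}\big)$ of $C_0$, through $H^0\big(C_0,\mathcal O_{C_0}(d)\big)$, which forces the restriction $H^0\big(C_0,\mathcal O_{C_0}(d)\big)\to\bigoplus_{p_i\in C_0}\big(\mathcal O_{C_0,p_i}/\mathfrak m_{p_i,C_0}^{k+1}\big)$ to be surjective as well; hence
\[
h^0\big(C_0,\mathcal O_{C_0}(d)\big)\ \ge\ \sum_{p_i\in C_0}\dim_\kk\big(\mathcal O_{C_0,p_i}/\mathfrak m_{p_i,C_0}^{k+1}\big).
\]
For $d\gg0$ one has $h^0\big(C_0,\mathcal O_{C_0}(d)\big)=d\deg C_0+1-p_a(C_0)\le d\deg C_0+1$, while for $k\gg0$ each $\dim_\kk\big(\mathcal O_{C_0,p_i}/\mathfrak m_{p_i,C_0}^{k+1}\big)$ equals its Hilbert--Samuel polynomial $\mu_i(k+1)+O(1)$. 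Feeding these into the displayed inequality, dividing by $d\sum_i\mu_i$, and letting $d\to\infty$ (so $k=s(X,d)\to\infty$ since $L>0$) gives $\tfrac{\deg C_0}{\sum_i\operatorname{mult}_{p_i}C_0}\ge L$; since it suffices to take the infimum over irreducible curves meeting $X$ (\Cref{def:Seshadri}), we conclude $\e(X)\ge L$.

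The reverse inequality $\e(X)\le L$ is routine given the tools at hand. Let $B$ be the blowup of $\P^N$ at $X$ with $H,E,\mu$ as in \Cref{prop:ample}, and fix a rational $\tfrac{p}{q}<\e(X)$ with $p,q>0$. By \Cref{prop:ample}, $qH-pE$ is ample, so Serre vanishing gives $H^1\big(B,\mathcal O_B(mqH-mpE)\big)=0$ for $m\gg0$. Because $X$ is a reduced set of points, $R^i\mu_*\mathcal O_B(mqH-mpE)=0$ for $i>0$ and $\mu_*\mathcal O_B(mqH-mpE)=\widetilde{I(X)^{(mp)}}(mq)$ (the fibers of $\mu$ are points or copies of $\P^{N-1}$, on which this line bundle restricts to the acyclic $\mathcal O(mp)$), so the Leray spectral sequence yields $H^1\big(\P^N,\widetilde{I(X)^{(mp)}}(mq)\big)=0$. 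By the computation in the proof of \Cref{prop:jet} this says $\reg\big(R/I(X)^{(mp)}\big)\le mq$, i.e.\ $s(X,mq)\ge mp-1$, whence $\tfrac{s(X,mq)}{mq}\ge\tfrac{mp-1}{mq}\to\tfrac{p}{q}$ and therefore $L\ge\tfrac{p}{q}$. Letting $\tfrac{p}{q}\nearrow\e(X)$ gives $L\ge\e(X)$.

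Combining the two inequalities proves $\e(X)=L=\lim_{d\to\infty}\tfrac{s(X,d)}{d}$. I expect the main obstacle to be the first inequality: one must set up the factorization through $H^0\big(C_0,\mathcal O_{C_0}(d)\big)$ carefully and only pass to the limit after $d$ is large enough that the Hilbert--Samuel functions of the $\mathcal O_{C_0,p_i}$ have reached their polynomials and $h^1\big(\mathcal O_{C_0}(d)\big)=0$; this is precisely the point at which Lazarsfeld's single-point reasoning must be upgraded to accommodate several points of $X$ lying on $C_0$ simultaneously, and where the previously established existence of the limit (\Cref{prop:superadd}) simplifies matters, since it removes the need to control $\liminf$ separately. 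The second inequality is essentially a one-line consequence of \Cref{prop:ample} and Serre vanishing.
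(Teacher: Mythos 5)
Your proof is correct, but the first (and harder) inequality $\e(X)\ge L$ is obtained by a genuinely different mechanism than the one used in the paper. You run a dimension count: surjectivity of the jet-evaluation map forces $h^0\big(C_0,\mathcal O_{C_0}(d)\big)\ge\sum_{p_i\in C_0}\dim_\kk\bigl(\mathcal O_{C_0,p_i}/\mathfrak m_{p_i,C_0}^{k+1}\bigr)$, and you compare the two sides asymptotically via Riemann--Roch on $C_0$ and the Hilbert--Samuel polynomials of the local rings $\mathcal O_{C_0,p_i}$. This only yields $\tfrac{\deg C_0}{\sum_i\mu_i}\ge\tfrac{s(X,d)}{d}+O(1/d)$, so you need both $d$ and $k=s(X,d)$ to be large before the estimate kicks in (which you correctly handle by invoking $L>0$; note this is automatic here since $\reg\bigl(I(X)^{(n)}\bigr)<\infty$ so $\widehat{\reg}(I(X))$ is finite). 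The paper instead uses the surjectivity of the jet map to \emph{produce a single explicit form} $F\in I(X)^{(k)}\setminus I(C)$ of degree $d$, and then applies Bezout via the associativity formula for multiplicities, obtaining the exact inequality $\deg(C)\cdot d\ge k\sum_i{\rm mult}_{p_i}C$ for every $d$ in range, with no need for $d,k\gg0$ or Riemann--Roch. Both arguments are valid; the paper's is more algebraic and gives a non-asymptotic bound, while yours stays closer in form to Lazarsfeld's single-point argument via dimension counting. For the reverse inequality you and the paper run essentially the same argument: ampleness of $qH-pE$ from \Cref{prop:ample}, Serre vanishing on the blowup, and descent of the $H^1$ vanishing to $\P^N$ (you spell out the Leray spectral sequence where the paper cites Lazarsfeld), leading to $s(X,mq)\ge mp-1$ and hence $L\ge p/q$.
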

\begin{proof}

Set $d_0=\reg(I(X)^{(2)})$. By \Cref{prop:superadd}~(3) we have that $\underline{s}=\{s(X,d-1)\}_{d\geq d_0}$ is a superadditive sequence of natural numbers. Using  \Cref{lem:subseq}~(3) applied to $s[1]=\{s(X,d)\}_{d\geq d_0-1}$, we see that $\lim_{d\to \infty} \frac{s(d,X)}{d}$ exists. 

Suppose $X=\{p_1,\ldots, p_r\}$ with $I(p_i)=P_i$. Let $C$ be an irreducible curve that contains at least one point  $p_{i_0}\in X$. Assume $d\geq d_0-1$ and set $k= s(X,d)$. Take $\overline{F_i}\in P_i^k/P_i^{k+1}$, one for each $p_i\in X$, so that the image of $\overline{F_{i_0}}$ in $\frac{{P_{i_0}}^k}{P_{i_0}^{k+1}}\otimes_R \frac{R}{I(C)}$ is nonzero. This is possible since\[
\frac{{P_{i_0}}^k}{P_{i_0}^{k+1}}\otimes_R \frac{R}{I(C)}\cong\frac{\overline{P_{i_0}}^k}{\overline{P_{i_0}}^{k+1}}\neq 0 , \text{ where } \overline{P_{i_0}}=\frac{P_{i_0}}{I(C)},
\]
in view of  $\overline{P_{i_0}}^k\neq \overline{P_{i_0}}^{k+1}$ by Krull's intersection theorem.
Due to the surjectivity of the map in equation \eqref{eq:jetsep} recalled below
\[
\kk[x_0,\ldots,x_n]_d \to \bigoplus_{i=1}^r \left(\kk[x_0,\ldots,x_n]/P_i^{k+1}\right)_d ,
\]
 there exists $F\in R_d$ that maps to the tuple $(\overline{F_1}, \ldots, \overline{F_r})$. Since $\overline{F_i}\in P_i^k/P_i^{k+1}$, we have $F\in \bigcap_{i=1}^r P_i^k= I(X)^{(k)}$. Since $\overline{F_{i_0}}$ is nonzero modulo $I(C)$ we have $F\in I(X)^{(k)}\setminus I(C)$. Since $I(C)$ is prime,  $F$ is regular on $R/I(C)$ and thus the associativity formula for multiplicities provides an inequality
\[
\deg(C)\cdot d=e\left(\frac{R}{I(C)+F}\right)\geq \sum_{i=1}^r {\rm mult}_{p_i} C \cdot {\rm mult}_{p_i} F \geq k \left(\sum_{i=1}^r {\rm mult}_{p_i} C \right).
\]
We have shown for each curve $C$ with $C\cap X\neq \emptyset$ and each $d\geq d_0-1$ that the following inequality holds
\[
\frac{\deg(C)}{\sum_{i=1}^r {\rm mult}_{p_i} C} \geq\frac{s(X,d)}{d}, \text{ thus } \e(X) \geq\frac{s(X,d)}{d}.
\]
Taking limits  we deduce
\begin{equation}
\label{eq:Seshadrigreater}
 \e(X) \geq\lim_{d\to \infty} \frac{s(d,X)}{d}.
\end{equation}

It remains to establish the opposite inequality to \eqref{eq:Seshadrigreater}. For this, fix integers $p, q$ with $0<\frac{p}{q}<\e(X)$ and let $\mu:B\to \P^N$ be the blow up of $\P^N$ at $X$, with exceptional divisor $E$ and $H=\mu^*(\mathcal{O}(1))$ as in \Cref{prop:ample}. Then $L_{p,q}=qH- pE$ is ample by the aforementioned result. By asymptotic Serre vanishing \cite[Chapter III, Proposition 5.3]{Hartshorne} we have that there exists $m_0\in\N$ so that 
\[
H^1(B, \mathcal{O}_B(mL_{p,q})) = 0 \text{ for } m\geq m_0.
\]
The leftmost homology group is in turn isomorphic to the one listed below,  by \cite[Lemma 4.3.16]{Laz04}, , where $\I^{(k+1)}$ is the ideal sheaf corresponding to $I(X)^{(k+1)}$. 
Its va\-ni\-shing
\[
H^1(\P^n,  \I^{(mp)}\otimes \mathcal{O}_{\P^N}(mq))=0
\]
indicates via the definition of regularity in terms of local cohomology with respect to the maximal homogeneous ideal $\fm$ or $R=k[\P^N]$ (see \Cref{def:reg}) that 
\begin{eqnarray*}
\reg(I^{(mp)})-1 &=& \reg(R/I^{(mp)})=\soc H^1_\fm(R/I^{(mp)})+1= \soc H^2_\fm(I^{(mp)})+1\\
           &=& \min\{d \mid H^1(\P^n,  \I^{(mp)}\otimes \mathcal{O}_{\P^N}(d))=0\} \leq mq,\\
\text{ so} & & \quad \frac{\reg(I^{(mp)})}{mp}<\frac{mq}{mp}=\frac{q}{p} \text{ for } m\gg0.
\end{eqnarray*}
Taking the limit as $m\to\infty$ we  obtain
$
\widehat{\reg}(I)\leq \frac{q}{p}
$. Equivalently, by \Cref{prop:superadd} it follows that 
\[
 \lim_{d \to \infty} \frac{s(X,d)}{d} =\widehat{\reg}(I)^{-1}\geq \frac{p}{q}.
\]
Replacing $\frac{p}{q}$ by a sequence of rational numbers that converges to $\e(X)$ shows that $ \lim_{d \to \infty} \frac{s(X,d)}{d}\geq \e(X)$ and completes the proof.
\end{proof}

The following corollary recovers a particular instance of the well-known reciprocity between the Seshadri constant and the asymptotic regularity noted in \cite[Remark 1.3 and Theorem B]{CEL01}. Our main contribution here is to show that this reciprocity holds for a very precise structural reason, that is, the duality of the sequences in \Cref{prop:superadd}. 

\begin{cor}
\label{thm:areg}
The asymptotic regularity of a finite set $X$ of $r\geq 2$ points in $\P^N$ with $N\geq 2$ is the reciprocal of the Seshadri constant. In symbols, we have
\[
\widehat{\reg}(I(X))=\e(X)^{-1}.
\]
\end{cor}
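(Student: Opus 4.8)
The plan is to assemble this corollary directly from the three results already established in this section, namely \Cref{prop:jet}, \Cref{prop:superadd}, and \Cref{prop:e}. First I would record that the defining ideal $I(X)$ of a reduced finite set of points is aspCM: it is a saturated ideal with $\dim R/I(X)=1$, and such ideals appear among the aspCM examples listed after \Cref{def:aspCM}. This is exactly what is needed to invoke the regularity-versus-jet-separation duality of \Cref{prop:superadd}.

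Next I would chain the relevant identities. By \Cref{prop:e}, the multipoint Seshadri constant satisfies $\e(X)=\lim_{d\to\infty} s(X,d)/d$, where $s(X,d)$ is the jet separation index of \Cref{def:jets}. Since $r\ge 2$ and $N\ge 2$, \Cref{prop:jet} identifies this with the jet separation sequence of the ideal, $s(X,d)=s(I(X),d)$. Finally, because $I(X)$ is aspCM, \Cref{prop:superadd}~(4) gives $\lim_{d\to\infty} s(I(X),d)/d = \widehat{\reg}(I(X))^{-1}$. Stringing these three equalities together yields $\e(X)=\widehat{\reg}(I(X))^{-1}$, and taking reciprocals (using the conventions $0^{-1}=\infty$ and $\infty^{-1}=0$) produces the claimed identity $\widehat{\reg}(I(X))=\e(X)^{-1}$.

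There is essentially no obstacle here beyond bookkeeping, since the substantive content has been front-loaded into \Cref{prop:superadd} (the sequence duality, ultimately \Cref{lem:DualityProperties}) and into \Cref{prop:e} (the geometric estimates via the blowup, the multipoint Seshadri criterion of \Cref{prop:ample}, and asymptotic Serre vanishing). The one point genuinely worth checking is that the hypotheses line up: \Cref{prop:e} and \Cref{prop:jet} both require $N\ge 2$ and \Cref{prop:jet} requires $r\ge 2$, which are precisely the standing hypotheses of the corollary, while \Cref{prop:superadd}~(4) requires the aspCM property, which we have verified above. It may also be worth remarking that the limit $\widehat{\reg}(I(X))$ — whose existence is not automatic for a general ideal — is guaranteed in this setting by \Cref{lem:CM} together with Fekete's lemma, since all symbolic powers of $I(X)$ have Cohen--Macaulay quotients of dimension one.
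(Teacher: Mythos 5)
Your proposal is correct and follows exactly the route the paper takes: chain \Cref{prop:e}, \Cref{prop:jet}, and \Cref{prop:superadd}~(4) to get $\e(X)=\widehat{\reg}(I(X))^{-1}$ and invert. The only difference is that you spell out the aspCM check for $I(X)$ and the existence of $\widehat{\reg}(I(X))$, which the paper leaves implicit (and incidentally the paper's proof cites ``\Cref{prop:superadd}~(5)'' where it evidently means part~(4), as you correctly have).
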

\begin{proof}
\Cref{prop:superadd} (5) together with \Cref{prop:jet} and \Cref{prop:e} yield the desired conclusion
\[
\widehat{\reg}(I(X))= \left(\lim_{d\to\infty} \frac{s(I(X),d)}{d}\right)^{-1}= \left(\lim_{d\to\infty} \frac{s(X,d)}{d}\right)^{-1}=\e(X)^{-1}.
\]
\end{proof}

\section{Homological reformulations of the Nagata--Iarrobino conjecture}
\label{s:Nagata}
In the following we refer to a {\em very general set of $r$ points} in $\P^N$ to mean outside countably many proper subvarieties of the symmetric product $\Sym^r(\P^N)$ of $\P^N$.

In \cite{Nagata59} Nagata  established the upper bound $\widehat{\alpha}(I(X))\leq \sqrt{r}$ for any set $X$ of $r\geq 9$ very general points in $\P^2$. He also proposed, in different language, the following conjecture to the effect that very general sets of points attain  the maximum value of the Waldschmidt constant permitted by this inequality.  

\begin{conj}[Nagata]
\label{conj:Nag}
Any set $X$ of $r\geq 10 $ very general points in $\P^2$ over a field of characteristic zero satisfies  
$\alpha(I(X)^{(n)})> n\sqrt{r}$  for all $n\in\N$.
Equivalently, there is an equality 
\[
\wa(I(X))=\sqrt{r}.
\]
\end{conj}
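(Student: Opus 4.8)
The statement is Nagata's conjecture, which remains open for $\P^2$ outside a handful of cases (perfect squares $r\ge 16$, and scattered values handled by degeneration techniques of Ciliberto--Miranda, Dumnicki, Harbourne--Ro\'e, and others). So rather than a proof I will describe how the duality of this paper reformulates the conjecture, and isolate where the real difficulty sits. First I would reduce to the asymptotic equality: by Nagata's inequality $\wa(I(X))\le\sqrt r$, which is cited above, it suffices to prove $\wa(I(X))\ge\sqrt r$. Since $\wa=\inf_n\alpha(I(X)^{(n)})/n$, once $\wa=\sqrt r$ is in hand the pointwise strict inequality $\alpha(I(X)^{(n)})>n\sqrt r$ is automatic when $\sqrt r$ is irrational (an integer cannot equal $n\sqrt r$), and is part of the known cases when $r$ is a square. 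So the entire content is the single limit $\wa(I(X))=\sqrt r$ for $X$ very general.

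Next I would pass through \Cref{thm:InverseSystemDualityGeneral}. The symbolic-power filtration $\I=\{I(X)^{(n)}\}$ is differentially closed by \Cref{ex:SymbolicPowersDiffClosed}; in characteristic $0$ the sequence $\{\alpha(I(X)^{(n)})-n\}$ is nondecreasing by \Cref{rem:char0} and is unbounded, since $\alpha(I(X)^{(n)})$ grows at least linearly with slope $\wa\ge 1$. Hence the theorem applies with $\alpha_n=\alpha(I(X)^{(n)})$ and $\beta_s=\reg(S/\L^s(X))$, and by \Cref{ex:LsForSymbolicPowers} we may take $\L^s(X)=\langle L_{p_1}^{s+1},\ldots,L_{p_r}^{s+1}\rangle$ in $S=\kk[y_0,y_1,y_2]$. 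The reciprocity $\wb=\wa/(\wa-1)$ then turns Nagata's conjecture into the equivalent assertion
\[
\lim_{s\to\infty}\frac{\reg\!\left(S/\langle L_{p_1}^{s+1},\ldots,L_{p_r}^{s+1}\rangle\right)}{s}=\frac{\sqrt r}{\sqrt r-1},
\]
that is, that the elementary upper bound of \Cref{cor:boundingWaldschmidt} on the regularity of ideals generated by powers of very general linear forms in three variables is asymptotically sharp.

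I would also record the Seshadri-side reformulation. By \Cref{prop:e} and \Cref{thm:areg}, $\e(X)=\lim_{d\to\infty}s(X,d)/d=\widehat{\reg}(I(X))^{-1}$, so---granting the SHGH-type coincidence $\widehat{\reg}(I(X))=\wa(I(X))$ for very general plane points, which one would also have to establish (only the inequality $\wa\le\widehat{\reg}$ is automatic)---Nagata's conjecture becomes equivalent to $\e(X)=1/\sqrt r$, that is, to $\lim_{d\to\infty}s(X,d)/d=1/\sqrt r$: very general points separate asymptotically $d/\sqrt r$ jets in degree $d$. Via \Cref{prop:superadd} this is once more the same statement, now carried by the regularity sequence $\reg(I(X)^{(k)})$.

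The hard part is, of course, everything: all three versions are faithful translations and none dissolves the obstruction. In each the crux is the uniform lower bound that no plane curve of degree $d$ has multiplicity exceeding $d/\sqrt r$ at each of $r$ very general points, and the only known routes---specialization to special point configurations or degenerations of $\P^2$---reach only sporadic $r$. The homological repackaging above may suggest fresh lines of attack (Hilbert-function and regularity estimates for power ideals in the spirit of Fr\"oberg--Iarrobino, or degenerating the linear forms $L_{p_i}$ themselves), but by itself it supplies no new uniform estimate; the realistic outcome here is a reformulation of the conjecture, not its resolution.
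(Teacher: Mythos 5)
The item you were asked to ``prove'' is Nagata's conjecture, which the paper itself presents as a conjecture and does \emph{not} prove; there is no proof in the paper to compare against. You correctly recognize this and, instead of a proof, give a reformulation discussion --- which is essentially the content of the paper's \Cref{prop:equivalent}. Your first two paragraphs match the paper's treatment: the reduction to $\wa(I(X))\ge\sqrt r$ using the known Nagata upper bound, the observation that the strict pointwise inequality follows when $\sqrt r$ is irrational, and the translation via \Cref{thm:InverseSystemDualityGeneral} and \Cref{ex:LsForSymbolicPowers} into the asymptotic-regularity statement for $\L^s(X)=\langle L_{p_1}^{s+1},\ldots,L_{p_r}^{s+1}\rangle$, i.e.\ the $N=2$ case of \Cref{conj:inverseNag}. (One small imprecision: to see that $\{\alpha(I(X)^{(n)})-n\}$ is unbounded you need $\wa>1$, not merely $\wa\ge 1$; the paper gets this from \Cref{ex:LsFiniteLengthProjectiveVariety}, since $r\ge 10$ very general points in $\P^2$ are non-degenerate.)

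Where you diverge from the paper is the Seshadri-side reformulation. You reach $\e(X)=1/\sqrt r$ only after ``granting $\widehat{\reg}(I(X))=\wa(I(X))$,'' flagged as something one would still have to establish --- which makes your chain circular, since the paper shows in \Cref{prop:equivalent} that this very equality \emph{is} Nagata's conjecture. The paper avoids the circle by importing the unconditional identity $\wa(I(X))=r\,\e(X)$ for very general points (\cite[Lemma~2.3.1]{BH10}), which combined with \Cref{thm:areg} gives $\wa(I(X))\cdot\widehat{\reg}(I(X))=r$ \emph{a priori}; the three statements $\wa=\sqrt r$, $\widehat{\reg}=\sqrt r$, and $\wa=\widehat{\reg}$ are then simultaneously equivalent, with no extra hypothesis to grant. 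Apart from this omission, your account agrees with the paper's: all of this is a faithful repackaging of the conjecture, not a resolution, and you say so correctly.
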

 
This statement holds true for $r$ a perfect square, by Nagata's work in  \cite{Nagata59}, but it remains open for
all other values of $r$. We comment on the equivalence of the two claims in the above conjecture. For $\sqrt{r} \not \in\N$ (the case that is still open), the conjectured inequality for initial degrees in  \Cref{conj:Nag} is equivalent to $\alpha(I(X)^{(n)})\geq n\sqrt{r}$.
Utilizing the known upper bound $\widehat{\alpha}(I_X)\leq \sqrt{r}$ and the description of the Waldschmidt constant as an infimum (see Definition \ref{def:waldschmidt}), we see that the two statements in Conjecture \ref{conj:Nag} are indeed equivalent.

Notable advances on  \Cref{conj:Nag} have been made in \cite{Xu, Tutaj,Harbourne01, HarbourneRoe08}, however in its full generality it currently seems out of reach. See \cite{CHMR13} for further information and possible generalizations of this long-standing conjecture. 
\Cref{conj:Nag} can be equivalently reformulated in terms of the Seshadri constant as
\begin{equation}
\label{eq:nagseshadri}
\e(X)=\frac{1}{\sqrt{r}}.
\end{equation}
The inequality $\e(X)\leq 1/\sqrt{r}$ is known to hold in $\P^2$; this is equivalent to the known upper bound $\widehat{\alpha}(I_X)\leq \sqrt{r}$  by the arguments in the proof of \Cref{prop:equivalent}.
Below we use this equivalence  to give further equivalent homological formulation of Nagata's conjecture. 
Intuitively, in homological terms this conjecture becomes the statement that the width of the Betti table of the symbolic powers of $I(X)$ grows sub-linearly.
 
 \begin{conj}
 \label{conj:homolNag}
 Any set $X$ of $r\geq 10$ very general points in $\P^2$ over a field of characteristic zero satisfies  
 \[
 \widehat{\alpha}(I(X))=\widehat{\reg}(I(X)),
 \quad 
\text{ equivalently }
\quad
 \lim_{n\to\infty} \frac{\reg(I(X)^{(n)})-\alpha(I(X)^{(n)})}{n}=0.
 \]
 \end{conj}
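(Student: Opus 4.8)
The statement of \Cref{conj:homolNag} bundles together an equivalence (``$\widehat{\alpha}(I(X))=\widehat{\reg}(I(X))$'' versus the vanishing of the normalized limit of $\reg - \alpha$) and the assertion itself, and the plan is to separate these. The equivalence is unconditional and can be proved outright; the assertion reduces, via the machinery already developed, to a matching pair of inequalities, one of which is free and the other of which is \Cref{conj:Nag}.

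First I would check that both asymptotic invariants in the statement exist as genuine limits. The family of symbolic powers $\{I(X)^{(n)}\}_{n\in\N}$ is a graded family, so by \Cref{lem:WaldSub} applied to the valuation $\alpha_{\fm}$ the sequence $\{\alpha(I(X)^{(n)})\}$ is subadditive and $\widehat{\alpha}(I(X))$ exists by Fekete's lemma. Since $I(X)$ defines a finite set of points it is aspCM (\Cref{def:aspCM}), so by \Cref{lem:CM} the sequence $\{\reg(I(X)^{(n)})\}$ is subadditive and $\widehat{\reg}(I(X))$ exists as well (this also follows from \Cref{prop:superadd}). With both limits in hand, the normalized difference converges and
\[
\lim_{n\to\infty}\frac{\reg(I(X)^{(n)})-\alpha(I(X)^{(n)})}{n}=\widehat{\reg}(I(X))-\widehat{\alpha}(I(X)).
\]
Thus the left-hand limit vanishes precisely when $\widehat{\alpha}(I(X))=\widehat{\reg}(I(X))$, which establishes the ``equivalently'' in \Cref{conj:homolNag}.

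Next I would record that one of the two inequalities hidden in that equality holds in all cases: for any homogeneous ideal $J$ one has $\alpha(J)\le\reg(J)$, since $\reg(J)$ is at least the largest degree in which $J$ has a minimal generator, which is at least $\alpha(J)$. Taking $J=I(X)^{(n)}$ and dividing by $n$ yields $\widehat{\alpha}(I(X))\le\widehat{\reg}(I(X))$ unconditionally. Hence the entire content of the conjecture is the reverse inequality $\widehat{\reg}(I(X))\le\widehat{\alpha}(I(X))$, equivalently $\reg(I(X)^{(n)})\le\alpha(I(X)^{(n)})+o(n)$.

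To confront that reverse inequality I would translate it into geometry via \Cref{thm:areg}, which gives $\widehat{\reg}(I(X))=\e(X)^{-1}$ unconditionally. The free inequality above then reads $\e(X)\le\widehat{\alpha}(I(X))^{-1}$, so the conjecture is exactly the matching bound $\e(X)\ge\widehat{\alpha}(I(X))^{-1}$, i.e.\ $\e(X)\,\widehat{\alpha}(I(X))=1$. For $r\ge 10$ very general points of $\P^2$ this is \Cref{conj:Nag}: combined with the known upper bound $\widehat{\alpha}(I(X))\le\sqrt r$ it is equivalent to $\widehat{\alpha}(I(X))=\sqrt r$, equivalently to the Seshadri form $\e(X)=1/\sqrt r$ of \Cref{eq:nagseshadri}. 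This is where the genuine difficulty lies: Nagata's conjecture is known only when $r$ is a perfect square, so while this plan proves the internal equivalence and pins down exactly one of the two inequalities, closing the remaining gap requires new progress on Nagata's problem rather than any further formal manipulation of the asymptotic invariants studied here.
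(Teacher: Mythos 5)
The statement you are treating is a conjecture, and the paper does not prove it either; the paper's own contribution is the equivalence with Nagata's conjecture, carried out in \Cref{prop:equivalent}, so that is the right benchmark for your proposal. Your argument is correct in what it claims, and it correctly locates exactly where the open difficulty lies, but it organizes the reduction differently from the paper. The paper combines the very-general-points equality $\widehat{\alpha}(I(X))=r\,\e(X)$ of \cite[Lemma~2.3.1]{BH10} with \Cref{thm:areg} to get the product identity $\widehat{\alpha}(I(X))\cdot\widehat{\reg}(I(X))=r$, from which $\widehat{\alpha}(I(X))=\sqrt r\iff\widehat{\alpha}(I(X))=\widehat{\reg}(I(X))$ is immediate; it never isolates an unconditional inequality between the two invariants. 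You instead split the conjectured equality into the free inequality $\widehat{\alpha}(I(X))\le\widehat{\reg}(I(X))$ (from $\alpha(J)\le\reg(J)$), rewrite the hard inequality as $\e(X)\,\widehat{\alpha}(I(X))\ge 1$ via \Cref{thm:areg}, and then match it with Nagata through the Seshadri form \eqref{eq:nagseshadri}; your verification that both limits exist (Fekete plus the aspCM property of point ideals) also makes the internal ``equivalently'' cleaner than the paper's one-line remark. What each approach buys: the paper's product identity gives the equivalence in one stroke, while your decomposition makes visible which half of the equality is formal and which half is genuinely Nagata. One point you should make explicit: the passage between $\e(X)\,\widehat{\alpha}(I(X))=1$ and $\widehat{\alpha}(I(X))=\sqrt r$ is not purely formal. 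The direction from Nagata to $\e(X)=1/\sqrt r$ (hence to the product being $1$) requires the very-general equality $\widehat{\alpha}(I(X))=r\,\e(X)$ — exactly the \cite{BH10} input the paper cites — and your appeal to \eqref{eq:nagseshadri} imports it silently; the known upper bound $\widehat{\alpha}(I(X))\le\sqrt r$ alone would not give that direction.
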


\noindent Iarrobino \cite{iarrobino97} generalized \Cref{conj:Nag} to projective spaces of arbitrary dimension.

\begin{conj}[Iarrobino]
\label{conj:Iar}
A set $X$ of $r$ very general points in the projective space $\P^N$ over a field of characteristic zero with $r\geq \max\{N+5, 2^N\}$ and $(r,N)\not \in\{(7,2), (8,2), (9,3)\}$ satisfies $\alpha(I(X)^{(n)})\geq n\sqrt[N]{r}$  for all  $n\in \N$. Equivalently, apart from the given list of exceptions, there is an equality 
\[
\wa(I(X))=\sqrt[N]{r}.
\]
\end{conj}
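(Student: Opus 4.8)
The goal is to prove that for a very general set $X$ of $r$ points in $\P^N$ in the stated range one has $\alpha(I(X)^{(n)})\ge n\sqrt[N]{r}$ for all $n\in\N$, equivalently $\wa(I(X))\ge\sqrt[N]{r}$ — these are the same assertion because $\wa(I(X))=\inf_n\alpha(I(X)^{(n)})/n$ by \Cref{def:waldschmidt} — which combined with the complementary upper bound $\wa(I(X))\le\sqrt[N]{r}$ (classical for $N=2$ by Nagata, and known in the stated range for general $N$) yields the equality. The plan is to first translate the lower bound into the dual language of inverse systems from \Cref{sec:InverseSystems}, and then to attack the translated statement by a semicontinuity-and-degeneration scheme. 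For the translation: a very general point set is non-degenerate, so the hypotheses of \Cref{thm:InverseSystemDualityGeneral} are met for $\I=\{I(X)^{(n)}\}_{n\in\N}$, and since $t\mapsto t/(t-1)$ is strictly decreasing, that theorem together with \Cref{ex:LsForSymbolicPowers} shows that $\wa(I(X))\ge\sqrt[N]{r}$ is \emph{equivalent} to the sharp socle bound
\[ \soc\big(S/\cL^s(X)\big)\;\le\;\frac{s\sqrt[N]{r}}{\sqrt[N]{r}-1}\qquad\text{for every }s\ge 1, \]
where $\cL^s(X)=\langle L_{p_1}^{s+1},\ldots,L_{p_r}^{s+1}\rangle\subset S=\kk[y_0,\ldots,y_N]$ is the ideal of $(s+1)$-st powers of the general linear forms dual to the points; by \Cref{cor:boundingWaldschmidt} this is also the face of the conjecture that controls the lower bound directly. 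One may equally well phrase the target as the Seshadri inequality $\e(X)\ge 1/\sqrt[N]{r}$ (the boundary case of ampleness in \Cref{prop:ample}) or, conjecturally equivalently, as $\widehat{\reg}(I(X))\le\sqrt[N]{r}$ via \Cref{thm:areg}; but the primal vanishing statement $(I(X)^{(n)})_d=0$ for $d<n\sqrt[N]{r}$ is the form most amenable to degeneration, so that is where I would carry out the argument, using the dual faces as bookkeeping.

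The engine is semicontinuity. As $X$ varies in $\Sym^r(\P^N)$, the function $\dim_\kk(I(X)^{(n)})_d$ is upper semicontinuous, since $(I(X)^{(n)})_d=\ker\big(R_d\to\bigoplus_{i=1}^r(R/P_i^n)_d\big)$ is the kernel of a matrix whose entries are polynomials in the coordinates of the $p_i$, so its rank is generically maximal. Hence, to prove $(I(X)^{(n)})_d=0$ for very general $X$ it suffices to exhibit a single — possibly special, possibly non-reduced — scheme $Z\subset\P^N$ of length $r\binom{n+N-1}{N}$, supported at finitely many points and arising as a flat limit of $r$ general fat points of multiplicity $n$, with $H^0(\mathcal{I}_Z(d))=0$. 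First I would build such a $Z$ by the differential Horace method of Hirschowitz: collide a prescribed number of the fat points onto a hyperplane $\Lambda\cong\P^{N-1}$ and use the residuation sequence
\[ 0\longrightarrow\mathcal{I}_{\operatorname{Res}_\Lambda Z}(d-1)\longrightarrow\mathcal{I}_Z(d)\longrightarrow\mathcal{I}_{\operatorname{Tr}_\Lambda Z,\,\Lambda}(d)\longrightarrow 0 \]
to reduce $H^0(\mathcal{I}_Z(d))=0$ to the two vanishings $H^0(\mathcal{I}_{\operatorname{Res}_\Lambda Z}(d-1))=0$ in $\P^N$ and $H^0(\mathcal{I}_{\operatorname{Tr}_\Lambda Z,\Lambda}(d))=0$ in $\P^{N-1}$, and then recurse on the triple $(N,n,d)$, the base cases being $N=1$ (trivial) and multiplicity $n\le 2$ (the Alexander--Hirschowitz theorem on generic double points). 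At each stage the choice of how many points to specialize, and of how many to differentiate in the Horace sense, is dictated by keeping $d-1$ just below $(n-1)\sqrt[N]{r}$, respectively below the analogous threshold in $\P^{N-1}$. Transported back through \Cref{thm:InverseSystemDualityGeneral}, each collision corresponds to forcing several of the $L_{p_i}$ into a common hyperplane, and the superadditivity of $s\mapsto\soc(S/\cL^s(X))$ recorded in \Cref{rem:betasuperadditive} means it is enough to run the argument for a cofinal set of pairs $(n,d)$ rather than for all of them.

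The step I expect to be the genuine obstacle — and the reason \Cref{conj:Iar} has resisted proof for so long — is exactly the verification that the residual and trace schemes produced by a collision again have the expected postulation, so that the recursion closes. For $N=2$ this is essentially the SHGH (Segre--Harbourne--Gimigliano--Hirschowitz) conjecture, itself open; for $N\ge 3$ and multiplicity $n\ge 3$ there is no available classification of the unexpected special linear systems that a collision can create, and ruling these out is tantamount to the conjecture one is trying to prove. I therefore do not expect the scheme above to settle \Cref{conj:Iar} outright. What it should produce, with the machinery of this paper, is a family of partial and conditional results: the reformulation of the first paragraph reduces Iarrobino's conjecture to a homological statement about the Betti tables of the symbolic powers $I(X)^{(n)}$ — equivalently, about Hilbert functions of power ideals of general linear forms — so that partial progress (new ranges of $r$; the perfect-$N$-th-power cases recoverable by the Cremona-transformation argument Nagata used; or asymptotic lower bounds $\wa(I(X))\ge\sqrt[N]{r}-o(\sqrt[N]{r})$ obtained from a single well-chosen toric degeneration) propagates automatically through \Cref{prop:superadd} and \Cref{thm:areg} into matching statements for $\widehat{\reg}(I(X))$, for the Seshadri constant $\e(X)$, and for the jet separation sequence of $X$.
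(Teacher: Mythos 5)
The statement you were asked about is \Cref{conj:Iar}, which the paper presents as a \emph{conjecture}: it offers no proof, and explicitly records that the only known case is $r=s^N$ (citing Evain). Your proposal does not prove it either, and you say so yourself, so the honest verdict is that there is a genuine gap rather than a complete argument taking a different route. The parts of your write-up that are solid are the reductions: the equivalence of the two formulations in the conjecture via $\wa(I(X))=\inf_n \alpha(I(X)^{(n)})/n$ together with the known upper bound $\wa(I(X))\le\sqrt[N]{r}$; the translation, through \Cref{thm:InverseSystemDualityGeneral}, \Cref{ex:LsForSymbolicPowers} and \Cref{cor:boundingWaldschmidt}, into the socle bound for the power ideals $\cL^s(X)=\langle L_{p_1}^{s+1},\ldots,L_{p_r}^{s+1}\rangle$ (this is exactly the content of the paper's \Cref{conj:inverseNag} and \Cref{prop:equivalent}); and the standard semicontinuity observation that it suffices to exhibit one special scheme $Z$ with $H^0(\mathcal{I}_Z(d))=0$. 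None of this is new leverage on the conjecture: it is bookkeeping that transports the statement between equivalent faces.

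The missing idea is the one you flag in your third paragraph, and it is not a technical loose end but the entire mathematical content of the conjecture. The differential Horace recursion only closes if, at every stage, the trace scheme in $\P^{N-1}$ and the residual scheme in $\P^N$ impose independent conditions in the relevant degrees; verifying this for fat points of multiplicity $n\ge 3$ in $\P^N$, $N\ge 2$, in the full range of $(n,d)$ required, is precisely the open problem (for $N=2$ it subsumes the SHGH conjecture, and for $N\ge 3$ there is no classification of the special systems that collisions can create). In other words, the step ``the residual and trace schemes again have the expected postulation'' is logically equivalent to the statement being proved, so the scheme you outline is circular at its core. A correct assessment of your submission is therefore: a faithful reformulation plus a well-known but unexecutable strategy, matching the paper in that the paper, too, leaves \Cref{conj:Iar} open and only records its equivalence with the homological reformulation.
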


\Cref{conj:Iar} is known to hold only for the case $r=s^N$; see \cite{Evain}. 

Below we use our results to reformulate the above conjecture \Cref{conj:Iar} in homological terms  using inverse systems.

 \begin{conj}
 	\label{conj:inverseNag}
 Under the hypotheses of \Cref{conj:Iar} the following holds
	 	\[
 	\lim_{s\to\infty} \frac{\reg(S/\L^s(X))}{s}=\frac{\sqrt[N]{r}}{\sqrt[N]{r}-1},
 	\]
 	where $\L^s(X)=\langle L_{p_1}^{s+1},\ldots,L_{p_r}^{s+1}\rangle \subset S=\kk[y_0,\ldots, y_N]$.
 \end{conj}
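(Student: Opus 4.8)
The plan is to deduce Conjecture~\ref{conj:inverseNag} from \Cref{thm:InverseSystemDualityGeneral}, applied to the filtration of symbolic powers of $I(X)$; more precisely, to show that the displayed identity is \emph{equivalent} to Iarrobino's Conjecture~\ref{conj:Iar}, exactly as Conjecture~\ref{conj:homolNag} reformulates Conjecture~\ref{conj:Nag}. Since \Cref{conj:Iar} is open, an unconditional proof of the displayed equation is out of reach; what one can prove is this equivalence, and that is what I would aim for.

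First I would install the setup required by \Cref{thm:InverseSystemDualityGeneral}. Working over a field of characteristic $0$ with $I(X)$ radical, the filtration $\I=\{I(X)^{(n)}\}_{n\ge 1}$ of proper homogeneous ideals is differentially closed by \Cref{ex:ZariskiMainLemmaHolomorphic} (equivalently \Cref{ex:SymbolicPowersDiffClosed}), and \Cref{ex:LsForSymbolicPowers} identifies $\L^s(\I)$ with the ideal $\L^s(X)=\langle L_{p_1}^{s+1},\ldots,L_{p_r}^{s+1}\rangle\subset S$ from the statement. Because $r\ge N+5>N$ very general points span $\P^N$, the configuration $X$ is non-degenerate, so \Cref{ex:LsFiniteLengthProjectiveVariety} shows $S/\L^s(X)$ has finite length for every $s\ge 1$; hence $\soc(S/\L^s(X))=\reg(S/\L^s(X))$, and the sequence $\beta_s$ of \Cref{thm:InverseSystemDualityGeneral} equals $\reg(S/\L^s(X))$, whose growth factor $\wb$ is the left-hand side of the conjectured identity.

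Next I would check the two hypotheses of \Cref{thm:InverseSystemDualityGeneral} on $\alpha_n=\alpha(I(X)^{(n)})$: that $\{\alpha_n-n\}$ is nondecreasing and unbounded above. The first is automatic in characteristic $0$ by \Cref{rem:char0}. The second reduces to showing that the Waldschmidt constant $\wa=\wa(I(X))$ satisfies $\wa>1$, since then $\alpha_n-n\ge(\wa-1-\e)n$ for $n\gg0$ (any fixed $\e\in(0,\wa-1)$) forces unboundedness. Here I would argue that no linear form vanishes at $r\ge N+1$ very general points, so $\alpha(I(X))\ge 2$, and then invoke a Chudnovsky-type lower bound $\wa\ge(\alpha(I(X))+N-1)/N\ge(N+1)/N>1$, which is known for very general points. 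With the hypotheses verified, \Cref{thm:InverseSystemDualityGeneral}---together with the identification of $\wa$ with the Waldschmidt constant recorded in \Cref{ex:WaldschmidtDuality}---gives the reciprocity $\wb=\wa/(\wa-1)$, that is, $\lim_{s\to\infty}\reg(S/\L^s(X))/s=\wa/(\wa-1)$.

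The last step is formal: the map $\phi(t)=t/(t-1)$ is a strictly decreasing involution of $(1,\infty)$, hence injective there; since $\wa>1$ and $\sqrt[N]{r}\ge 2>1$, the identity $\wb=\sqrt[N]{r}/(\sqrt[N]{r}-1)=\phi(\sqrt[N]{r})$ holds if and only if $\wa(I(X))=\sqrt[N]{r}$, which is precisely \Cref{conj:Iar}. I expect the only genuine friction to be the input $\wa(I(X))>1$ (equivalently, unboundedness of $\{\alpha_n-n\}$): \Cref{thm:InverseSystemDualityGeneral} does not apply when $\wa=1$, where its conclusion would read $\wb=\infty$, so this lower bound from the theory of Waldschmidt constants of general points is essential---though for the large values of $r$ allowed in \Cref{conj:Iar} it is well within reach. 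All remaining ingredients---identifying $\L^s(X)$, finite length, the $\soc=\reg$ comparison, and the duality formula---are already available from the results established above.
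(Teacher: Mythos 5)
Your argument is correct and follows essentially the same route as the paper: Proposition~\ref{prop:equivalent} establishes exactly this equivalence, and its proof for the Iarrobino direction is the one-line observation that it ``follows immediately from the duality of asymptotic invariants given by \Cref{thm:InverseSystemDualityGeneral}''---which is what you carry out, with the details filled in. One small remark: your appeal to a Chudnovsky-type bound to secure $\wa(I(X))>1$ is superfluous, since you have already observed (via \Cref{ex:LsFiniteLengthProjectiveVariety}) that non-degeneracy of $X$ gives finite length of $S/\L^s(X)$ for all $s\ge 1$, and \Cref{lem:FiniteLength} states that this finite-length condition is \emph{equivalent} to $\{\alpha_n-n\}$ being unbounded above, so the needed hypothesis is already in hand without any external input from the theory of Waldschmidt constants.
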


 \begin{prop}
 \label{prop:equivalent}
 Conjectures \ref{conj:Iar} and \ref{conj:inverseNag} are equivalent. Moreover, conjectures \ref{conj:Nag} and \ref{conj:homolNag} are equivalent. 
 \end{prop}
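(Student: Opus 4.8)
The plan is to read off \Cref{conj:Iar} $\Leftrightarrow$ \Cref{conj:inverseNag} from \Cref{thm:InverseSystemDualityGeneral} applied to symbolic powers, and to reduce \Cref{conj:Nag} $\Leftrightarrow$ \Cref{conj:homolNag} to the already-recorded equivalence of \Cref{conj:Nag} with the Seshadri reformulation \eqref{eq:nagseshadri}, using \Cref{thm:areg}. For the first equivalence I would take $\I=\{I(X)^{(n)}\}_{n\ge 1}$, which is a differentially closed graded filtration of proper homogeneous ideals by \Cref{ex:SymbolicPowersDiffClosed}. The hypotheses of \Cref{conj:Iar} force $r\ge N+1$, so the very general set $X$ is non-degenerate; hence by \Cref{ex:LsFiniteLengthProjectiveVariety} each $S/\cL^s(X)$ has finite length (so $\soc$ equals $\reg$ there) and by \Cref{lem:FiniteLength} the sequence $\{\alpha(I(X)^{(n)})-n\}$ is unbounded above, while \Cref{rem:char0} makes it nondecreasing. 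By \Cref{ex:LsForSymbolicPowers} we have $\cL^s(\I)=\langle L_{p_1}^{s+1},\ldots,L_{p_r}^{s+1}\rangle=\cL^s(X)$. Thus \Cref{thm:InverseSystemDualityGeneral} applies and yields $\wb=\frac{\wa}{\wa-1}$ and $\wa=\frac{\wb}{\wb-1}$, where $\wa=\wa(I(X))$ is the Waldschmidt constant and $\wb=\lim_{s\to\infty}\frac{\reg(S/\cL^s(X))}{s}$. Since $\sqrt[N]{r}>1$, substituting $\wa=\sqrt[N]{r}$ into the first identity gives $\wb=\frac{\sqrt[N]{r}}{\sqrt[N]{r}-1}$, and substituting $\wb=\frac{\sqrt[N]{r}}{\sqrt[N]{r}-1}$ into the second returns $\wa=\sqrt[N]{r}$; as \Cref{conj:Iar} asserts $\wa(I(X))=\sqrt[N]{r}$ and \Cref{conj:inverseNag} asserts $\wb=\frac{\sqrt[N]{r}}{\sqrt[N]{r}-1}$, the two are equivalent.

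For the second equivalence I would argue that \eqref{eq:nagseshadri}, i.e.\ $\e(X)=1/\sqrt r$, is equivalent to \Cref{conj:homolNag}; combined with the (already recorded) equivalence of \Cref{conj:Nag} with \eqref{eq:nagseshadri}, this finishes. Since $X$ is a point set, $I(X)$ is aspCM, so $\widehat{\reg}(I(X))$ exists by \Cref{lem:CM} and Fekete's lemma; then $\lim_{n}\frac{\reg(I(X)^{(n)})-\alpha(I(X)^{(n)})}{n}=\widehat{\reg}(I(X))-\widehat{\alpha}(I(X))$, so \Cref{conj:homolNag} is the equality $\widehat{\alpha}(I(X))=\widehat{\reg}(I(X))$, which by $\widehat{\reg}(I(X))=\e(X)^{-1}$ (\Cref{thm:areg}) and $\widehat{\alpha}(I(X))=\wa(I(X))$ is the single relation $\wa(I(X))\,\e(X)=1$. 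The one geometric ingredient I would need is the inequality $\e(X)\le \wa(I(X))/r$: given a nonzero $F\in (I(X)^{(n)})_{\alpha(I(X)^{(n)})}$, factor $F$ into irreducibles, discard the components missing $X$, and apply the mediant inequality $\min_j \frac{a_j}{b_j}\le \frac{\sum_j a_j}{\sum_j b_j}$ to the degrees and total multiplicities of the remaining components; this produces an irreducible curve $C$ meeting $X$ with $\deg(C)/\sum_i \operatorname{mult}_{p_i}C \le \alpha(I(X)^{(n)})/(rn)$, and $n\to\infty$ gives $\e(X)\le \wa(I(X))/r$. (Together with Nagata's theorem $\wa(I(X))\le\sqrt r$ this recovers the bound $\e(X)\le 1/\sqrt r$ and is the elementary half of the equivalence of the two known bounds mentioned before the statement.)

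Granting this, if \eqref{eq:nagseshadri} holds then $\widehat{\reg}(I(X))=\e(X)^{-1}=\sqrt r$, while $\e(X)\le\wa(I(X))/r$ forces $\wa(I(X))\ge\sqrt r$; since also $\wa(I(X))=\widehat{\alpha}(I(X))\le\widehat{\reg}(I(X))=\sqrt r$ (from $\reg\ge\alpha$), we get $\widehat{\alpha}(I(X))=\widehat{\reg}(I(X))$, i.e.\ \Cref{conj:homolNag}. Conversely, if \Cref{conj:homolNag} holds then $\wa(I(X))=\e(X)^{-1}$, so $\e(X)\le \wa(I(X))/r=\frac{1}{r\,\e(X)}$ gives $\wa(I(X))^2=\e(X)^{-2}\ge r$, while Nagata's theorem gives $\wa(I(X))^2\le r$; hence $\wa(I(X))=\sqrt r$ and therefore $\e(X)=\wa(I(X))^{-1}=1/\sqrt r$, i.e.\ \eqref{eq:nagseshadri}. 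Feeding in \Cref{conj:Nag} $\Leftrightarrow$ \eqref{eq:nagseshadri} completes the second equivalence.

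The one step I do not expect to be routine is precisely this last input: that \Cref{conj:Nag} is equivalent to \eqref{eq:nagseshadri}, i.e.\ that $\wa(I(X))=\sqrt r$ forces $\e(X)=1/\sqrt r$. One direction of it ($\e(X)=1/\sqrt r\Rightarrow \wa(I(X))=\sqrt r$) is elementary via the mediant inequality plus Nagata's bound, as above, but the reverse is the classical deep fact about very general points and will be cited rather than reproved here. Everything else — the entire first equivalence, and the reduction of the second equivalence to \eqref{eq:nagseshadri} — is bookkeeping around \Cref{thm:InverseSystemDualityGeneral}, \Cref{thm:areg}, Nagata's theorem, and the mediant inequality, the only technical care being the handling of reducible and non-reduced forms in the mediant argument.
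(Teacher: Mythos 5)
Your treatment of the first equivalence (\Cref{conj:Iar} $\Leftrightarrow$ \Cref{conj:inverseNag}) matches the paper's: apply \Cref{thm:InverseSystemDualityGeneral} to $\I=\{I(X)^{(n)}\}$; you verify the hypotheses (non-degeneracy, \Cref{lem:FiniteLength}, \Cref{rem:char0}, \Cref{ex:LsForSymbolicPowers}) which the paper leaves implicit, so this part is the same argument with more detail. Your second equivalence takes a genuinely different and somewhat more roundabout route. The paper cites \cite[Lemma~2.3.1]{BH10} for the unconditional \emph{equality} $\widehat{\alpha}(I(X))=r\,\e(X)$ for very general points; combined with $\widehat{\reg}(I(X))=\e(X)^{-1}$ from \Cref{thm:areg} this gives $\widehat{\alpha}\cdot\widehat{\reg}=r$, after which the equivalence of ``$\widehat{\alpha}=\sqrt{r}$'' and ``$\widehat{\alpha}=\widehat{\reg}$'' is pure algebra and does not need Nagata's upper bound. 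You instead prove only the elementary \emph{inequality} $\e(X)\le\widehat{\alpha}(I(X))/r$ (your mediant argument is a careful version of the curve-comparison the paper also uses; note the mediant refinement is not strictly needed since \Cref{def:Seshadri} takes the infimum over all, possibly reducible, curves), and then compensate by invoking Nagata's bound $\widehat{\alpha}\le\sqrt{r}$, the trivial inequality $\alpha\le\reg$, and a citation of the hard implication $\widehat{\alpha}(I(X))=\sqrt{r}\Rightarrow\e(X)=1/\sqrt{r}$. That hard implication is exactly the content of the BH10 equality in this context, so both proofs rest on the same deep geometric input about very general points; your framing just slices it as one inequality proved plus one implication cited, at the cost of also needing Nagata's bound, which the paper avoids. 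One small caution: when you say ``Conj~\ref{conj:Nag} $\Leftrightarrow$ \eqref{eq:nagseshadri} is already recorded,'' that reformulation is asserted in the text before the proposition but is actually justified \emph{inside} the paper's proof via BH10, so you should cite \cite[Lemma~2.3.1]{BH10} directly rather than treat it as independently established.
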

 \begin{proof}
 The equivalence of Conjecture~\ref{conj:inverseNag} to Conjecture~\ref{conj:Iar} follows immediately from the duality of asymptotic invariants given by \Cref{thm:InverseSystemDualityGeneral}.

Now we show the equivalence of Conjectures~\ref{conj:Nag} and~\ref{conj:homolNag}.  From \Cref{def:Seshadri} one sees that there is an inequality relating the  Waldschmidt constant, and the  Seshadri constant
 \begin{equation}
 \label{eq:WaldSesh}
 \widehat{\alpha}(X) \geq r \e(X)
 \end{equation}
  Indeed, let $C$ be a curve in $\P^2$ with  $\deg(C)=\alpha(I^{(n)})$ and ${\rm mult}_{p_i} C=n$ for each $i$. Then one has $\e(X)\leq \frac{\alpha(I^{(n)})}{nr}$ by the definition of $\e(X)$ and the inequality follows by passing to the limit.
 While equality need not hold in \eqref{eq:WaldSesh} in general, remarkably equality does hold for  a very general set of points $X$; see \cite[Lemma 2.3.1]{BH10}; thus under the hypotheses of our conjectures we have $ \widehat{\alpha}(I(X))=r\e(X)$. 
 This justifies the equivalence of \Cref{conj:Nag} asserting  $\widehat{\alpha}(I(X))=\sqrt{r}$ and the identity \eqref{eq:nagseshadri} mentioned above.

 Rewriting the identity $ \widehat{\alpha}(I(X))=r\e(X)$ using  \Cref{thm:areg} yields 
 \[ \widehat{\alpha}(I(X))\cdot \widehat{\reg}(I(X))=r.\]
It follows that the claim $\widehat{\alpha}(I(X))=\sqrt{r}$ of  \Cref{conj:Nag} is equivalent to $\widehat{\reg}(I(X)) =\sqrt{r}$ and also equivalent to $\widehat{\alpha}(I(X))=\widehat{\reg}(I(X))$.  The second claim of \Cref{conj:homolNag} follows from feeding the definitions of these asymptotic invariants into the equality.
\end{proof}

\begin{ex}
Here we illustrate some of the exceptions to  \Cref{conj:Iar} and \Cref{conj:inverseNag}.
By contrast to  \Cref{conj:Iar}, which predicts irational values for the Waldschmidt constant whenever $\sqrt[N]{r}\not \in\N$ , the Waldschmidt constant and the asymptotic regularity for sets $X$ of  few general points in $\P^N$  are given by rational functions in the number of points,  in particular they are rational numbers given by
\[
\wa(I_X)=
\begin{cases}
\frac{r}{r-1} & \text{ if } \#X=r\leq N+1,\\
\frac{r}{r-2} & \text{ if } \#X=r=N+2,\\
\frac{r-1}{r-3} & \text{ if } \#X=r=N+3 \text { is even}, \\
\frac{r(r-2)}{r^2-4r+2} & \text{ if } \#X=r=N+3 \text{ is odd}.
\end{cases}
\]
See \cite[Proposition B.1.1]{DHST14} for the last three cases and \cite[Proposition 5.1]{NT19} for a more general result in this direction. Utilizing the formulas in \Cref{thm:InverseSystemDualityGeneral} we obtain the asymptotic growth factor for the regularity of the inverse systems $\L^s(Z)$  
\[
\lim_{s\to\infty} \frac{\reg(S/\L^s(X))}{s}=
\begin{cases}
r & \text{ if } \#X\leq N+1,\\
r/2 & \text{ if } \#X=r=N+2,\\
(r-1)/2 & \text{ if } \#X=r=N+3 \text { is even}, \\
r(r-2)/2(r-1) & \text{ if } \#X=r=N+3 \text{ is odd}.
\end{cases}
\]
The same result can be derived from \cite[Theorem 4.4 and Theorem 4.7]{NT19}. 
\end{ex}

\section{Closing comments and invitations for future work}
\label{s:open}

We close with a number of questions which arose in the process of our writing.  The first two questions concern the subadditivity of sequences associated to the symbolic powers of an ideal.  We saw in \cref{sec:SequencesFromGradedFamilies} that if $\nu:R\to\Z$ is an $R$-valuation then the sequence $\nu(I^{(n)})$ is subadditive for any ideal $I\subset R$, and we relate sequences of this form to the \textit{resurgence} $\rho(I)$ and \textit{asymptotic resurgence} $\wrho(I)$.  In \cref{sec:resurgence} we define
\[
\lambda_n(I)=\max\{d:I^{(d)}\not\subseteq I^n\}\quad\mbox{and}\quad\beta_n(I)=\max\{d:I^{(d)}\not\subseteq \overline{I^n}\},
\]
where $\overline{I^n}$ is the integral closure of $I^n$.  We have examples of ideals $I$ where $\lambda_n(I)$ is not superadditive since $\rho(I)=\lim_{n\to\infty} \frac{\lambda_n}{n}\neq \sup\{\lambda_n/n\}=\wrho(I)$.  However, the sequence $\beta_n$ necessarily satisfies $\lim_{n\to\infty}\frac{\beta_n}{n}=\sup\{\beta_n/n\}=\wrho(I)$ by \Cref{prop:betahat=rhohat}, which is one of the properties of a superadditive sequence.  Thus it seems natural to ask if $\beta_n(I)$ is a superadditive sequence.

\begin{question}\label{ques:superadditiveintegralclosure}
For an ideal $I$ in a regular ring $R$, is the sequence $\beta_n(I) =\max\{d:I^{(d)}\not\subseteq \overline{I^n}\}$ a superadditive sequence?
\end{question}

If $I$ is an ideal so that~\Cref{ques:superadditiveintegralclosure} has a negative answer, then the failure of containment $I^{(d)}\not\subseteq \overline{I^n}$ is necessarily detected by \textit{different} valuations as $n$ increases, which is an interesting behavior.

Our next question concerns the (Castelnuovo-Mumford) regularity of symbolic powers.  If all symbolic powers of an ideal $I$ are Cohen-Macaulay, \Cref{lem:CM} shows that $\reg(I^{(n)})$ is a subadditive sequence, while \Cref{ex:nonsubreg} shows that this sequence may not be subadditive even if $I$ is a squarefree monomial ideal.  This example is not so far from being subadditive, however, which leads us to the following question.

\begin{question}
For a radical ideal $I$ in a polynomial ring, is the sequence $\reg(I^{(n)})+K$ a subadditive sequence for some appropriate integer $K$?  In particular, is this true if $K$ is the number of variables in the polynomial ring?
\end{question}

In \Cref{ex:nonsubreg}, a calculation shows that $\reg(J(m,s)^{(t)})+K$ is subadditive for any $K\ge (m-2)(s-1)$; the number of variables in the ambient polynomial ring is $m(s+1)$.

Our next question concerns the differentially closed graded filtrations of ideals introduced in~\cref{sec:InverseSystems}.  If $\I=\{I_n\}_{n\ge 1}$ is a differentially closed graded filtration of ideals in $R$, we found in \Cref{thm:InverseSystemDualityGeneral} a duality between the sequences $\a_n=\alpha(I_n)$ and $\b_r=\soc(\mathcal{D}/\cL^r(\I))$ (with the contraction operation) or $\b_r=\reg(S/\cL^r)$ (with the differentiation action).  This duality of sequences arose from Macaulay-Matlis duality.  Following the discussion of \cref{sec:SequencesFromGradedFamilies}, we note that $\alpha(I_n)$ is a special case of the sequence $\nu(I_n)$ for an $R$-valuation $\nu:R\to \Z$.  In general, $\nu(I_n)$ is subadditive and, in case $I_n=I^{(n)}$ for a fixed ideal $I$, its asymptotic growth factor can be used to bound or find the asymptotic resurgence of $I$ (\Cref{lem:valuationLimit} and \Cref{prop:betahat=rhohat}).  With this in mind, we ask the following open-ended question.

\begin{question}
Suppose $\I=\{I_n\}_{n\ge 1}$ is a differentially closed graded family.  Does Macaulay-Matlis duality give a meaningful algebraic interpretation for the sequence $\overrightarrow{\nu(I_n)}$ for an arbitrary valuation $\nu:R\to\Z$, extending \Cref{thm:InverseSystemDualityGeneral}?  If not, can the valuation $\nu$ be used to \textit{twist} Macaulay-Matlis duality in a way that does give a meaningful interpretation of $\overrightarrow{\nu(I^{(n)})}$?  As in \Cref{thm:InverseSystemDualityGeneral}, we likely need to shift the sequence $\nu(I_n)$ appropriately.
\end{question}

If $\I=\{I_n\}$ consists of the symbolic powers of a radical ideal over an algebraically closed field, then Emsalem and Iarrobino~\cite{EI95} give a concrete description for the ideal $\cL^s(\I)$.  Inspired by their description, we pose the following question.

\begin{question}\label{ques:DualGens}
If $\I=\{I_n\}_{n\ge 1}$ is a differentially closed graded filtration, under what conditions can we give a concrete description of the generators of $\cL^s(\I)$?  Under what conditions do the generators have a geometric interpretation?
\end{question}

From the end of \cref{sec:InverseSystems}, we have a large pool of differentially closed graded filtrations for which we can ask \Cref{ques:DualGens}.

If $\I=\{I_n\}_{n\ge 1}$ is a graded family of monomial ideals, then one may associate to $\I$ its \textit{Newton-Okounkov body}~\cite{Ha-Nguyen-Okounkov}.  For instance, if $\I$ consists of the symbolic powers of a monomial ideal $I$, the Newton-Okounkov body of $\cI$ is the \textit{symbolic polyhedron} introduced in~\cite{CEHH}.  It is natural to ask if there is an appropriate \textit{dual body} for the family $\cL^s(\I)$.  We plan to address aspects of the following question in an upcoming paper.

\begin{question}
If $\I$ is a differentially closed graded family of monomial ideals, is there an associated convex body which encodes the monomials not in $\cL^s(\I)$?  If so, when do these convex bodies limit to a polyhedron (like the symbolic polyhedron)?  In what situations can we determine the bounding inequalities?
\end{question}

In \cref{s:Nagata}, we saw a number of reformulations of the Nagata conjecture concerning the Waldschmidt constant of at least $10$ very general points in $\P^2$.  \Cref{conj:homolNag} rephrases this conjecture as an equality of the Waldschmidt constant with the asymptotic regularity.  We ask which varieties $X$ satisfy sub-linear growth for the width of the betti table of $I(X)^{(n)}$.

\begin{question}\label{ques:ExtendingAsymptoticRegularity}
What varieties $X$ can \Cref{conj:homolNag} be extended to?  That is, for what varieties $X$ do we have the equality $\widehat{\alpha}(I(X))=\widehat{\reg}(I(X))$?
\end{question}

Let $\omega(I^{(n)})$ be the largest degree of a generator of $I^{(n)}$.  We always have $\alpha(I^{(n)}) \le \omega(I^{(n)})\le \reg(I^{(n)})$.  If $I=I(X)$ is the ideal of a variety answering \Cref{ques:ExtendingAsymptoticRegularity} positively, then $\omega(I^{(n)})-\alpha(I^{(n)})$ must also grow sub-linearly.  There are many ideals for which it is known that $\reg(I^{(n)})$ differs from $\omega(I^{(n)})$ by a constant independent of $n$ - for instance \textit{star configurations of hypersurfaces}~\cite{Mantero-2020}.  However, in the case of star configurations of hypersurfaces, $\omega(I^{(n)})-\alpha(I^{(n)})$ does \textit{not} grow in a sublinear fashion, hence star configurations of hypersurfaces do not satisfy $\widehat{\alpha}(I)=\widehat{\reg}(I)$.

If $I$ is a monomial ideal, then~\cite{CEHH} shows that $\widehat{\alpha}(I)$ is the minimum sum of the coordinates of a vertex of the symbolic polyhedron of $I$, while~\cite[Theorem~1.3]{DHNT21} shows that $\widehat{\reg}(I)$ is the maximum sum of the coordinates of a vertex of the symbolic polyhedron of $I$.  Thus \Cref{ques:ExtendingAsymptoticRegularity} has a positive answer for a $I$ a monomial ideal precisely when all vertices of the symbolic polyhedron have the same coordinate sum.  More concretely, \Cref{ques:ExtendingAsymptoticRegularity} has a positive answer for any monomial ideal $I=\langle x^{\alpha_1},\ldots,x^{\alpha_n}\rangle$ whose symbolic polyhedron has a unique maximal bounded face (under inclusion) which can be described as both:
\begin{itemize}
\item The convex hull of the vertices of the symbolic polyhedron or
\item The intersection of the symbolic polyhedron with a hyperplane of the form $|\alpha|=c$ for some rational number $c$.
\end{itemize}

For instance, both bullet points are satisfied if $I$ is the edge ideal of a bipartite graph (in this case it is known that the ordinary and symbolic powers coincide~\cite{GVV05}).  More generally both bullet points are satisfied if $I$ is a monomial ideal generated in a single degree and $I^{(n)}=\overline{I^n}$ for all $n\ge 1$ (for squarefree monomial ideals this is also related to the \textit{packing problem}~\cite{DDSGHN2018}).  We are not aware of an algebraic characterization for those monomial ideals which have a symbolic polyhedron whose vertices all have the same coordinate sum.

\begin{rem}
If $I$ is a squarefree monomial ideal which satisfies the two bullet points above, then we can show that $I$ is generated in a single degree and that the number $c$ in the second bullet point above is precisely the generating degree of $I$.  To prove this, we need only show that for a squarefree ideal $I$ there is at least one generator of $I$ whose exponent vector is a vertex of the symbolic polyhedron $\SP(I)$.

Recall that if $I$ is squarefree then there are monomial prime ideals $P_0,\ldots,P_k\subset R=\kk[x_0,\ldots,x_N]$ such that $P_i\not\subset P_j$ for any $1\le i,j\le k$, and $I=P_0\cap\cdots\cap P_k$.  Take a generator of $I$ which has minimal support; re-indexing the variables if necessary we may suppose that $M=x_0\ldots x_t$ is the product of the first $t+1$ variables of $R$.  Since $M$ has minimal support among generators of $I$, the monomial $M_i=M/x_i$ is not in $I$ for any $i=0,\ldots,t$.  Re-ordering the primes $P_0,\ldots,P_k$ if necessary, we may assume that $M_i\notin P_i$ for $i=0,\ldots,t$.  This implies that $P_i$ is generated by $x_i$ and some subset of the variables $\{x_{t+1},\ldots, x_N\}$ for $i=0,\ldots,t$.  Recall that the defining inequalities of $\SP(I)$ are given by $\sum_{x_i\in P_j} a_i \ge 1$ for $j=1,\ldots,k$ and $a_i\ge 0$ for $i=0,\ldots,N$.  Consider the system of equations given by $a_{t+1}=\ldots =a_N=0$ and $\displaystyle \sum_{x_j\in P_i} a_j = 1$, $i=0,\ldots ,t$.  Since $x_i \in P_{i}$ and $P_i$ is generated by a subset of $\{ x_i, x_{t+1}, \ldots ,x_N\}$, this system has a unique solution $a_i = 1$, $i=0,\ldots ,t$ and $a_{t+1}=\ldots =a_N=0$.  This is a vertex of $\SP(I)$ and is clearly the exponent vector of the monomial $M$, completing the proof.
\end{rem}

In this paper we have explored the sequence duality of \Cref{def:overunder} in the context of initial degree and regularity of symbolic powers (and more generally, differentially closed filtrations).  We close with the following invitation to the reader.

\begin{question}
In what other algebraic-geometric contexts do subadditive and superadditive sequences naturally appear?  For each such sequence, is there a meaningful algebraic interpretation for the dual sequence of \Cref{def:overunder}?
\end{question}

\paragraph{\bf Acknowledgements} We thank Anthony Iarrobino for his comments on this work. The third author is partially supported by NSF DMS--2101225.

\bibliographystyle{alpha}
\bibliography{bibl}

\appendix

\section{Formulas involving differentiation and contraction}\label{app:formulas}

In this appendix we collect, for the convenience of the reader, proofs of some of the formulas that we use in~\Cref{sec:InverseSystems}.  Let $R=\kk[x_0,\ldots,x_N]$ be a polynomial ring, $D_R$ the ring of $\kk$-linear differential operators on $R$, $\mathcal{D}$ the divided power algebra on the divided power monomials $Y^{[\mathbf{a}]}$, with $\ba\in\Z^{N+1}_{\ge 0}$, and $S$ the polynomial ring $\kk[y_0,\ldots,y_N]$.  We have the action of $R$ on $\mathcal{D}$ by contraction, written $\contract$, and $R$ on $S$ by partial differentiation, written $\circ$.  First we prove the higher order product rule~\eqref{eq:higherproductrule}.

\begin{lem}\label{lem:HigherProductRule}
Let $f,g\in R$, $i$ be an integer between $0$ and $N$, and $k\ge 1$ an integer.  In characteristic $0$ we have
\[
\frac{\partial^k (fg)}{\partial x_i^k}=\sum_{j=0}^k \binom{k}{j} \frac{\partial^j f}{\partial x_i^j}\frac{\partial^{k-j} g}{\partial x_i^{k-j}}
\]
In arbitrary characteristic we have
\[
D_{k\be_i}(fg)=\sum_{j=0}^k D_{j\be_i}(f)D_{(k-j)\be_i}(g).
\]
\end{lem}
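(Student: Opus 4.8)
The plan is to prove the arbitrary-characteristic identity
\[
D_{k\be_i}(fg)=\sum_{j=0}^k D_{j\be_i}(f)\,D_{(k-j)\be_i}(g)
\]
directly, and then recover the characteristic-zero statement as a formal consequence. First I would note that both sides are $\kk$-bilinear in the pair $(f,g)$, so by linearity it suffices to verify the identity when $f=x^{\ba}$ and $g=x^{\bb}$ are monomials, with $\ba,\bb\in\Z^{N+1}_{\ge 0}$. Next, since the operator $D_{m\be_i}$ alters only the $i$th exponent — explicitly $D_{m\be_i}(x^{\ba})=\binom{a_i}{m}x^{\ba-m\be_i}$, where by the usual convention $\binom{a_i}{m}=0$ whenever $m>a_i$ — the whole computation reduces to a statement about the single exponent $a_i+b_i$ of $x_i$ in the product $x^{\ba}x^{\bb}=x^{\ba+\bb}$.

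Carrying this out, the left-hand side is $\binom{a_i+b_i}{k}\,x^{\ba+\bb-k\be_i}$, while the right-hand side is $\bigl(\sum_{j=0}^k \binom{a_i}{j}\binom{b_i}{k-j}\bigr)\,x^{\ba+\bb-k\be_i}$; these coincide exactly by the Chu--Vandermonde identity
\[
\binom{a_i+b_i}{k}=\sum_{j=0}^k \binom{a_i}{j}\binom{b_i}{k-j}.
\]
The essential point that makes the argument valid in arbitrary characteristic is that Chu--Vandermonde is an identity of \emph{integers}, hence remains valid after reduction modulo any prime $p$; at no stage is anything divided by a factorial. For completeness one can include a one-line proof of Chu--Vandermonde, either by counting the $k$-element subsets of a set of size $a_i+b_i$ according to how many elements they select from a fixed block of size $a_i$, or by an induction on $b_i$ using Pascal's rule.

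For the characteristic-zero statement I would simply use $\frac{\partial^m}{\partial x_i^m}=m!\,D_{m\be_i}$: multiplying the identity above through by $k!$ and writing $\binom{k}{j}=\frac{k!}{j!\,(k-j)!}$ gives the Leibniz formula
\[
\frac{\partial^k(fg)}{\partial x_i^k}=\sum_{j=0}^k \binom{k}{j}\frac{\partial^j f}{\partial x_i^j}\frac{\partial^{k-j} g}{\partial x_i^{k-j}},
\]
so no separate argument is needed; alternatively one may prove this first by an elementary induction on $k$ from the ordinary product rule, after which it is the classical Leibniz rule. I do not expect a real obstacle in this lemma — the content is entirely the Chu--Vandermonde identity plus careful bookkeeping of the convention for $D_{m\be_i}$ on monomials whose exponents would go negative (all absorbed into the vanishing of the relevant binomial coefficients). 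The only subtlety worth flagging explicitly is that one must invoke the integer-level Vandermonde identity rather than any rearrangement that implicitly inverts $p$.
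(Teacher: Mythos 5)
Your proof is correct and follows essentially the same route as the paper's: reduce by bilinearity to monomials, observe that $D_{m\be_i}$ touches only the $i$th exponent, and invoke the Chu--Vandermonde identity $\binom{a_i+b_i}{k}=\sum_j\binom{a_i}{j}\binom{b_i}{k-j}$, with the characteristic-zero form recovered by multiplying through by $k!$. The only cosmetic difference is that the paper first treats $f=x_i^m$, $g=x_i^n$ and then factors out the $x_i$-free parts in a separate step, whereas you handle a general monomial pair in one stroke; your explicit remark that Vandermonde is an integer identity (so survives reduction mod $p$) is a nice clarification that the paper leaves implicit.
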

\begin{proof}
The first formula follows from induction and the ordinary product rule.  It also follows from the second via the identification $D_{\ba}=\frac{1}{\ba!}\frac{\partial^{\ba}}{\partial x^\ba}$, so we prove the second.  We start by proving the formula for $f=x_i^m$ and $g=x_i^n$ where $m,n$ are integers.  Then $D_{k\be_i}(x^{m+n})=\binom{m+n}{k}x_i^{m+n-k}$ and 
\begin{align*}
\sum_{j=0}^k D_{j\be_i}(x_i^m)D_{(k-j)\be_i}(x_i^n) & =\sum_{j=0}^k \binom{m}{j}x_i^{m-j}\binom{n}{k-j}x_i^{n-k+j}\\
&=\left(\sum_{j=0}^k \binom{m}{j}\binom{n}{k-j}\right)x_i^{m+n-k}\\
&=\binom{m+n}{k}x_i^{m+n-k},
\end{align*}
where in the identity above, if either $j>m$ or $k-j>n$, we interpret $x_i^{m-j}=0$ or $x_i^{n-k+j}=0$, respectively.  The binomial coefficients are also interpreted in this way.

Now suppose that $f=x^\ba=x_i^{m}x^{\ba'}$ and $g=x^\bb=x_i^nx^{\bb'}$ for $\ba,\bb\in\Z^{N+1}_{\ge 0}$, where $x^{\ba'}$ and $x^{\bb'}$ are not divisible by $x_i$.  Then $D_{k\be_i}(fg)=x^{\ba'+\bb'}D_{k\be_i}(x_i^{m+n})$ and likewise $D_{j\be_i}(x^\ba)D_{(k-j)\be_i}(x^\bb)=x^{\ba'+\bb'}D_{j\be_i}(x^m_i)D_{(k-j)\be_i}(x^n_i)$ for $j=0,\ldots,k$.  Since the same factor of $x^{\ba'+\bb'}$  pulls out of both sides of the identity, it reduces to what we have already shown.  To get the result where $f$ is an arbitrary polynomial and $g$ is a monomial, we use linearity of the differential operators in $f$.  Finally, to get the full result we use linearity in $g$.
\end{proof}

\begin{lem}\label{lem:prodrulechar0}
Suppose $g\in S$ is a homogeneous polynomial.  Let $F\in R$ be homogeneous of degree $d\geq 1$.  In characteristic $0$, we have
\[
F\circ (y_jg) = \frac{\partial F}{\partial x_j}\circ g + y_j\left(F\circ g\right)
\]
for every $j=0,\ldots,N$.
\end{lem}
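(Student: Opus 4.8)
The plan is to reduce the identity to monomials and then to a one‑variable computation. By $\kk$‑linearity of the action $\circ$ in both arguments, it suffices to treat the case $F=x^{\ba}$ with $|\ba|=d\ge 1$ and $g=y^{\bb}$ for multi‑indices $\ba,\bb\in\Z^{N+1}_{\ge 0}$. Here one uses the explicit formula $x^{\ba}\circ y^{\bb}=\frac{\bb!}{(\bb-\ba)!}y^{\bb-\ba}$ (with the convention of~\eqref{eq:multifactorial} for multi‑index factorials), interpreted as $0$ whenever $\bb\not\ge\ba$.

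First I would expand both sides using this formula. The left side becomes $x^{\ba}\circ y^{\bb+\be_j}=\frac{(\bb+\be_j)!}{(\bb+\be_j-\ba)!}y^{\bb+\be_j-\ba}$, while on the right $\frac{\partial F}{\partial x_j}=a_j\,x^{\ba-\be_j}$ contributes $a_j\frac{\bb!}{(\bb-\ba+\be_j)!}y^{\bb-\ba+\be_j}$ and $y_j(F\circ g)$ contributes $\frac{\bb!}{(\bb-\ba)!}y^{\bb-\ba+\be_j}$. Since each of these multi‑index factorials factors as a product over coordinates and, for every $i\ne j$, the $i$‑th coordinate enters all three expressions through the identical factor $b_i!/(b_i-a_i)!$, the claim collapses — after pulling out this common factor, and noting that all terms vanish anyway if $b_i<a_i$ for some $i\ne j$ — to the single‑variable identity
\[
\frac{(b+1)!}{(b+1-a)!}=a\,\frac{b!}{(b+1-a)!}+\frac{b!}{(b-a)!}
\]
with $a=a_j$, $b=b_j$. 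For $a\le b$ this follows by clearing denominators, where it reads $b+1=a+(b+1-a)$; the boundary cases $a=b+1$ and $a>b+1$ I would dispatch by inspection (in the former the left side and the first summand on the right are both $(b+1)!$ while the last summand is $0$; in the latter all three terms vanish).

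The only real obstacle is the careful bookkeeping of when individual terms vanish, so that the finitely many boundary cases are genuinely covered; there is no substantive difficulty. I would also remark that a more conceptual argument is available: the action of $F$ on $S$ is the constant‑coefficient differential operator $F\!\left(\frac{\partial}{\partial y_0},\dots,\frac{\partial}{\partial y_N}\right)$, and the asserted formula is precisely the Weyl‑algebra commutator identity $\bigl[F(\partial),\,y_j\cdot\bigr]=\bigl(\tfrac{\partial F}{\partial x_j}\bigr)(\partial)$ evaluated at $g$, which in turn follows from $[\partial/\partial y_j,\,y_j\cdot]=\operatorname{id}$ by an induction on $\deg F$.
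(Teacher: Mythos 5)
Your proof is correct but proceeds by a genuinely different route than the paper's.  You reduce, by bilinearity, to the case where \emph{both} $F$ and $g$ are monomials, then unwind the coefficient formula $x^{\ba}\circ y^{\bb}=\frac{\bb!}{(\bb-\ba)!}y^{\bb-\ba}$ coordinate by coordinate; after cancelling the common factors in coordinates $i\ne j$, everything hinges on the single-variable identity $\frac{(b+1)!}{(b+1-a)!}=a\frac{b!}{(b+1-a)!}+\frac{b!}{(b-a)!}$, which you check (including the degenerate cases $a=b+1$ and $a>b+1$, where terms are interpreted as zero).  The paper instead reduces only $F$ to a monomial, keeps $g$ arbitrary, and inducts on the exponent of $x_j$ in $F$: it factors $F=x_jF_0$, applies the ordinary one-variable product rule to $x_j\circ(y_jg)$, invokes the inductive hypothesis on $F_0$, and finishes by recognizing $F_0+x_j\frac{\partial F_0}{\partial x_j}=\frac{\partial F}{\partial x_j}$.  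Your computation is more explicit and self-contained but requires bookkeeping of vanishing boundary cases, which the paper's induction avoids by design (the base case is precisely the case where $\partial F/\partial x_j=0$).  Your closing remark is apt: the statement is the Weyl-algebra commutator identity $[F(\partial),\,y_j\cdot]=(\partial F/\partial x_j)(\partial)$, and the induction you propose to establish it from $[\partial_{y_j},y_j\cdot]=\operatorname{id}$ is essentially the same induction the paper carries out, just phrased operator-theoretically rather than evaluated at $g$.
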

\begin{proof}
Suppose $F$ is a monomial.  We induct on the exponent of $x_j$ in $F$.  First suppose that the exponent of $x_j$ in $F$ is $0$.  In this case, $y_j$ acts as a constant as far as differentiation by $F$ is concerned and thus
$
F\circ(y_jg)=y_j(F\circ g).
$
Since we also have $(\partial F)/(\partial x_j)=0$, this proves the lemma when the exponent of $x_j$ in $F$ is $0$.  Now suppose that the exponent on $x_j$ is positive.  Then we can write $F=x_jF_0$ for some monomial $F_0$.  We have
\begin{equation}\label{eq:first_reduction}
F\circ (y_jg)=(F_0x_j)\circ(y_jg)=F_0\circ (x_j\circ (y_jg))=F_0\circ g+F_0\circ (y_j(x_j\circ g)),
\end{equation}
where the last equality follows from the product rule.  Since the exponent of $x_j$ in $F_0$ is one less than the exponent of $x_j$ in $F$, our induction hypothesis yields
\[
F_0\circ(y_j(x_j\circ g))=\frac{\partial F_0}{\partial x_j}\circ (x_j\circ g)+y_j(F_0\circ (x_j\circ g))=\frac{\partial F_0}{\partial x_j}\circ (x_j\circ g)+y_j(F\circ g).
\]
Substituting this in to the last equality in~\eqref{eq:first_reduction} yields
\begin{align*}
F\circ (y_jg) & =F_0\circ g+F_0\circ (y_j(x_j\circ g))\\
&=F_0\circ g+\frac{\partial F_0}{\partial x_j}\circ (x_j\circ g)+y_j(F\circ g)\\
& =\left(F_0+x_j\frac{\partial F_0}{\partial x_j}\right)\circ g+y_j(F\circ g)\\
& = \frac{\partial F}{\partial x_j}\circ g+y_j(F\circ g).
\end{align*}
This proves the lemma when $F$ is a monomial.  The general result follows from linearity of the derivative.
\end{proof}

\begin{lem}\label{lem:prodruledivided}
Suppose $g\in \mathcal{D}$ is a divided power homogeneous polynomial (that is, all of it divided power monomials have the same degree).  Let $F\in R$ be homogeneous.  In arbitrary characteristic, we have
\[
F\contract (Y_jg) = D_{\be_j}(F)\contract g + Y_j\left(F\contract g\right)
\]
for every $j=0,\ldots,N$.
\end{lem}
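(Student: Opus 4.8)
The plan is to imitate the proof of Lemma~\ref{lem:prodrulechar0} line by line, substituting the contraction action $\contract$ of $R$ on $\mathcal{D}$ for the differentiation action $\circ$ of $R$ on $S$; the statement may also be read as the case $k=1$ of Lemma~\ref{lem:prodrule}, and this is the base case from which the general formula of Lemma~\ref{lem:prodrule} can then be bootstrapped by induction on $k$. Since $\contract$ is $\kk$-bilinear, it suffices to prove the identity when $F=x^{\ba}$ is a monomial of $R$, the general case following by linearity in $F$ (we keep $g$ an arbitrary homogeneous element of $\mathcal{D}$).

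First I would isolate the one-variable case $F=x_j$, that is, the divided product rule
\[
x_j\contract(Y_jh)=h+Y_j\bigl(x_j\contract h\bigr)\qquad\text{for every homogeneous }h\in\mathcal{D},
\]
which is the asserted formula for $F=x_j$ because $D_{\be_j}(x_j)=1$. By linearity one checks it on a divided monomial $h=Y^{[\mathbf{c}]}$: the divided multiplication~\eqref{eq:dividedmultiplication} gives $Y_jY^{[\mathbf{c}]}=(c_j+1)Y^{[\mathbf{c}+\be_j]}$, hence $x_j\contract(Y_jY^{[\mathbf{c}]})=(c_j+1)Y^{[\mathbf{c}]}$; on the other side, if $c_j\ge 1$ then $x_j\contract Y^{[\mathbf{c}]}=Y^{[\mathbf{c}-\be_j]}$ and $Y_j\bigl(x_j\contract Y^{[\mathbf{c}]}\bigr)=c_jY^{[\mathbf{c}]}$, so $h+Y_j(x_j\contract h)=(c_j+1)Y^{[\mathbf{c}]}$, while if $c_j=0$ then $x_j\contract Y^{[\mathbf{c}]}=0$ and $h+Y_j(x_j\contract h)=Y^{[\mathbf{c}]}=(c_j+1)Y^{[\mathbf{c}]}$. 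Either way the two sides coincide.

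Next I would induct on the exponent of $x_j$ in the monomial $F$. If $x_j\nmid F$ then $D_{\be_j}(F)=0$, and a one-line check on divided monomials (the $j$-th slot of the contraction is unaffected by multiplying by $Y_j$) gives $F\contract(Y_jg)=Y_j(F\contract g)$, which is the claim. If $x_j\mid F$, write $F=x_jF_0$ with $F_0$ a monomial of strictly smaller $x_j$-exponent. Using that $\contract$ is an $R$-module action and the divided product rule,
\[
F\contract(Y_jg)=F_0\contract\bigl(x_j\contract(Y_jg)\bigr)=F_0\contract g+F_0\contract\bigl(Y_j(x_j\contract g)\bigr).
\]
Applying the inductive hypothesis to $F_0$ and to the homogeneous element $x_j\contract g$, and then the module property again, the last summand equals $D_{\be_j}(F_0)\contract(x_j\contract g)+Y_j\bigl(F_0\contract(x_j\contract g)\bigr)=(x_jD_{\be_j}(F_0))\contract g+Y_j(F\contract g)$. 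Collecting terms,
\[
F\contract(Y_jg)=\bigl(F_0+x_jD_{\be_j}(F_0)\bigr)\contract g+Y_j(F\contract g),
\]
and the proof concludes upon noting $F_0+x_jD_{\be_j}(F_0)=D_{\be_j}(x_jF_0)=D_{\be_j}(F)$, which is the $k=1$ instance of Lemma~\ref{lem:HigherProductRule}.

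I do not anticipate a real difficulty, since the scaffolding is identical to that of Lemma~\ref{lem:prodrulechar0}. The only step that needs genuine care is the base identity $x_j\contract(Y_jh)=h+Y_j(x_j\contract h)$, where the divided-power multiplication~\eqref{eq:dividedmultiplication} introduces the coefficients $c_j+1$ and $c_j$ that have to cancel exactly; after that, everything is formal manipulation of the $R$-module structure on $\mathcal{D}$ and of the operators $D_{\be_j}$.
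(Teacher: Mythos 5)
Your proof is correct, but it takes a different route from the paper's own proof of this lemma. The paper proves the identity by a direct case analysis: it first verifies the formula for $F=x_j^m$ and $g=Y_j^{[n]}$, splitting into the cases $n\le m-2$, $n=m-1$, and $n\ge m$, and then extends to general monomials $g=Y^{[\bb']}Y_j^{[n]}$ and $F=x^{\ba'}x_j^m$ by pulling the factors $Y^{[\bb']}$ and $x^{\ba'}$ (which do not involve the index $j$) through the contraction. You instead transplant the induction-on-$x_j$-exponent structure of the paper's Lemma~\ref{lem:prodrulechar0} verbatim, grounding everything in the single divided-power base identity $x_j\contract(Y_jh)=h+Y_j(x_j\contract h)$ and then appealing to the $R$-module structure of the contraction action together with the $k=1$ case of Lemma~\ref{lem:HigherProductRule}, namely $D_{\be_j}(x_jF_0)=F_0+x_jD_{\be_j}(F_0)$. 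Your computation of the base identity on a divided monomial $Y^{[\mathbf{c}]}$ is correct, with the coefficients $c_j+1$ and $c_j$ supplied by the divided multiplication~\eqref{eq:dividedmultiplication} cancelling exactly as you say, and this works in all characteristics because both sides reduce the same integer mod $p$. The one thing to note is that your approach transfers the burden of the characteristic-free argument onto Lemma~\ref{lem:HigherProductRule}, which the paper has already established independently, so there is no circularity. What the paper's approach buys is a more elementary, self-contained computation that does not invoke Lemma~\ref{lem:HigherProductRule}; what yours buys is a uniform template that makes the parallel with Lemma~\ref{lem:prodrulechar0} structurally transparent.
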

\begin{proof}
First, we show that the formula holds when $F=x_j^{m}$ and $g=Y_j^{[n]}$. In fact, both sides are $0$ if $n\le m-2$, and if $n=m-1$, $F\contract (Y_jg) =  x_j^{m} \contract ((n+1)Y_j^{[n+1]}) = m=m+0= mx_j^{m-1} \contract (Y_j^{[m-1]})+0 = D_{\be_j}(F)\contract g+Y_j(F\contract g)$. Otherwise, 
\[
F\contract (Y_jg)=(n+1)Y_j^{[n+1-m]}=mx_j^{m-1} \contract (Y_j^{[n]}) + Y_jY_j^{[n-m]}=D_{\be_j}(F)\contract g+Y_j(F\contract g).
\]
Now suppose $F=x_j^m$ and $g=Y^{[\bb]}=Y^{[\bb']}Y_j^{[n]}$, where $Y^{[\bb']}$ is not divisible by $Y_j$.  Then
\begin{multline*}
F\contract (Y_jg)=Y^{[\bb']}(F\contract (Y_jY_j^{[n]}))
\\=Y^{[\bb']}(D_{\be_j}(F)\contract Y_j^{[n]}) + Y^{[\bb']}Y_j(F\contract Y_j^{[n]})=D_{\be_j}(F)\contract Y^{[\bb]}+Y_j(F\contract Y^{[\bb]}),
\end{multline*}
since we have proved the identity for $g=Y_j^{[n]}$ and we can pull $Y^{[\bb']}$ in and out of the contraction with $F$ because $Y^{[\bb']}$ acts like a constant under contraction with $F$.  Now suppose $F=x^{\ba}=x^{\ba'}x_j^{m}$, where $x^{\ba'}$ is not divisible by $x_j$, and $g=Y^{[\bb]}$.  Then
\begin{multline*}
	F\contract (Y_jg)=x^{[\bb']}\contract(x_j^{m}\contract (Y_jg))
	\\=x^{[\bb']}\contract (D_{\be_j}(x_j^{m})\contract g) + x^{\bb'}\contract (Y_j(x_j^{m}\contract g)=D_{\be_j}(x^{\bb})\contract g+Y_j(x^{\bb}\contract g),
\end{multline*}
since we have proved the identity for any divided power monomial $g=Y^{[\bb]}$, contraction is linear, differentiation with respect to $x_j$ commutes with $x^{\bb'}$, and contraction by $x^{\bb'}$ commutes with $Y_j$ because $x^{\bb'}$ is not divisible by $x_j$.  Thus the desired equality holds if $F$ is a monomial (if $m=0$ we interpret $x_j^{m-1}$ as $0$, not $x_j^{-1}$) and $g$ is a monomial. The full result follows from linearity of the contraction.
\end{proof}

\begin{lem}\label{lem:prodrulekp}
Suppose $F\in R$ and $g\in S$ are both homogeneous.  In characteristic $0$,
\[
F\circ (y_j^k g)=\displaystyle \sum_{i=0}^k \binom{k}{i} y_j^{k-i}\left( \frac{\partial^i F}{\partial x_j^i}\circ g\right)
\]
for every $j=0,\ldots,N$ and every $k \in \N$.  If $g\in\mathcal{D}$ is homogeneous then we have, in arbitrary characteristic,
\[
F\contract (Y_j^{[k]}g) = \displaystyle \sum_{i=0}^k Y_j^{[k-i]}(D_{i\mathbf{e_j}}(F)\contract g)
\]
for every $j=0,\ldots,N$ and every $k \in \N$.
\end{lem}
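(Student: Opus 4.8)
The plan is to establish the contraction identity first, in arbitrary characteristic, and then to deduce the characteristic~$0$ differentiation identity from it by transporting along the isomorphism $\Phi:S\to\mathcal{D}$. Both sides of the contraction identity are $\kk$-linear in $F\in R$ and in $g\in\mathcal{D}$, so it suffices to verify it when $F=x^{\ba}$ is a monomial and $g=Y^{[\bb]}$ is a divided power monomial. Writing $\ba=m\be_j+\ba'$ and $\bb=n\be_j+\bb'$, where neither $x^{\ba'}$ nor $Y^{[\bb']}$ involves the $j$th variable, I would argue that $x^{\ba'}$ and $Y^{[\bb']}$ are spectators on both sides: $x^{\ba'}$ commutes with $Y_j^{[k]}$ under contraction and with each $D_{i\be_j}$, contraction by $x^{\ba'}$ touches only the $Y^{[\bb']}$ factor of $g$, and contraction by $x_j^m$ touches only the $Y_j^{[n]}$ factor. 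Peeling these off reduces the identity to the one-variable statement
\[
x_j^m\contract\bigl(Y_j^{[k]}Y_j^{[n]}\bigr)=\sum_{i=0}^k Y_j^{[k-i]}\bigl(D_{i\be_j}(x_j^m)\contract Y_j^{[n]}\bigr).
\]

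I would then verify this one-variable identity by direct computation. Its left-hand side is $\binom{k+n}{k}Y_j^{[k+n-m]}$, while on the right $D_{i\be_j}(x_j^m)=\binom{m}{i}x_j^{m-i}$, $x_j^{m-i}\contract Y_j^{[n]}=Y_j^{[n-m+i]}$, and $Y_j^{[k-i]}Y_j^{[n-m+i]}=\binom{k+n-m}{k-i}Y_j^{[k+n-m]}$, so the right-hand side equals $\bigl(\sum_{i=0}^k\binom{m}{i}\binom{k+n-m}{k-i}\bigr)Y_j^{[k+n-m]}$; equality of the two coefficients is the Vandermonde identity $\sum_i\binom{m}{i}\binom{k+n-m}{k-i}=\binom{k+n}{k}$. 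With the standing conventions that a binomial coefficient vanishes when its lower index is negative or exceeds the (nonnegative) upper one, and that a divided power monomial with any negative exponent is $0$, this single formula covers all the degenerate cases (both sides vanish when $k+n<m$), so no separate case analysis is needed.

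For the characteristic~$0$ differentiation identity I would invoke the ring isomorphism $\Phi:S\to\mathcal{D}$ with $\Phi(y_i)=Y_i$, which satisfies $\Phi(F\circ g)=F\contract\Phi(g)$ for $F\in R$, $g\in S$, together with $\Phi(y_j^k)=Y_j^k=k!\,Y_j^{[k]}$ and $\ba!\,D_{\ba}=\frac{\partial^{\ba}}{\partial x^{\ba}}$ in characteristic $0$. Applying $\Phi$ to $F\circ(y_j^kg)$ gives $k!\,F\contract\bigl(Y_j^{[k]}\Phi(g)\bigr)$, which the contraction identity already proved rewrites as $k!\sum_{i=0}^k Y_j^{[k-i]}\bigl(D_{i\be_j}(F)\contract\Phi(g)\bigr)$; applying $\Phi$ to the proposed right-hand side and using $\Phi\bigl(y_j^{k-i}h\bigr)=(k-i)!\,Y_j^{[k-i]}\Phi(h)$ and $\frac{\partial^iF}{\partial x_j^i}\contract\Phi(g)=i!\,D_{i\be_j}(F)\contract\Phi(g)$ produces the same expression after the cancellation $\binom{k}{i}(k-i)!\,i!=k!$. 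Injectivity of $\Phi$ then gives the identity in $S$. Equivalently, one can run the same spectator reduction directly, landing on the factorial form $\sum_i\binom{k}{i}\frac{m!}{(m-i)!}\frac{n!}{(n-m+i)!}=\frac{(k+n)!}{(k+n-m)!}$ of Vandermonde.

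The only real obstacle is bookkeeping: making the spectator reduction rigorous so that the non-$j$ parts of $F$ and $g$ pull out of both sides simultaneously, and keeping the negative-exponent edge cases aligned with the conventions so that Vandermonde applies verbatim. It is worth noting that a naive induction on $k$, peeling off one $Y_j$ via \Cref{lem:prodruledivided}, does not work cleanly, since $Y_jY_j^{[k-1]}=kY_j^{[k]}$ introduces a factor of $k$ that is not invertible in characteristic dividing $k$; this is exactly why the direct monomial computation is the appropriate tool.
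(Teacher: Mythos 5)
Your proposal is correct and takes essentially the same route as the paper: both reduce by linearity and a spectator argument to the one-variable monomial case $x_j^m\contract(Y_j^{[k]}Y_j^{[n]})$, settle that case via the Vandermonde identity $\sum_i\binom{m}{i}\binom{k+n-m}{k-i}=\binom{k+n}{k}$, and then transport the contraction identity to the characteristic-zero differentiation identity through the $R$-module isomorphism $S\cong\mathcal{D}$ with the factorial cancellation $\binom{k}{i}(k-i)!\,i!=k!$. The only cosmetic difference is that the paper peels off the two spectators $Y^{[\bb']}$ and $x^{\ba'}$ in two separate passes while you do it in one, and your closing remark explaining why a naive induction on $k$ fails in characteristic $p$ (the factor $k$ from $Y_jY_j^{[k-1]}=kY_j^{[k]}$) is a nice addition that the paper only implicitly gestures at by restricting the inductive proof sketch to characteristic $0$.
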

\begin{proof}
In characteristic $0$, both statements can be proven by induction on $k$, where the base case is \Cref{lem:prodrulechar0} for $S$ and \Cref{lem:prodruledivided} for $\mathcal{D}$.  We leave the details to the interested reader.  The strongest statement is the second in arbitrary characteristic, and we prove this one.  (Note that the first statement also follows from the second in characteristic $0$ by the identification $D_{i\mathbf{e_j}}(F)=\frac{1}{i!}\frac{\partial^i F}{\partial x_j^i}$ and the $R$-module isomorphism between $S$ and $\mathcal{D}$.)

We start by proving the second statement when $F=x_j^{m}$ and $g=Y_j^{[n]}$ for non-negative integers $m$ and $n$. On the one hand, we have 
\begin{equation}\label{eq:MonLHS}
F\contract (Y_j^{[k]}g) = {k+n\choose k} x_j^{m} \contract Y_j^{[k+n]} = {k+n\choose k} Y_j^{[k+n-m]}.
\end{equation}
On the other hand, 
\begin{align}\label{eq:MonRHS}
\sum_{i=0}^k Y^{[k-i]}(D_{i\mathbf{e_j}}(F)\contract g)
&= \sum_{i=0}^k Y^{[k-i]} \left( {m \choose i} x_j^{m-i} \circ Y_j^{[n]} \right) \nonumber \\
&= \sum_{i=0}^k {m \choose i} Y^{[k-i]} Y_j^{[n-m+i]} \\
&= \sum_{i=0}^k {m \choose i}{k+n-m \choose k-i} Y_j^{[k+n-m]}. \nonumber
\end{align}
In the above sum, the terms when $i > m$ (corresponding to $D_{i\mathbf{e_j}}(F)$ = 0) or when $k-i>k+n-m$ (corresponding to $n<m-i$, hence $x_j^{m-i} \contract y_j^{[n]}=0$) are $0$. The lemma holds from the combinatorial identity
\[
\sum_{i=0}^k {m \choose i}{k+n-m \choose k-i} = {k+n \choose k}.
\]
Suppose $F=x_j^m$ and $g=Y^{[\bb]}=Y^{[\bb']}Y_j^{[n]}$.  Then the factor $Y^{[\bb']}$ will pull out of~\eqref{eq:MonLHS} and of every summand in~\eqref{eq:MonRHS}.  Thus the result follows from what has been shown.  Now suppose $F=x^{\ba}=x^{\ba'}x_j^{m}$, where $x^{\ba'}$ is not divisible by $x_j$, and $g=Y^{[\bb]}$.  Then $F\contract (Y_j^{[k]})=x^{\ba'}\contract (x_j^{m}\contract (Y_j^{[k]}g))=x^{\ba'}\contract\left(\sum_{i=0}^k Y_j^{[k-i]}(D_{i\mathbf{e_j}}(x_j^m)\contract g)\right)$.  Now
\begin{align*}
x^{\ba'}\contract\left(\sum_{i=0}^k Y_j^{[k-i]}(D_{i\mathbf{e_j}}(x_j^m)\contract g)\right) & =\sum_{i=0}^k x^{\ba'}\contract \left(Y_j^{[k-i]}(D_{i\mathbf{e_j}}(x_j^m)\contract g)\right)\\
&=\sum_{i=0}^k Y_j^{[k-i]}\left(x^{\ba'}\contract(D_{i\mathbf{e_j}}(x_j^m)\contract g)\right)\\
& =\sum_{i=0}^k Y_j^{[k-i]}\left((x^{\ba'}D_{i\mathbf{e_j}}(x_j^m))\contract g\right)\\
& =\sum_{i=0}^k Y_j^{[k-i]}\left(D_{i\mathbf{e_j}}(x^{\ba'}x_j^m)\contract g\right)\\
& =\sum_{i=0}^k Y_j^{[k-i]}\left(D_{i\mathbf{e_j}}(x^{\ba})\contract g\right),
\end{align*}
where the first equality follows by linearity of contraction, the second because $x^{\ba'}$ does not involve the variable $x_j$, the third by the definition of contraction, and the fourth also because $x^{\ba'}$ does not involve the variable $x_j$.  This proves the identity when $F$ and $g$ are monomials.  The identity now follows when $F$ and $g$ are polynomials by linearity.
\end{proof}

\section{The inverse system of powers of the ideal of a point}
Emsalem and Iarrobino show in~\cite{EI95} that
the fundamental computation when finding the inverse system of the symbolic powers of a variety is finding the inverse system of the symbolic powers of the ideal of a single point.  We revisit this computation using~\Cref{lem:prodrulekp}.  Let $p=[b_0:b_1:\ldots:b_N]\in\mathbb{P}^N$ and
\[
\fm_p=\langle b_1x_0-b_0x_1,\ldots,b_Nx_0-b_0x_N\rangle\subset R=\kk[x_0,\ldots,x_N].
\]
be the ideal of homogeneous polynomials vanising on $p$.  We write $L_p=b_0y_0+\ldots+b_Ny_N\in S$ for the dual linear form.  An important observation in~\cite{EI95} is that, if $F\in R$ is homogeneous of degree $d\le k$, then
\begin{equation}\label{eq:char0power}
F\circ L_p^k=\frac{k!}{(k-d)!}L_p^{k-d}F(p),
\end{equation}
where $F(p)$ is the evaluation of $F$ at $p$.

In arbitrary characteristic, we also let $L_p$ denote the dual linear form $b_0Y_0+\cdots+b_NY_N\in\mathcal{D}$, relying on context to differentiate between $L_p\in S$ and $L_p\in\mathcal{D}$.  In $\mathcal{D}$, we define the \textit{divided power} of $L_p$ by
$L_p^{[k]}=\sum_{|\ba|=k} b_0^{a_0}\cdots b_N^{a_N}Y^{[\ba]}.$

The definition of $L_p^{[k]}$ is made precisely so that the analog of~\eqref{eq:char0power} holds.  Namely, if $F\in R$ is homogeneous of degree $d\le k$, then
\begin{equation}
\label{eq:charpPower}
F\contract L_p^{[k]}=L_p^{[k-d]}F(p),
\end{equation}
where again $F(p)$ is the evaluation of $F$ at $p$.  Both~\eqref{eq:char0power} and~\eqref{eq:charpPower} follow from a direct computation.  The following result is shown in~\cite{EI95} (see also~\cite{Ger95}).

\begin{lem}
\label{lem:OnePointInverseSystem}
In characteristic $0$,
\[
(\fm_p^n)^\perp_d=
\left\lbrace
\begin{array}{ll}
S_d & \mbox{if } d<n\\
\langle L_p^{d-n+1} \rangle_{d} & \mbox{if } d\ge n
\end{array}.
\right.
\]
In arbitrary characteristic,
\[
(\fm_p^n)^\perp_d=
\left\lbrace
\begin{array}{ll}
\mathcal{D}_d & \mbox{if } d<n\\
{\rm span}\{ Y^{[\ba]}L_p^{[c]}: d-n+1\le c \le d, |\ba|=d-c \} & \mbox{if } d\ge n
\end{array}.
\right.
\]
\end{lem}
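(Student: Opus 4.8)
The plan is to compute $(\fm_p^n)^\perp_d$ by a change of coordinates together with an explicit description of the annihilator of a power of a single linear ideal. First I would reduce to the case $p=[1:0:\cdots:0]$, so that $\fm_p=\langle x_1,\ldots,x_N\rangle$ and $L_p=y_0$ (respectively $L_p=Y_0$). This reduction is legitimate: a linear change of coordinates on $R$ induces the dual linear change on $S$ (resp. on $\mathcal{D}$), and both the contraction pairing and the differentiation pairing are equivariant under such changes, so $(\fm_p^n)^\perp$ and the claimed right-hand side transform in the same way. Concretely, choose $g\in\GL_{N+1}(\kk)$ with $g(x_0)=b_0x_0+\cdots+b_Nx_N$ (the linear form $L_p$), and note that $g$ carries $\langle x_1,\ldots,x_N\rangle^n$ to $\fm_p^n$ and the dual map carries $y_0$ to $L_p$.

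With $p=[1:0:\cdots:0]$ fixed, I would prove the two displays directly. For $d<n$: every monomial of $R_d$ lies in $\fm_p^d\subseteq\fm_p^n$ only when it is divisible by $x_1\cdots$; more simply, $\fm_p$ does not contain any power of $x_0$, so $x_0^d\notin\fm_p^n$, but this is the wrong direction. The correct statement is that $(\fm_p^n)_d$ is \emph{all} of $R_d$ when $d\geq n$ is false — rather, for $d<n$ we have $\fm_p^n$ has no elements in degree $d$ at all? No: $\fm_p^n$ is generated in degree $n$, so $(\fm_p^n)_d=0$ for $d<n$, whence $(\fm_p^n)^\perp_d=S_d$ (resp. $\mathcal{D}_d$). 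This settles the first case. For $d\ge n$: I claim $(\fm_p^n)_d$ is the span of all monomials $x^{\bb}$ with $\deg x^{\bb}=d$ and $b_1+\cdots+b_N\ge n$, i.e.\ the monomials involving at least $n$ of the variables $x_1,\ldots,x_N$ (counted with multiplicity). Its orthogonal complement under contraction consists of divided monomials $Y^{[\ba]}$ of degree $d$ with $a_1+\cdots+a_N\le n-1$; setting $c=a_0$ we get $c\ge d-n+1$, which is exactly ${\rm span}\{Y^{[\ba]}L_p^{[c]}: d-n+1\le c\le d,\ |\ba|=d-c\}$ once we recall $L_p^{[c]}=Y_0^{[c]}$ in these coordinates and $Y^{[\ba]}Y_0^{[c]}$ spans all divided monomials with $x_0$-exponent $c$. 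The characteristic-zero statement follows identically, using that $(\langle x_1,\ldots,x_N\rangle^n)^\perp_d$ under differentiation is spanned by monomials $y_0^{a_0}\cdots y_N^{a_N}$ with $a_1+\cdots+a_N\le n-1$, and that such monomials of degree $d$ with $a_0\ge d-n+1$ are precisely $\langle y_0^{d-n+1}\rangle_d$.

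Alternatively — and this is the route suggested by the reference to \Cref{lem:prodrulekp} — I would verify the displays without coordinate changes by a containment in both directions. For ``$\supseteq$'': if $d\ge n$ and $F\in\fm_p^n$ is homogeneous of degree $d$, then applying \eqref{eq:charpPower} to any of the $\le d$ factors shows $F\contract(Y^{[\ba]}L_p^{[c]})$ reduces, via \Cref{lem:prodrulekp}, to a sum of terms each containing a factor $G(p)$ with $G\in\fm_p$, hence $G(p)=0$; so the claimed generators are annihilated. For ``$\subseteq$'': a dimension count, since $\dim_\kk{\rm span}\{Y^{[\ba]}L_p^{[c]}: d-n+1\le c\le d,|\ba|=d-c\}$ equals $\sum_{c=d-n+1}^d\binom{N+d-c-1}{N-1}=\dim_\kk(\mathcal D_d)-\dim_\kk(\fm_p^n)_d$, matching $\dim_\kk(\fm_p^n)^\perp_d$ by perfectness of the pairing. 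The main obstacle is the bookkeeping in the second display — keeping the divided-power binomial coefficients straight and confirming the combinatorial identity $\sum_{i}\binom{m}{i}\binom{k+n-m}{k-i}=\binom{k+n}{k}$ (Vandermonde) that \Cref{lem:prodrulekp} already isolates; everything else is a routine unwinding of definitions. I expect the coordinate-change argument to be the cleanest and would present that as the primary proof, relegating the direct computation to a remark.
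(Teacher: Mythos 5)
Your proposal is correct, and your primary route (change of coordinates to reduce to $p=[1:0:\cdots:0]$, then read off the orthogonal complement degree by degree) is genuinely different from the paper's main proof, though it matches exactly the alternative the paper records in the remark immediately following the lemma, citing the $\GL_{N+1}$-equivariance of apolarity from Iarrobino--Kanev. The paper's main proof is instead what you sketch in your ``alternatively'' paragraph: it first does a dimension count to show the claimed span has dimension equal to $\dim(\fm_p^n)^\perp_d$, and then proves the containment ``$\supseteq$'' by an induction on both $n$ and $|\ba|$ using \Cref{lem:prodrulekp} and formula~\eqref{eq:charpPower}, so that after one application of the product rule each term either has a factor $F(p)=0$ or has a differentiated form landing in a smaller symbolic power. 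Both routes are sound; the coordinate-change argument is shorter and conceptually cleaner, but it does require having the equivariance of the contraction pairing available (the paper outsources this to~\cite{Iarrobino-Kanev-1999}), whereas the direct argument is self-contained given \Cref{lem:prodrulekp}. One small slip in your reduction: the element $g\in\GL_{N+1}(\kk)$ you want should send the span of $x_1,\ldots,x_N$ into $\fm_p$ (so that $g$ carries $\langle x_1,\ldots,x_N\rangle^n$ to $\fm_p^n$), and then the contragredient action on $S$ (resp.\ $\mathcal{D}$) automatically sends $y_0$ to a scalar multiple of $L_p$; prescribing $g(x_0)=b_0x_0+\cdots+b_Nx_N$ is neither necessary nor the condition that makes the computation work, so that sentence should be replaced by the correct characterization of $g$. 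With that fix, both directions of your argument go through, and you are also right that in positive characteristic one should check that the span of $\{Y^{[\ba]}L_p^{[c]}\}$ equals the span of all $Y^{[\bb]}$ with $b_0\ge d-n+1$: the potential vanishing of $\binom{a_0+c}{c}$ is harmless because choosing $\ba$ with $a_0=0$ already produces each such $Y^{[\bb]}$ on the nose.
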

\begin{proof}
We prove the formula for the action of $R$ on $\mathcal{D}$.  The case when $d<n$ is clear, so we assume that $d\ge n$.  It is straightforward to show that, when $d\ge n$,
\[
\dim (\fm_p^n)_d=\binom{d+N+1}{N+1}-\binom{n+N}{N+1} \quad \mbox{and hence} \quad
\dim (\fm_p^n)^\perp_d=\binom{n+N}{N+1}.
\]
Examining the terms of $Y^{[\ba]}L_p^{[c]}$, we see that, for some $0\le i\le N$, the divided monomials of the form
\[
\{Y_i^{[c]}Y^{[\ba']}: Y_i \mbox{ does not appear in } Y^{[\ba']}, d-n+1\le c\le d, \mbox{ and } c+|\ba'|=d\}
\]
all appear as a term in some $Y^{[\ba]}L^{[c]}$ on the right hand side.  There are $\binom{n+N}{N+1}$ of these monomials, thus the dimension of the right hand side is at least the dimension of $\dim (\fm_p^n)^\perp_d$.  Thus it suffices to show that $Y^{[\ba]}L_p^{[c]}\in (\fm_p^n)_d^\perp$ for $d-n+1\le c\le d$ and $|\ba|=d-c$.  For this we take a form $F\in (\fm_p^n)_d$ and show that $F\contract (Y^{[\ba]}L_p^{[c]})=0$.

We induct on $n$ and $|\ba|$.  If $n=1$ or $|\ba|=0$ then $c=d$ and $F\contract L_p^{[c]}=F\contract L_p^{[d]}=F(p)$ by~\eqref{eq:charpPower}.  Since $F\in \fm_p, F(p)=0$ and we are done.  Now suppose $n>1$ and $|\ba|>0$.  Then, for some $0\le i\le N+1$, we can write $Y^{[\ba]}=Y_i^{[k]}Y^{[\ba']}$ where $0<k\le d-c$ and $Y_i$ does not appear in $Y^{[\ba']}$.  By~\Cref{lem:prodrulekp},
\begin{equation}
\label{eq:PowerPoint}
F\contract(Y^{[\ba]}L_p^{[c]})=\sum_{j=0}^k Y_i^{[k-j]}(D_{j\be_i}(F)\contract Y^{[\ba']}L_p^{[c]} ).
\end{equation}
Note that if $j=0$ then $D_{0\be_i}(F)=F$ and $F\contract Y^{[\ba']}L_p^{[c]}=0$ by induction on $|\ba|$.  If $1<j\le k$ then $D_{j\be_i}(F)\in \fm_p^{n-j}$ by~\Cref{ex:PowersDiffClosed} and thus $D_{j\be_i}(F)\contract Y^{[\ba']}L_p^{[c]}=0$ by induction on $n$.  So all terms in~\eqref{eq:PowerPoint} vanish and we are done.

An identical strategy can be used to show the formula for $(\fm_p^n)^\perp_d$ for the action of $R$ on $S$; the proof can be simplified a little using~\Cref{lem:prodrulechar0} instead of~\Cref{lem:prodrulekp}.
\end{proof}

\begin{rem}
A different proof of \Cref{lem:OnePointInverseSystem} relies on the $GL_{N+1}$-equivariance of the differentiation and contraction actions (see~\cite[Proposition~A.3]{Iarrobino-Kanev-1999}), under which we may assume that $p=[1:0:\cdots:0]$.
\end{rem}

\end{document}